\numberwithin{equation}{section}
\newtheorem{theorem}{Theorem}[section]
\newtheorem{assumption}[theorem]{Assumption}
\newtheorem{corollary}[theorem]{Corollary}
\newtheorem{definition}[theorem]{Definition}
\newtheorem{lemma}[theorem]{Lemma}
\newtheorem{proposition}[theorem]{Proposition}
\theoremstyle{remark}
\newtheorem{remark}[theorem]{Remark}
\newcommand{\one}{\mathds{1}}
\renewcommand{\div}{\text{div}}
\newcommand{\eps}{\varepsilon}
\newcommand{\E}{\mathbb{E}}
\newcommand{\R}{\mathds{R}}
\newcommand{\N}{\mathds{N}}
\renewcommand{\L}{{\mathcal L}}
\renewcommand{\P}{\mathds{P}}
\newcommand{\M}{\mathcal{M}}
\newcommand{\I}{\mathcal{I}}
\newcommand{\veps}{\varepsilon}
\DeclareMathOperator{\vol}{Vol}
\DeclareMathOperator{\diam}{diam}
\DeclareMathOperator{\Prob}{\mathbb{P}}
\DeclareMathOperator{\Lip}{Lip}
\DeclareMathOperator{\Span}{span}
\newcommand{\red}{\color{red}}
\definecolor{mygreen}{rgb}{0.1,0.75,0.2}
\newcommand{\nc}{\normalcolor}
\newcommand{\applied}[2]{\langle #1,#2\rangle}
\DeclarePairedDelimiter\norm{\lVert}{\rVert}
\DeclarePairedDelimiter\abs{\lvert}{\rvert}
\title[Spectral convergence of graph Laplacians]{Improved spectral convergence rates for graph Laplacians on $\veps$-graphs and $k$-NN graphs}
\author{Jeff Calder \and Nicol\'as Garc\'ia Trillos}
\address[J.~Calder]{School of Mathematics, University of Minnesota}
\email{jcalder@umn.edu}
\address[N.~ Garc\'{i}a Trillos]{Department of Statistics, University of Wisconsin-Madison}
\email{nicolasgarcia@stat.wisc.edu}
\thanks{JC was supported by NSF-DMS grant 1713691. NGT was supported by NSF grant DMS 1912802}
\begin{document}
\maketitle

%
\begin{abstract}
	
In this paper we improve the spectral convergence rates for graph-based approximations of Laplace-Beltrami operators constructed from random data. We utilize regularity of the continuum eigenfunctions and strong pointwise consistency results to prove that spectral convergence rates are the same as the pointwise consistency rates for graph Laplacians. In particular, for an optimal choice of the graph connectivity $\eps$, our results show that the eigenvalues and eigenvectors of the graph Laplacian converge to those of the Laplace-Beltrami operator at a rate of $O(n^{-1/(m+4)})$, up to log factors, where $m$ is the manifold dimension and $n$ is the number of vertices in the graph. \nc Our approach is general and allows us to analyze a large variety of graph constructions that include $\veps$-graphs and $k$-NN graphs. 

	
\end{abstract}

\section{Introduction}

Our work is motivated by applications in machine learning, statistics and artificial intelligence. There, the goal is to learn structure from a given data set $X=\{ x_1, \dots, x_n \}$. To do this several authors have proposed the use of graphs to endow data sets with some geometric structure, and have utilized graph Laplacians to understand how information propagates on the graph representing the data. Graph Laplacians and their spectra form the basis of algorithms for supervised learning  \cite{zhu2003semi,smola2003kernels,Belkin-Niyogi-Sindhwani,Ando}, clustering \cite{ng2002spectral,vonLux_tutorial} and dimensionality reduction~\cite{belkin2002laplacian,Coifman7426}. The works \cite{rosasco2013nonparametric,tibshirani2014adaptive,wang2016trend}  discuss Laplacian regularization  in the context of non-parametric regression. Bayesian approaches to learning where graph Laplacians are used to define covariance matrices for Gaussian priors have been proposed in  \cite{Zhu-2003,kirichenko2017,BertoStuart}.

To better understand algorithms based on graph Laplacians, it has proven useful to study the large sample size asymptotics of graph Laplacians when these capture the closeness of data points in Euclidean space, as is the case in constructions such as $\veps$-graphs or $k$-NN graphs. In this limit, we pass from discrete graph Laplacians to continuum Laplace-Beltrami operators, or weighted versions thereof, and in particular graph Laplacians are seen as specific discretizations of continuum operators. By analyzing the passage to the limit one effectively studies the consistency of algorithms that utilize said operators. In doing so, one gathers information about allowed choice of parameters, and gets insights about computational stability of algorithms (e.g. \cite{TASK,FrancaBamdad1,FrancaBamdad2}). Naturally, in order for the ``passage to the continuum" to imply any sort of consistency for a particular machine learning algorithm, it is important to study the convergence in an appropriate sense. 

Early work on consistency of graph Laplacians focused on pointwise consistency results for $\veps$-graphs (see, for example, \cite{singer2006graph,hein2005graphs,hein2007graph,belkin2005towards,ting2010analysis,GK}). There, as well as hereinafter, the data is assumed to be an \emph{i.i.d.}~sample of size $n$ from a ground truth measure $\mu$ supported on an $m$-dimensional submanifold $\M$ embedded in a high dimensional Euclidean space $\R^d$ (i.e., the \emph{manifold assumption}~\cite{ssl}) and pairs of points that are within distance $\veps$ of each other are given high weights. Pointwise consistency results show that as $n\to \infty$ and the connectivity parameter $\veps \to 0$ (at a slow enough rate), the graph Laplacian applied to a fixed smooth test function converges to a continuum operator, such as Laplace-Beltrami applied to the test function. Recent work is moving beyond pointwise consistency and studying the sequence of solutions to graph-based learning problems and their continuum limits, using tools like $\Gamma$-convergence \cite{slepcev2019analysis,dunlop2019large,trillos2016consistency,calder2019properly}, tools from PDE theory  \cite{calder2018game,calder2019consistency,calder2019properly,RyanNicolas2019,flores2019algorithms,yuan2020continuum} including the maximum principle and viscosity solutions, and more recently random walk and Martingale methods \cite{calder2020rates}. Regarding spectral convergence of graph Laplacians, the regime $n \rightarrow \infty$ and $\veps$ constant was studied in \cite{vLBeBo08}, and 
in \cite{SinWu13} which analyzes connection Laplacians. Works that have studied regimes where $\veps$ is allowed to decay to zero include \cite{trillos2018variational}, \cite{Shi2015}, \cite{BIK}, and \cite{trillos2018spectral}.

The starting point for our work is the paper  \cite{trillos2018spectral} which used ideas from \cite{BIK} in order to obtain what to the best of our knowledge are the state of the art results on spectral convergence of $\veps$-graph Laplacians. These results can be summarized as follows. With very high probability the error of approximation of eigenvalues of a continuum elliptic differential operator by the eigenvalues of their graph Laplacian counterpart scales like 
\[ \veps + \frac{\log(n)^{p_m}}{n^{1/m}\veps}, \]
where $p_m= 1/m$ for $m \geq 3$, $p_2 = 3/4$, and $\eps$ is the length scale for the graph construction. These results suggested that the best rate of convergence is achieved when $\veps$ is chosen to scale like $\sqrt{\frac{\log(n)^{p_m}}{n^{1/m}}}$, in which case the convergence rate for eigenvalues is $O(n^{-1/2m})$, up to log factors. For eigenvectors, the error of approximation in the $L^2$ norm was shown to scale like the square root of the convergence rate of eigenvalues, so $O(n^{-1/4m})$ up to log factors. In this paper, we improve in several regards the results presented in \cite{trillos2018spectral}. Our contributions to the analysis of spectral convergence of graph Laplacians constructed from $\veps$-graphs are summarized as follows:

\begin{enumerate}
\item In the $\veps$-graph setting, we show that the eigenvalues of the graph Laplacian converge (with rates), provided that $\veps$ scales like
\[ \frac{\log(n)^{1/m}}{n^{1/m}} \ll \veps \ll1. \]
This result is valid for all $m \geq 2$. This improves the results in \cite{trillos2018spectral} by removing an additional logarithmic term. In a sense, the lower bound on the allowed values for $\veps$ for the convergence to hold is an optimal requirement due to the connectivity threshold results for random geometric graphs \cite{PenroseBook}. 

\item In the $\veps$-graph case, when $\veps$ scales like
\begin{equation}\label{eq:epsregime}
 C\left( \frac{\log(n)}{n} \right)^{\frac{1}{m+4}} \leq \veps \ll 1,  
\end{equation}
we show that the rate of convergence of eigenvalues coincides with the \textit{pointwise} convergence rates of the graph Laplacian (e.g \cite{hein2007graph}), and in particular with high probability scale linearly in the connectivity length-scale $\veps$. If we choose $\eps = C\left( \frac{\log(n)}{n} \right)^{1/(m+4)}$, then we obtain convergence rates of $O(n^{-1/(m+4)})$, up to log factors, which is sharper than the $O(n^{-1/2m})$ convergence rate from \cite{trillos2018spectral} when $m \geq 5$. 

\item We establish convergence rates for eigenfunctions under $L^2$-type distances that will be made explicit later on. In particular, in the same regime for $\eps$ given in \eqref{eq:epsregime}, we establish that the rate of convergence of eigenvectors scales linearly in $\veps$, matching the convergence rate of eigenvalues as well as the pointwise convergence rates. Thus, choosing again $\eps = C\left( \frac{\log(n)}{n} \right)^{1/(m+4)}$, we obtain convergence rates for eigenvectors of $O(n^{-1/(m+4)})$,  which is far sharper than the $O(n^{-1/4m})$ convergence rates from \cite{trillos2018spectral}.
\end{enumerate}

A second main contribution of our work is to provide spectral consistency results for graph Laplacians constructed from $k$-NN graphs. Our work is the first one to  obtain any rates of convergence in such a setting.  Moreover, in proving the spectral convergence we also obtain rates for pointwise convergence which to the best of our knowledge are also new in the literature. In fact, the only other works in the literature that we are aware of that have rigorously addressed consistency for graph Laplacians associated to $k$-NN graphs are \cite{ting2010analysis} where pointwise convergence is analyzed (without providing any rates), and \cite{DBLP:journalssimodsTrillos19} where asymptotic spectral convergence is discussed, but no rates are provided. In \cite{flores2019algorithms}, pointwise consistency with rates is established for the game-theoretic $p$-Laplacian on $k$-NN graphs.

In practical applications, $k$-NN graphs are almost always preferred over $\eps$-graphs, due to their far better sparsity and connectivity properties (see, e.g.,  \cite{flores2019algorithms,calder2020poisson} for semi-supervised learning, and \cite{zelnik2005self} for spectral clustering). Since the $k$-nearest neighbor relation is \emph{not symmetric}, $k$-NN graphs are normally symmetrized in order to ensure the graph Laplacian is self-adjoint and the spectrum real-valued. On a symmetrized $k$-NN graph, the local neighborhood is no longer longer a Euclidean or geodesic ball, and is in fact not even symmetric. This raises technical difficulties in obtaining pointwise consistency results with rates, and makes the analysis far more involved than it is for $\eps$-graphs.\nc 

Our contributions in this setting are as follows:

\begin{enumerate}
\item We provide spectral convergence rates for graph Laplacians when the graph is a $k$-NN graph, provided $k$ scales like 
\[ \log(n) \ll k \ll n. \]
This result is valid for all $m \geq 2$. Moreover, we show that the rates of convergence coincide with the pointwise convergence rates (see Theorem \ref{thm:sknncon} below) when $k$ scales like
\[ Cn^\frac{4}{m+4}\log(n)^\frac{m}{m+4}\leq  k \ll n.\]

\item We establish convergence rates for eigenvectors under different topologies of interest that will be discussed later on. Moreover, in the regime
\[ Cn^\frac{4}{m+4}\log(n)^\frac{m}{m+4} \ll k \ll n,\] 
the convergence rate for eigenvectors coincides with the convergence rates of eigenvalues and also the pointwise convergence rates from Theorem \ref{thm:sknncon}.
\end{enumerate}

It is worth mentioning that all our estimates hold with high probability for finite (although possibly large) $n$. These results imply a quantitative improvement to a large body of work that has built on previous spectral convergence results.
For example the works \cite{JMLR:v20:19-261,trillos2018spectral,GeoemtricStructure} get directly benefited from our new estimates.

There are two essential steps in our analysis that allow us to improve in several regards the rates presented in \cite{trillos2018spectral} for the $\veps$-graph case. In the first step, we use a simple modification of the construction of \textit{discretization} and \textit{interpolation}  maps introduced in \cite{BIK} (a construction that was later used in \cite{trillos2018spectral}, though cast in the language of optimal transport), in order to prove spectral convergence (with rates) for a wider range of scalings of $\veps$ valid for all dimensions $m \geq 2$. A more detailed outline of the construction of these maps and a discussion on what needs to be adjusted from \cite{trillos2018spectral} is discussed in Section \ref{sec:outlineproofs} below.
 
The second step in our analysis makes use of a simple argument for comparing eigenvalues of different \emph{self-adjoint} operators. To illustrate the idea, let $A,B:H\to H$ be linear operators on a Hilbert space $H$, with $A$ self-adjoint. Let $u$ be an eigenfunction of $A$ with eigenvalue $\lambda_u$, and let $w$ be an eigenfunction of $B$ with eigenvalue $\lambda_w$. We may assume $\|u\|_H=\|w\|_H=1$. Since $A$ is self-adjoint
\[\lambda_u\langle u,w\rangle_H = \langle Au,w\rangle_H = \langle u,Aw\rangle_H = \lambda_w\langle u,w\rangle_H +\langle u,(A-B)w\rangle_H, \]
and thus
\[|\lambda_u-\lambda_w|\leq \frac{\|Aw-Bw\|_H}{|\langle u,w\rangle_H|}.\]
This inequality allows us to convert pointwise estimates on $\|Aw-Bw\|_H$ into estimates on the spectrum, provided $\langle u,w\rangle_H$ is bounded away from zero. For graph Laplacians, $A$ should, say, represent the graph Laplacian, while $B$ represents the continuum Laplace-Beltrami operator (or, more accurately, its restriction to the graph). The key ingredients in our proof are good pointwise estimates, which rely \emph{essentially} on the regularity of the continuum eigenfunctions, and the \emph{a priori} eigenfunction convergence rate from the first step of our analysis, which ensures $\langle u,w\rangle_H$ is bounded away from zero. The bottom line is that our \emph{a priori} (non-optimal) spectral convergence rates can be bootstrapped to make them coincide with the pointwise consistency rates, provided we are willing to shrink the allowed asymptotic scaling for $\veps$ slightly. The consistency of eigenfunctions will be a consequence of the a priori convergence rate for eigenvalues. This is made explicit by following some of the steps in the proof of the classical Davis-Kahan theorem.

Regarding the results for $k$-NN graphs, we first notice that these types of graphs can be thought of intuitively as $\veps$-graphs where one allows $\veps$ to vary in space. Given the inhomogeneity of the natural length scale $\veps$ (and which intuitively is influenced by data density), the first part of our analysis must rely on the definition of new discretization and interpolation maps that are tailored to the inhomogeneous length-scale setting.
After a careful analysis, we are able to provide a priori spectral convergence rates analogous to the a priori rates obtained for $\veps$-graphs. These non-optimal rates can then be bootstrapped to improve them just as in the $\veps$-graph case, using the pointwise consistency results that we derive in Theorem \ref{thm:sknncon}. We note that the pointwise consistency results for graph Laplacians on $k$-NN graphs do not follow directly from viewing the graph as an $\veps$-graph with $\veps$ varying in space. Indeed, looking forward to the proof of Theorem \ref{thm:sknncon}, the local neighborhood on a mutual (or exclusive) $k$-NN graph is asymptotically non-symmetric, due to non-uniformity of the data distribution, and so pointwise consistency for $k$-NN graph Laplacians requires a far more careful analysis than for $\veps$-graphs, where the local neighborhoods are balls. 
\nc


\subsection{Outline} 
The rest of the paper is organized as follows. In Section \ref{Setup} we give the precise set-up used throughout the paper, state our assumptions, and present our main results. Specifically, Section \ref{GraphConstructions} contains the precise definitions of the graph constructions that we study. In Section \ref{sec:eigenvalues} we state our main results regarding convergence of eigenvalues for both $\veps$-graphs as well as $k$-NN graphs, and in Section \ref{sec:eigenvectors} we present the results regarding convergence of eigenvectors. In Section \ref{sec:outlineproofs} we provide an outline of our proofs. In Section \ref{sec:consistency} we present the pointwise consistency results of graph Laplacians which will be needed later on. In Section \ref{sec:mainproofs} we present the proofs of our main results. More specifically, in Section \ref{sec:proofsveps} we present the analysis for the $\veps$-graph case, and in Section \ref{sec:proofsUnk} for $k$-NN graphs. In Section \ref{sec:TL2convergence} we discuss other modes of convergence for eigenvectors, and in particular the $TL^2$-convergence which implies Wasserstein convergence of Laplacian embeddings.

\section{Set-up and main results}
\label{Setup}

Let $\M$ be a compact, connected, orientable, smooth $m$-dimensional manifold embedded in $\R^d$. We give to $\M$ the Riemannian structure induced by the ambient space $\R^d$. With respect to the induced metric tensor, we let $\vol_\M$ be $\M$'s volume form and we let $\mu$ be a probability measure supported on $\M$ with density (with respect to the volume form) $\rho : \M \rightarrow (0,\infty)$ which we assume is bounded, and bounded away from zero, i.e.
\[ 0<\rho_{min} \leq \rho \le \rho_{max}< \infty \]
and is at least $C^{2,\alpha}(\M)$.

Let $X= \{ x_1, \dots, x_n \}$ be a set of i.i.d.~samples from $\mu$, and let $\mu_n$ denote the associated empirical measure, i.e. 
\[\mu_n := \frac{1}{n} \sum_{i=1}^n \delta_{x_i}. \]

In what follows we will use the notation $L^2(\mu)$ to denote the space of $L^2$-functions with respect to the measure $\mu$, and by $L^2(\mu_n)$ the space of functions $u: X \rightarrow \R$. We endow $L^2(\mu)$ with the inner product
\[\langle f , \tilde f \rangle_{L^2(\mu)} = \int_{\M} f(x) \tilde f(x) d\mu(x), \quad f , \tilde f \in L^2(\mu), \] 
and $L^2(\mu_n)$ with the inner product
\[ \langle u , \widetilde u \rangle_{L^2(\mu_n)} := \frac{1}{n}\sum_{i=1}^n u(x_i) \widetilde u(x_i), \quad u, \widetilde u \in L^2(\mu_n). \]

\subsection{Graph constructions}
\label{GraphConstructions}

In this section we define two different graph constructions on $X$ with the purpose of leveraging the geometry of the manifold $\M$.

\subsubsection{\texorpdfstring{$\veps$}{eps}-graphs}

Let $\veps>0$. We construct a weighted graph $G^\veps= ( X, w^\veps)$ as follows. First, we put an edge between $x_i$ and $x_j$ and between $x_j$ and $x_i$ (and write $x_i \sim x_j$)  provided that 
\[ \abs{x_i-x_j} \leq \veps,\]
where in the above, $|x_i- x_j|$ is the \textit{Euclidean} distance between the points $x_i,x_j$. We let $E=\{(i,j) \in \{1,\dots,n\}^2 : x_i\sim x_j  \}$ be the set of such edges. We may endow edges with weights that depend inversely on the distance between the vertices connected by them. For that purpose, let $\eta\colon [0,\infty) \rightarrow [0,\infty)$ be a non-increasing function with support on the interval $[0,1]$ and whose restriction to $[0,1]$ is Lipschitz continuous. 
We can assume without the loss of generality that 
\begin{equation}\label{eq:normalizedkernel}
\int_{\R^m} \eta(\abs{x}) dx =1.
\end{equation}
For convenience we assume that $\eta(1/2)>0$.
We introduce the constant
\begin{equation} \label{def:sigma}
\sigma_\eta \coloneqq \int_{\R^m} \abs{y_1}^2 \eta(\abs{y}) dy,
\end{equation}
where $y_1$ represents the first coordinate of the vector $y\in \R^m$. Notice that a simple computation using radial coordinates shows that when $\eta(t) = \mathds{1}_{[0,1]}(t)$ then $ \sigma_\eta = \frac{\alpha_m}{m+2}$, where $\alpha_m$ is the volume of the $m$-dimensional Euclidean unit ball.

To every  given edge $(i,j) \in E$ we assign the weight $w^\veps_{x_ix_j}$ where
\begin{equation}\label{eqn:weights}
w^\eps_{xy} = \eta\left( \frac{|x - y|}{\eps} \right).
\end{equation}
and we can consider the weighted graph $G^\eps(X,w^\eps)$. In fact, note that if the points $x_i$, $x_j$ are not connected by an edge in $E$ then $w^\veps_{x_ix_j}=0$.   

Having introduced the graph $G^\veps$, we define an associated graph Laplacian operator $\L^\veps$ which for a given $u \in L^2(\mu_n)$ is defined as
\begin{equation}\label{eq:gL}
\L^\eps u(x) = \frac{1}{n\eps^{m+2}}\sum_{j=1}^n w^\eps_{x_jx}(u(x) - u(x_j)).
\end{equation}
Note that in principle $x$ need not be a vertex of the graph to make sense of the above expression, but unless otherwise stated, in the sequel $\L^\veps$ will be thought of as an operator $\L^\veps : L^2(\mu_n) \rightarrow L^2(\mu_n)$.

It is well known in the literature (e.g. \cite{vonLux_tutorial}) that $\L^\veps$ is a positive definite self-adjoint operator with respect to the inner product $\langle  \cdot , \cdot \rangle_{L^2(\mu_n)}$. In particular, we can list its eigenvalues (repeated according to multiplicity) as
\[ 0 \leq \lambda_{1}^\veps \leq \lambda_2^\veps \leq \dots \leq  \lambda_n^\veps ,\]
$\lambda_1^\veps$ always being equal to zero. Also, it is possible to find an orthonormal basis of eigenvectors for $L^2(\mu_n)$. Moreover, a \textit{graph Dirichlet energy} associated to  $\L^\veps$ defined by
\begin{equation}
 b_\veps(u):= \frac{1}{n\eps^{m+2}}\sum_{i,j} w^\eps_{x_ix_j} ( u(x_i) - u(x_j) )^2 = 2 \langle  \L^\veps  u , u \rangle_{L^2(\mu_n)}, \quad u \in L^2(\mu_n).
\end{equation}
can be used to define the eigenvalues of $\L^\veps$ variationally. Namely, the Courant-Fisher minmax principle says that
\begin{equation}
\lambda_l^\veps =  \frac{1}{2}\min_{S \in \mathfrak{S}_l}\max_{u \in S \setminus \{ 0 \}} \frac{b_\veps(u) }{\lVert u \rVert_{L^2(\mu_n)}^2} 
\label{eqn:varcharacveps}
\end{equation}
where in the above $\mathfrak{S}_l$ denotes the set of all linear subspaces of $L^2(\mu_n)$ of dimension $l$.

In this paper we will restrict our attention to the definition of graph Laplacian in \eqref{eq:gL} (known in the literature as unnormalized Laplacian), but we note that there are several  other normalizations that are of high interest in machine learning (e.g. \cite{vonLux_tutorial}), and we expect we can carry out a similar analysis for other normalizations.

\nc

\subsubsection{Undirected $k$-NN graph}

A different graph construction on $X$ proceeds not by fixing a length-scale $\veps$ but rather by specifying for each point in $X$ a set of \textit{nearest neighbors}. 
\begin{definition}
Let $k \in \N$.	We define a relation $\sim_k$ on $X \times X$ by declaring
\[x_i \sim_k x_j , \]
if $x_j$ is among the $k$ nearest neighbors (in the Euclidean distance sense) of $x_i$.
\end{definition}
In this section we symmetrize the relation $\sim_k$ and place an edge between $x_i$ and $x_j$ if $x_i\sim_k x_j$ or $x_j \sim_k x_i$. This is often called the \emph{symmetric} $k$-nearest neighbor (or $k$-NN) graph~\cite{maier2009optimal}. Another popular construction is the \emph{mutual} $k$-NN graph, where we connect $x_i$ to $x_j$ if $x_i\sim_k x_j$ \emph{and} $x_j \sim_k x_i$. While in the sequel we formulate our results for symmetric $k$-NN graphs, they apply with minor modifications to mutual $k$-NN graphs as well.

To construct the $k$-NN graph Laplacian, let 
\begin{equation}\label{eq:Ne}
N_\eps(x) = \sum_{j\, : \,0<|x_j-x|\leq \eps} 1
\end{equation}
be the number of random samples in a punctured Euclidean $\eps$-neighborhood of $x$. Given $1 \leq k\leq n-1$ define
\begin{equation}\label{eq:epsk}
\eps_k(x) := \min \{ \eps > 0 \, : \, N_\eps(x) \geq k\}.
\end{equation}
The value $\eps_k(x)$ is the Euclidean distance from $x$ to the $k^{\rm th}$ nearest neighbor of $x$ from the samples $x_1,\dots,x_n$. Thus, $x_i\sim_k x_j$ if and only if $|x_i-x_j|\leq \eps_k(x_i)$. Finally, we define
\begin{equation}\label{eq:rk}
r_k(x,y): = \max\{\eps_k(x),\eps_k(y)\}.
\end{equation}
Notice that $|x_i-x_j|\leq r_k(x_i,x_j)$ if and only if $x_i$ and $x_j$ are connected by an edge in the symmetric $k$-NN graph. We would obtain the mutual $k$-NN graph by setting $r_k(x,y) = \min\{\eps_k(x),\eps_k(y)\}$.

The undirected $k$-NN graph Laplacian of an element $u \in L^2(\mu_n)$ is defined by
\begin{equation}\label{eq:sknnL}
\L^{k}u(x)= \frac{1}{n}\left( \frac{n\alpha_m}{k} \right)^{1+2/m}\sum_{j=1}^n w_{x_jx}^{r_k(x_j,x)}(u(x) - u(x_j)),
\end{equation}
where $w_{x_jx}^{r_k(x_j, x)}$ has the same meaning as in \eqref{eqn:weights}. As in the $\veps$-graph case, we will assume for the most part that $x$ is one of the elements in $X$ and hence $\L^{k}$ is interpreted as an operator from $L^2(\mu_n)$ into $L^2(\mu_n)$. We list the eigenvalues (repeated according to multiplicity) of $\L^k$ as
\[ 0 \leq \lambda_{1}^k \leq \lambda_2^k \leq \dots \leq  \lambda_n^k ,\]
and define an associated graph Dirichlet energy by 
\begin{equation}
b_k(u):= \frac{1}{n^2} \left( \frac{n\alpha_m}{k} \right)^{1+2/m} \sum_{i,j} w^{r_k(x_i, x_j)}_{x_ix_j} ( u(x_i) - u(x_j) )^2 = 2 \langle  \L^\veps  u , u \rangle_{L^2(\mu_n)}, \quad u \in L^2(\mu_n).
\label{def:bkNN}
\end{equation}
\nc
This functional can be used to define the eigenvalues of $\L^k$ via the variational formula
\begin{equation}
\lambda_l^k = \frac{1}{2} \min_{S \in \mathfrak{S}_l}\max_{u \in S\setminus \{0 \}} \frac{b_k(u) }{\lVert u \rVert_{L^2(\mu_n)}^2}. 
\end{equation}

\begin{remark}
Notice that the rescaling factor $\left(\frac{n \alpha_m}{k}\right)^{1+2/m}$ in \eqref{eq:sknnL} is equal to $1/r^{m+2}$ where $r$ is the radius of an $m$-dimensional Euclidean ball with volume $k/n$. This is the same type of rescaling factor that appears in the definition of the $\eps$-graph Laplacian $\L^\veps$ in \eqref{eq:gL}. 
\end{remark}

\subsection{Convergence of eigenvalues}
\label{sec:eigenvalues}

\subsubsection{$\veps$-graph}

In our first main result we establish error bounds between the eigenvalues $\lambda_{l}^\veps$ of the graph Laplacian $\L^\veps$, and the eigenvalues $\lambda_l$ of a differential operator $\Delta_\rho$ that for smooth functions $f: \M \rightarrow \R$ is defined as
\begin{equation}\label{eq:wlb}
\Delta_\rho f := - \frac{1}{2\rho} \div(\rho^2\nabla f).
\end{equation}
In the above $\div$ stands for the divergence operator on $\M$, and $\nabla$ for the gradient in $\M$. It turns out that $\Delta_\rho$ is a positive semi definite operator with a nice point spectrum so that in particular its eigenvalues can be listed (repeated according to multiplicity) in increasing order as
\[ 0 \leq  \lambda_1 \leq \lambda_2 \leq \dots \]
In addition, each eigenvalue has finite multiplicity, the sequence of eigenvalues grows to infinity, and it is possible to find an orthonormal basis for $L^2(\mu)$ consisting of eigenfunctions, i.e. functions $f$ that satisfy the equation $\Delta_\rho f =\lambda f$.

The eigenvalues of $\Delta_\rho$ can be described variationally in terms of the Dirichlet energy
\[    D_2(f) :=  \begin{cases}  \int_{\M}  \lvert \nabla f (x) \rvert^2 \rho^2(x) d\vol_\M(x),  & \text{ if } f \in H^1(\mu) \\  +\infty, & \text{ otherwise, }  \end{cases} \]   
where in the above $H^1(\mu)$ is the space of functions with a weak gradient in $L^2(\mu)$. Indeed, 
\begin{align}
\label{eqn:varcharaccont}
\lambda_l = \frac{1}{2}\min_{S \in \mathfrak{S}_l} \max_{f\in {S}\setminus\{0\}} \frac{D_2(f)}{\norm{f}^2_{L^2(\mu)}}
\end{align}
where in the above $\mathfrak{S}_l$ is the set of linear subspaces of $L^2(\mu)$ of dimension $l$. The minimum is reached when $S$ is taken to be the span of the first $l$ eigenfunctions of $\Delta_\rho$. We also notice that when $f$ is regular enough (in particular if $ f \in H^2(\mu)$) then $D_2(f)$ coincides with $2\langle \Delta_\rho  f , f \rangle_{L^2(\mu)}$ thanks to the integration by parts formula.

Throughout the paper we will state our probabilistic estimates in terms of parameters $\veps$ (the graph connectivity), $n$ (the number of data points) and two parameters $\theta, \widetilde \delta$ that describe an error of discrete-to-continuum approximation (see Proposition \ref{prop:AuxiliaryDensity} for more details on the meaning of these two parameters). The following are technical ``smallness" assumptions on our parameters that guarantee that we have entered the regime where the statements of our theorems are meaningful.

\begin{assumption}
In the $\veps$-graph setting, for $\theta, \widetilde \delta$ and $\veps$ we assume 
\begin{enumerate}
\item $\veps$ is small enough and in particular satisfies 
\begin{equation}\label{eq:eps}
2\eps < \min\{1,i_0,K^{-1/2},R/2\}=:2\eps_\M.
\end{equation} 
where $i_0, K, R$ are geometric quantities defined in Section \ref{sec:diffgeom}.
\item $\widetilde \delta \leq \frac{1}{4}\veps$.
\item $\widetilde \delta$ is larger than $\frac{1}{n^{1/m}}$.
\item $ C(\theta + \widetilde \delta) \leq \frac{\rho_{min}}{2}$.
\end{enumerate}

In the $k$-NN setting, for $\theta, \widetilde \delta$ and $k$ we assume
\begin{enumerate}
	\item $(k/n)^{1/m}$ is small enough and in particular satisfies 
	\[2(k/n)^{1/m} < C_\rho\min\{1,i_0,K^{-1/2},R/2\}=:2\eps_\M.\]
	where  $C_\rho$ is a constant that depends on the density $\rho$.
	\item $\widetilde \delta \leq c_\rho (k/n)^{1/m} $.
	\item $\widetilde \delta$ is larger than $\frac{1}{n^{1/m}}$.
	\item $ C(\theta + \widetilde \delta) \leq \frac{\rho_{min}}{2}$.

\end{enumerate}
\label{assumptions}
\end{assumption}

\nc

We are ready to state our first main result.

\begin{theorem}[Rate of convergence for eigenvalues]
	\label{thm:evaluesveps}
	Suppose that $\M, \mu$ satisfy the assumptions from Section \ref{Setup}. Suppose that the quantities $\widetilde \delta , \theta, \veps$ satisfy Assumptions \ref{assumptions}. Then:
	\begin{enumerate}
	 \item  For every $l\in \N$, there is a constant $c_l$ such that if
	 \[\sqrt{\lambda_l}  \veps  + C(\theta + \widetilde \delta )  \leq c_l, \]
	 then with probability at least $1- C n \exp(- Cn \theta^2 \widetilde \delta^m)$,   \nc
	\[  |\lambda_l ^\veps - \sigma_\eta\lambda_l| \leq      C\left(\veps (\sqrt{\lambda_l}+1) +  \frac{\widetilde \delta}{\veps}   +\theta    \right) \lambda_l.\] 
	\item Moreover, for every  $l \in \N$ there is a constant $c_l$ such that if 
	\[\left(\veps \sqrt{\lambda_l} +  \frac{\widetilde \delta}{\veps} +\veps   +\theta    \right) \lambda_l\leq c_l\] 
	then, with probability greater than $1-  2Cn\exp\left(-c n\eps^{m+4}\right) - Cn\exp(- cn\theta^2 \widetilde{\delta}^m )$ we have
	\[  \lvert \lambda_l^\veps -\sigma_\eta \lambda_l\rvert \leq C\veps.\] 
		\nc
	\end{enumerate}
%
\end{theorem}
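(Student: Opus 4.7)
The plan is to prove the two parts sequentially, with the a priori bound from part (1) feeding into a bootstrapping argument that sharpens the result to the linear-in-$\veps$ rate in part (2).

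For part (1), I would adapt the discretization and interpolation machinery of \cite{BIK,trillos2018spectral}. The idea is to build a discretization map $P\colon L^2(\mu)\to L^2(\mu_n)$ and an interpolation map $I\colon L^2(\mu_n)\to L^2(\mu)$ that are near-isometries in $L^2$ up to a relative error of order $\theta + \widetilde\delta/\veps$, and that approximately intertwine the two Dirichlet energies: $b_\veps(Pf)\leq \sigma_\eta D_2(f)\bigl(1 + C(\veps\sqrt{\lambda_l} + \widetilde\delta/\veps + \theta)\bigr)$ for continuum eigenfunctions $f$ of eigenvalue $\lambda_l$, with a symmetric bound in the other direction for $I$. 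Feeding the span of the first $l$ continuum eigenfunctions through $P$ into the graph Courant-Fisher principle \eqref{eqn:varcharacveps} yields the upper bound on $\lambda_l^\veps$, and feeding $I$ applied to the span of the first $l$ graph eigenvectors into \eqref{eqn:varcharaccont} yields the matching lower bound on $\sigma_\eta\lambda_l$. The probability term $1 - Cn\exp(-cn\theta^2\widetilde\delta^m)$ arises from requiring a transport-type estimate of scale $\widetilde\delta$ between $\mu_n$ and $\mu$, in the spirit of Proposition \ref{prop:AuxiliaryDensity}.

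For part (2), I would invoke the self-adjoint operator identity outlined in the introduction. Let $f_l$ be a normalized continuum eigenfunction with eigenvalue $\lambda_l$ and $u_l^\veps$ a normalized eigenvector of $\L^\veps$ with eigenvalue $\lambda_l^\veps$. Self-adjointness of $\L^\veps$ on $L^2(\mu_n)$, combined with the exact identity $P(\sigma_\eta\Delta_\rho f_l) = \sigma_\eta\lambda_l\, Pf_l$, gives
\[ |\lambda_l^\veps - \sigma_\eta\lambda_l|\cdot |\langle u_l^\veps, Pf_l\rangle_{L^2(\mu_n)}| \leq \|\L^\veps(Pf_l) - P(\sigma_\eta\Delta_\rho f_l)\|_{L^2(\mu_n)}. \]
The right-hand side is the $L^2(\mu_n)$-pointwise consistency error of the graph Laplacian acting on a smooth function, which should be controlled by $C\veps$ via the results of Section \ref{sec:consistency} together with the $C^{2,\alpha}$ regularity of $f_l$ coming from elliptic regularity of $\Delta_\rho$. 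The denominator is bounded below using the a priori rate of part (1): that rate, combined with a Davis-Kahan type argument applied to the spectral projector onto the eigenspace of $\lambda_l$, implies that $\|u_l^\veps - Pf_l\|_{L^2(\mu_n)}$ is small for a suitable choice of $u_l^\veps$ within its eigenspace, so that $\langle u_l^\veps, Pf_l\rangle_{L^2(\mu_n)}$ is bounded below by a constant depending on $l$ through the spectral gap at $\lambda_l$. The combined probability $1 - 2Cn\exp(-cn\veps^{m+4}) - Cn\exp(-cn\theta^2\widetilde\delta^m)$ is exactly the sum of the pointwise-consistency failure event and the a priori event from part (1).

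The main obstacle I anticipate is establishing the pointwise consistency estimate in the $L^2(\mu_n)$ norm with the stated probability factor $1 - 2Cn\exp(-cn\veps^{m+4})$. The deterministic bias of size $O(\veps)$ follows from a Taylor expansion of $f_l$ against the kernel $\eta$, but the stochastic fluctuation at each $x_i$ has to be sharp enough that a union bound over the $n$ vertices still survives; this forces a Bernstein-type concentration inequality whose variance scale is precisely $n\veps^{m+4}$. A secondary subtlety is the Davis-Kahan step when $\lambda_l$ has multiplicity greater than one, which requires working with spectral projectors onto the joint eigenspace and choosing a representative $f_l$ aligned with $u_l^\veps$; this is standard but has to be threaded carefully so that the constant $c_l$ in the smallness hypothesis absorbs both the spectral gap and the $C^{2,\alpha}$ norm of $f_l$.
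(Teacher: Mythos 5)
Your overall strategy is exactly the paper's: part (1) via discretization/interpolation maps and the Courant--Fisher principle, and part (2) via the self-adjointness trick together with the pointwise consistency of $\L^\veps$ and an a priori eigenvector bound to control the denominator. Your account of the probability budget (the $n\veps^{m+4}$ scale coming from a Bernstein-type bound, union-bounded over the cloud, plus the transport event from part (1)) is also correct.

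However, there is a concrete gap in your part (2) numerator. You apply the self-adjointness identity to $Pf_l$, where $P$ is the averaging discretization map from the BIK/\cite{trillos2018spectral} machinery, and then assert that $\|\L^\veps(Pf_l) - P(\sigma_\eta\Delta_\rho f_l)\|_{L^2(\mu_n)}$ is the pointwise consistency error and hence $O(\veps)$. This is not what the pointwise consistency theorem controls: it controls $\|\L^\veps(f_l|_X) - \sigma_\eta(\Delta_\rho f_l)|_X\|_{L^2(\mu_n)}$, i.e.\ the graph Laplacian applied to the \emph{restriction} of the continuum eigenfunction to the point cloud. The discrepancy between the two is $\L^\veps(Pf_l - f_l|_X)$, and this is not small. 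Indeed $(Pf_l - f_l|_X)(x_i) \approx \nabla f_l(x_i)\cdot b_i$, where $b_i = n\int_{\widetilde U_i}(y-x_i)\widetilde\rho_n(y)\,d\vol_\M(y)$ is a barycenter displacement of size $O(\widetilde\delta)$. The $b_i$'s come from the $\infty$-OT construction and have no spatial regularity, so neighbouring values $b_i - b_j$ are still $O(\widetilde\delta)$, and since $\L^\veps$ has operator norm $O(\veps^{-2})$ the contribution $\|\L^\veps(Pf_l - f_l|_X)\|$ can be as large as $O(\widetilde\delta/\veps^2)$, which is not $O(\veps)$ for the relevant parameter ranges (it even grows with $n$ when $m>4$ at the optimal scalings).

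The fix is simple and is precisely what the paper does: in part (2), work with the restriction $f_l|_X$ rather than with $Pf_l$. The identity $\sigma_\eta\Delta_\rho f_l = \sigma_\eta\lambda_l f_l$ holds as continuum functions, so restricting both sides to $X$ gives $(\sigma_\eta\Delta_\rho f_l)|_X = \sigma_\eta\lambda_l f_l|_X$ exactly, and the self-adjointness argument then yields $|\lambda_l^\veps-\sigma_\eta\lambda_l|\,|\langle u_l^\veps, f_l|_X\rangle_{L^2(\mu_n)}| \leq \|\L^\veps(f_l|_X)-\sigma_\eta(\Delta_\rho f_l)|_X\|_{L^2(\mu_n)}$, whose right-hand side is exactly the pointwise consistency error and is $O(\veps)$ with the claimed probability. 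The denominator is then bounded below by the eigenvector convergence rate $\|u_l^\veps - f_l|_X\|_{L^2(\mu_n)}\leq C\veps$ (which is the paper's Theorem \ref{thm:evectorsveps} part (2), itself obtained via the Davis--Kahan-type spectral projector argument you describe), giving $|\langle u_l^\veps,f_l|_X\rangle|\geq 1-C\veps\geq 1/2$. So the statement is right but your choice of test vector in the numerator would derail the argument.
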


\nc

Let us pause for a moment and discuss the content of Theorem \ref{thm:evaluesveps}. A consequence of the first part of the theorem is that as long as $\veps \gg  \left(\frac{\log(n)}{n}\right)^{1/m}$, then it is possible to pick $\widetilde \delta $ and $\theta$ according to $\theta = \sqrt{\frac{(\alpha+1) \log(n)}{Cn\widetilde \delta^m} }$ for some large enough $\alpha$ in such a way that as $n$ grows, both $\theta$ and $\widetilde\delta $ converge to zero, and with probability one
\[ |\lambda_l^\veps - \sigma_\eta \lambda_l  | \leq C(\widetilde \delta + \theta) \rightarrow 0.\]
That is, we can make the error of approximation of eigenvalues converge to zero as $n \rightarrow \infty$ provided that $\veps\gg \left(\frac{\log(n)}{n}\right)^{1/m}$. This result is valid for all $m \geq 2$, removing in this way an extra logarithmic factor that was present in the results from \cite{trillos2018spectral} when the dimension was $m=2$. Of course, the first part of the theorem gives more than just asymptotic convergence and we do get rates. We would like to note that the removal of this logarithmic factor has been done recently in \cite{MullerPenrose} and \cite{MShaCCS} for other, related, variational problems on graphs. Here we remove the extra logarithmic factors while obtaining rates of convergence. Notice, however, that our estimates would predict that the rates of convergence become quite slow as we get close to the scaling $\left(\frac{\log(n)}{n} \right)^{1/m}$, which is the connectivity threshold of the graph. The rates of convergence degrade as we approach the connectivity threshold because the graph is irregular on the smallest length scale of nearest neighbors. However, we expect that on a larger macroscopic scale, the irregularities will average out, and it may be possible to extend the linear $O(\eps)$ rate to the range $\left(\frac{\log(n)}{n} \right)^{1/m} \ll \eps \ll \left(\frac{\log(n)}{n} \right)^{1/(m+4)}$. We expect this to require deeper tools and techniques from the theory of stochastic homogenization, and leave this to future work. \nc

The second part of the theorem in particular implies that if $\veps$ is larger than $\left( \frac{\log(n)}{n} \right)^{\frac{1}{m+4}}$, then with high probability the rate of convergence of eigenvalues and eigenvectors of $\L^\veps$ towards those of $\Delta_\rho$ is linear in $\veps$. We believe that for eigenvectors the linear convergence rate is the best possible for a generic manifold $\M$. In numerical analysis of PDEs, one normally sees second order $O(\eps^2)$ convergence rates for the spectrum of the Laplacian on a uniform grid in Euclidean space, due to using \emph{symmetric} stencils for the Laplacian (see, e.g., \cite{leveque2007finite}), where first order error terms exactly cancel out, and the lack of discretization errors due to the curvature of the manifold. In our manifold setting, the curvature of the manifold arises in the first order error terms and prevents the scheme from attaining second order accuracy. We also note that the numerical scheme provided by the graph Laplacian \eqref{eq:gL} is not a symmetric stencil (i.e., the neighbors of $x$ are not of the form $x+v$ and $x-v$), and so the first order terms cancel out only on average, after random fluctuations are accounted for, which necessitates the larger length scale $\eps \gg\left(\frac{\log(n)}{n} \right)^{1/(m+2)}$ for pointwise consistency. In the flat Euclidean setting, it may be possible to improve convergence rates to $O(\eps^2)$ with a more severe length scale restriction $\eps \gg\left(\frac{\log(n)}{n} \right)^{1/(m+6)}$.  Regarding convergence rates for eigenvalues, we actually believe that they may be faster than the convergence rates for eigenfunctions. This is because eigenvalues may be defined by computing the Dirichlet energy of a normalized eigenvector of the graph Laplacian, and thus is computed by taking a double sum which in principle may homogenize better. This is by no means a proof and only reveals a simple intuition.

Finally, we would like to remark that we can trace the dependence of all the constants appearing in Theorem \ref{thm:evaluesveps} on the different parameters of the problem, but we have decided not state them explicitly to immensely facilitate our presentation.

\nc

\subsubsection{$k$-NN graph}

In the $k$-NN graph case we show that the eigenvalues of $\L^k$ converge towards the eigenvalues of the operator $\Delta_\rho^{NN}$ defined for smooth functions $f : \M \rightarrow \R$ according to
\begin{equation}\label{eq:Dp3}
\Delta_{\rho}^{NN} f :=- \frac{1}{2\rho}\text{div}(\rho^{1-2/m}\nabla f).
\end{equation}
 This operator has similar spectral properties as $\Delta_\rho$. In particular, if we list its eigenvalues as 
\[ 0 \leq \lambda_1 \leq \lambda_2\leq \dots ,\]
they can be written as
\[\lambda_l =\frac{1}{2} \min_{S \in \mathfrak{S}_l} \max_{f\in {S}\setminus\{0\}} \frac{D_{1-2/m}(f)}{\norm{f}^2_{L^2(\mu)}}\]
where now the associated Dirichlet energy $D_{1-2/m}$ takes the form
\begin{equation}
D_{1-2/m}(f) := \begin{cases} \int_\M \lvert \nabla f(x) \rvert^2 \rho^{1-2/m}(x) d\vol_\M(x) & \text{ if } f \in H^1(\mu)\\
+\infty & \text{ otherwise. } 
\end{cases}
\label{WeightedDirichlet2}
\end{equation}

Following a remark in \cite{DBLP:journalssimodsTrillos19}, it is possible to show that the spectrum of $\L^{k}$ converges as $n \rightarrow \infty$ towards the spectrum of $\Delta_\rho^{NN}$ provided that $k$ scales like
\[ (\log(n))^{m p_m} \ll k \ll n. \]
No rates are provided in \cite{trillos2018variational} as the convergence is deduced using the notion of $\Gamma$-convergence.  Besides enlarging the set of admissible values of $k$ for which we can prove the spectral convergence, we establish the following convergence rates. They are analogous to the ones stated in Theorem \ref{thm:evaluesveps} for $\veps$-graphs.
 
\begin{theorem}[Rate of convergence for eigenvalues]
	\label{thm:evaluesks}
	Suppose that $\M, \mu$ satisfy the assumptions from Section \ref{Setup}. Suppose that the quantities $\widetilde \delta , \theta, k$ satisfy Assumptions \ref{assumptions}. Then:
	\begin{enumerate}
		\item For every $l\in \N$, there is a constant $c_l$ such that if
		\[\sqrt{\lambda_l} (k/n)^{1/m}  + C(\theta + \widetilde \delta )  \leq c_l, \] 
		then, with probability at least $1- C n \exp(- Cn \theta^2 \widetilde \delta^m)$, we have
		\[  |\lambda_l ^k - \sigma_\eta\lambda_l| \leq   +   C \left (\left(\frac{k}{n} \right)^{1/m} (\sqrt{\lambda_l}+1) +  \widetilde \delta\left(\frac{n}{k}\right)^{1/m}   +\theta  \right  ) \lambda_l.\] 
		\item Moreover, for every  $l \in \N$ there is a constant $c_l$ such that if 
		\[\left(\left(\frac{k}{n}\right)^{1/m} \sqrt{\lambda_l} +  \frac{\widetilde \delta}{\veps}\left(\frac{n}{k}\right)^{1/m}   +\theta   \right ) \lambda_l\leq   c_l\] 
		then, with probability greater than $1- Cn\exp\left( -c(k/n)^{4/m}k\right) - Cn\exp(- Cn\theta^2 \widetilde{\delta}^m )$ we have
		\[  \lvert \lambda_l^k -\sigma_\eta\lambda_l\rvert \leq C(k/n)^{1/m}.\] 
		\nc
	\end{enumerate}
\end{theorem}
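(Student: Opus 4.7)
The plan is to mirror the two-stage argument used for Theorem \ref{thm:evaluesveps}, with the fixed length scale $\veps$ replaced by the inhomogeneous $k$-NN radius $\veps_k(x)\approx (k/(n\alpha_m\rho(x)))^{1/m}$. Part (1) will be an a priori variational comparison via discretization and interpolation maps, and part (2) will be obtained by applying the self-adjoint bootstrap from the introduction together with the pointwise consistency result Theorem \ref{thm:sknncon}.

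\textbf{Stage 1 (a priori rate).} To prove part (1) I would adapt the Burago--Ivanov--Kurylev-type construction from the $\veps$-graph analysis, building discretization and interpolation maps $P\colon L^2(\mu)\to L^2(\mu_n)$ and $I\colon L^2(\mu_n)\to L^2(\mu)$ whose local averaging radius varies in space as $\veps_k(x)$ (rather than being constant). Feeding these maps into the Courant--Fisher characterizations of $\lambda_l$ and $\lambda_l^k$ reduces the problem to two-sided Dirichlet energy comparisons $b_k(Pf)\le (1+\text{err})\sigma_\eta D_{1-2/m}(f)$ and $\sigma_\eta D_{1-2/m}(Iu)\le (1+\text{err})b_k(u)$, together with matching $L^2$ norm comparisons. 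The weighting $\rho^{1-2/m}$ will emerge from the interplay between the rescaling factor $(n\alpha_m/k)^{1+2/m}$ in $b_k$ and the density's effect on $\veps_k(x)^m\approx k/(n\alpha_m\rho(x))$: a change of variables $y=x+\veps_k(x)z$ produces a factor $\veps_k(x)^{m+2}\rho(x)^2$, which combined with $(n\alpha_m/k)^{1+2/m}$ yields precisely $\rho(x)^{1-2/m}$. All relevant errors are controlled via Proposition \ref{prop:AuxiliaryDensity} on an event of probability at least $1-Cn\exp(-Cn\theta^2\widetilde\delta^m)$; the factor $\widetilde\delta(n/k)^{1/m}$ in the bound appears because the transport displacement $\widetilde\delta$ must be measured against the intrinsic scale $(k/n)^{1/m}$ rather than $\veps$.

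\textbf{Stage 2 (bootstrap).} For part (2) the plan is to apply the self-adjoint operator trick from the introduction. Let $u$ be an $L^2(\mu_n)$-normalized eigenvector of $\L^k$ associated to $\lambda_l^k$, and let $w$ be an $L^2(\mu)$-normalized eigenfunction of $\Delta_\rho^{NN}$ with eigenvalue $\lambda_l$; viewing $w$ also as an element of $L^2(\mu_n)$ by restriction, the inequality
\[|\lambda_l^k-\sigma_\eta\lambda_l|\le \frac{\|\L^k w - \sigma_\eta\Delta_\rho^{NN} w\|_{L^2(\mu_n)}}{|\langle u,w\rangle_{L^2(\mu_n)}|}\]
reduces the eigenvalue error to a pointwise bound. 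The numerator will be controlled by Theorem \ref{thm:sknncon} applied at each $x_i$ (with a union bound over the $n$ sample points), giving the $O((k/n)^{1/m})$ rate on a failure event of probability at most $Cn\exp(-c(k/n)^{4/m}k)$. The denominator will be bounded away from zero by invoking the a priori eigenvector convergence provided by Stage 1 (this is precisely the role of the hypothesis in (2), which ensures that the Stage 1 estimate is informative enough for $u$ to resolve $w$); if $\lambda_l$ is degenerate, choose $w$ inside the corresponding eigenspace so as to maximize $|\langle u,w\rangle_{L^2(\mu_n)}|$, in the spirit of Davis--Kahan.

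\textbf{Main obstacle.} The principal new difficulty compared to the $\veps$-graph setting is controlling the asymmetric, density-dependent radius $r_k(x,y)=\max\{\veps_k(x),\veps_k(y)\}$ uniformly in $(x,y)\in\M\times\M$. A quantitative, uniform version of $\veps_k(x)^m\approx k/(n\alpha_m\rho(x))$ must be propagated cleanly through both the construction of the maps $P,I$ in Stage 1 and the pointwise expansion behind Stage 2. The presence of $\max$ (rather than a smooth function of $\veps_k(x)$ and $\veps_k(y)$) prevents naive symmetric-stencil cancellations and is the real source of the technical subtlety flagged in the introduction; this is also why the pointwise consistency result Theorem \ref{thm:sknncon} requires a substantially more delicate analysis than in the $\veps$-graph case. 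Once the uniform density estimates and Theorem \ref{thm:sknncon} are available, the variational comparisons of Stage 1 and the bootstrap of Stage 2 carry over from the proof of Theorem \ref{thm:evaluesveps} with only cosmetic changes.
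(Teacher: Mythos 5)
Your two-stage plan matches the paper's strategy: the paper proves Propositions \ref{prop:localnonlocalkNN} and \ref{prop:almostisometrieskNN} (the $k$-NN analogues of the Dirichlet-energy comparison and almost-isometry propositions) and then says explicitly that the proof of Theorem \ref{thm:evaluesks} proceeds ``exactly as in the $\veps$-setting,'' i.e., Courant--Fisher for part (1), and the self-adjoint $\langle u,(A-B)w\rangle$ trick combined with the pointwise consistency from Theorem \ref{thm:sknncon} for part (2). Your bookkeeping for the limiting weight $\rho^{1-2/m}$ (two density factors from the bilinear form combined with $\veps_k(x)^{m+2}(n\alpha_m/k)^{1+2/m}\approx\rho(x)^{-(1+2/m)}$) is correct, as is the failure probability $Cn\exp(-c(k/n)^{4/m}k)$ obtained by plugging $\delta\sim(k/n)^{1/m}$ into Theorem \ref{thm:sknncon}.

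One place where you gloss over a real step: you say the denominator $|\langle u,w\rangle_{L^2(\mu_n)}|$ is bounded away from zero ``by the a priori eigenvector convergence provided by Stage 1,'' but Stage 1 as you have described it (the variational energy comparison) produces the a priori \emph{eigenvalue} rate directly; extracting the eigenvector bound requires an additional argument. In the paper this is Theorem \ref{thm:evectorsks}, whose part (2) (the linear rate, which the paper actually invokes in the eigenvalue bootstrap) requires the spectral-gap isolation argument in the proof of Theorem \ref{thm:evectorsveps} (2), \emph{plus} the pointwise consistency again — it is not a free consequence of the variational estimates. The weaker part (1) bound (square-root rate) would also suffice to show $|\langle u,w\rangle|\geq 1/2$ and does follow from the Stage-1 propositions plus the a priori eigenvalue rate, so the overall logic can be repaired along the lines you gesture at, but an explicit eigenvector lemma needs to sit between your two stages rather than being implicit in Stage 1. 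Similarly, the uniform two-sided concentration of $\veps_k(x_i)$ around the deterministic $\veps(x)$ of \eqref{eq:epsx} (the paper's Lemma \ref{lemma:evpskeps}) is not a direct consequence of Proposition \ref{prop:AuxiliaryDensity} alone; it is a separate uniform estimate that must be proved and is the workhorse behind controlling $r_k(x,y)=\max\{\eps_k(x),\eps_k(y)\}$ in the Dirichlet-energy comparison.
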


\nc

\subsection{Convergence of eigenvectors}

\label{sec:eigenvectors}

We also establish convergence rates for the eigenvectors of $\L^\veps$ towards eigenfunctions of $\Delta_\rho$, as well as convergence rates of eigenvectors of $\L^k$ towards eigenfunctions of $\Delta_\rho^{NN}$. Naturally, we first need to specify the sense in which the convergence is to take place. To describe this, let us first notice that the eigenfunctions of both $\Delta_\rho$ and $\Delta_\rho^{NN}$ are continuous functions, so that pointwise evaluation of eigenfunctions makes perfect sense. In particular, we can restrict a given eigenfunction $f$ to $X$ and the quantity $\lVert u- f \rVert^2_{L^2(\mu_n)}$ would be well defined for every $u \in L^2(\mu_n)$. Furthermore, if $f$ solves the elliptic equation 
\[ \Delta_\rho f = \lambda f \]
for some $\lambda>0$, then, by the regularity theory of elliptic operators \cite{gilbarg2015elliptic} (which can be lifted from the Euclidean setting to the curved manifold setting given the assumptions that we made on $\M$ at the beginning of Section \ref{Setup}) and the regularity  assumption on the density $\rho\in C^2(\M)$, it follows that $f$ is at least a $C^3(\M)$ function. That is, its first, second and third derivatives are continuous, and given the compactness of $\M$ also bounded. The same regularity holds for eigenfunctions of $\Delta_\rho^{NN}$. Said regularity will be needed in order to apply the pointwise consistency results from Theorem \ref{thm:con1} (for the $\veps$-graph case) and Theorem \ref{thm:sknncon} (for the $k$-NN case) to a given eigenfunction $f$.

\nc

We have the following convergence result for $\veps$-graphs.

\begin{theorem}[Rate of convergence for eigenvectors]
	\label{thm:evectorsveps}
	
	Suppose that $\M, \mu$ satisfy the assumptions from Section \ref{Setup}. Suppose that the quantities $\widetilde \delta , \theta, \veps$ satisfy Assumptions \ref{assumptions}. Then:

	\begin{enumerate}
	
	\item  For every  $l \in \N$ there is a constant $c_l$ such that if 
	\[\left(\veps \sqrt{\lambda_l} +  \frac{\widetilde \delta}{\veps} +\veps   +\theta    \right) \lambda_l\leq c_l,\]
	then with probability at least $1-Cn \exp(-Cn\theta^2 \widetilde \delta^m)$, for every $u_l$ normalized eigenvector of $\L^\veps$ with eigenvalue $\lambda_l^\veps$, there is a normalized eigenfunction $f_l$ of $\Delta_\rho$ with eigenvalue $\lambda_l$ such that
	\[\lVert u_l - f_l \rVert_{L^2(\mu_n)}\leq  \left(C_l\left(  \frac{\widetilde \delta}{\veps} + \veps + \theta \right) \right)^{1/2} + C_l \widetilde \delta .\]
	
	\item For every  $l \in \N$ there is a constant $c_l$ such that if 
	\[\left(\veps \sqrt{\lambda_l} +  \frac{\widetilde \delta}{\veps} +\veps   +\theta    \right) \lambda_l\leq c_l,\]
	then with probability at least $1- Cn \exp(-C n \theta^2 \widetilde {\delta}^m) - Cn\exp\left(-C n\eps^{m+4}\right) $, for every $u_l$ normalized eigenvector of $\L^\veps$ with eigenvalue $\lambda_l^\veps$, there is a normalized eigenfunction $f_l$ of $\Delta_\rho$ with eigenvalue $\lambda_l$ such that 
	\[  \lVert u_l - f_l \rVert_{L^2(\mu_n)} \leq C_l\veps. \]
	\end{enumerate}
	
\end{theorem}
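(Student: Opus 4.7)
The plan is to establish Part (1) directly from the variational construction used to prove Theorem \ref{thm:evaluesveps}(1), and then obtain the sharp bound in Part (2) by a bootstrap argument that combines the a priori rate from Part (1) with the pointwise consistency estimate of Theorem \ref{thm:con1}, exactly as sketched in Section \ref{sec:outlineproofs}.

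For Part (1), given a normalized graph eigenvector $u_l$ of $\L^\veps$, I would use the interpolation map $I:L^2(\mu_n)\to L^2(\mu)$ constructed for Theorem \ref{thm:evaluesveps}(1) (a modification of the BIK discretization/interpolation maps) to produce $Iu_l\in L^2(\mu)$ satisfying the two key estimates $\|Iu_l\|_{L^2(\mu)}=1+O(\widetilde\delta)$ and $D_2(Iu_l)\leq b_\veps(u_l)\bigl(1+O(\widetilde\delta/\veps+\veps+\theta)\bigr)=2\sigma_\eta\lambda_l^\veps\bigl(1+O(\widetilde\delta/\veps+\veps+\theta)\bigr)$. Expanding $Iu_l=\sum_j \alpha_j f_j$ in the continuum orthonormal eigenbasis of $\Delta_\rho$, the identity $\sum_j \alpha_j^2 \lambda_j=\tfrac12 D_2(Iu_l)$, combined with the spectral gap of $\Delta_\rho$ around $\lambda_l$, forces $\sum_{\lambda_j\neq\lambda_l}\alpha_j^2\leq C_l(\widetilde\delta/\veps+\veps+\theta)$. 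Choosing $f_l$ to be the normalized projection of $Iu_l$ onto the $\lambda_l$-eigenspace (with the sign that maximizes $\langle Iu_l,f_l\rangle$) gives the $L^2(\mu)$-bound, and composing with the corresponding discretization map $P$, whose error on smooth functions is of order $\widetilde\delta$, transfers the estimate to $L^2(\mu_n)$ and yields the rate $(\widetilde\delta/\veps+\veps+\theta)^{1/2}+\widetilde\delta$ stated in (1).

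For Part (2) I bootstrap as follows. Let $f_l$ denote the eigenfunction delivered by Part (1); elliptic regularity gives $f_l\in C^3(\M)$, so Theorem \ref{thm:con1} applied to $f_l$ produces $\|\L^\veps f_l-\sigma_\eta\lambda_l f_l\|_{L^2(\mu_n)}\leq C_l\veps$ on the event of probability at least $1-Cn\exp(-cn\veps^{m+4})$. Expanding $f_l|_X=\sum_j \alpha_j u_j$ in a complete orthonormal eigenbasis of $\L^\veps$, Parseval yields $\sum_j (\lambda_j^\veps-\sigma_\eta\lambda_l)^2\alpha_j^2\leq C_l^2\veps^2$. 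By Theorem \ref{thm:evaluesveps}(2), each $|\lambda_j^\veps-\sigma_\eta\lambda_j|\leq C\veps$, so for indices $j$ with $\lambda_j\neq\lambda_l$ the factor $|\lambda_j^\veps-\sigma_\eta\lambda_l|$ is bounded below by half the continuum spectral gap around $\lambda_l$, forcing $\sum_{\lambda_j\neq\lambda_l}\alpha_j^2\leq C_l\veps^2$. The a priori bound from Part (1) ensures that $|\langle u_l,f_l\rangle_{L^2(\mu_n)}|$ is close to $1$, so a sign of $f_l$ can be chosen that identifies $u_l$ as the dominant contribution inside the block of graph eigenvectors with graph eigenvalues near $\sigma_\eta\lambda_l$. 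Using $\|f_l|_X\|_{L^2(\mu_n)}=1+o(1)$ (from the law of large numbers applied to the bounded continuous function $f_l^2$, or from the concentration estimates already used earlier in the paper) then converts the block-orthogonality estimate into $\|u_l-f_l\|_{L^2(\mu_n)}\leq C_l\veps$.

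The main obstacle in both parts is the careful handling of eigenvalues $\lambda_l$ with multiplicity greater than one, together with the bookkeeping needed to match a given graph eigenvector $u_l$ with a continuum eigenfunction $f_l$ selected \emph{depending on $u_l$}. The spectral gap of $\Delta_\rho$ at $\lambda_l$ is absorbed into the $l$-dependent constants $c_l$ and $C_l$, which is why the smallness conditions in the hypotheses are $l$-dependent. A secondary subtlety is that the $C^3$-norms of $f_l$ entering Theorem \ref{thm:con1} must be controlled in terms of $l$ via elliptic regularity estimates for $\Delta_\rho$, and this growth also gets absorbed into $C_l$.
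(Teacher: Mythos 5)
Your Part (1) argument runs in the opposite direction to the paper's: you interpolate the graph eigenvector $u_l$ to the continuum via $\widetilde\I$, expand $\widetilde\I u_l$ in the continuum eigenbasis, and use the Dirichlet-energy bound there to localize the coefficients; the paper instead starts from a continuum eigenfunction $f$, discretizes it via $\widetilde P$, and runs the spectral-gap argument in $L^2(\mu_n)$, bounding $\lVert \widetilde P f - P_S \widetilde P f\rVert_{L^2(\mu_n)}$ directly (see \eqref{ineq:eigenfuncAux}). Both routes rest on the same pair of ingredients (Propositions \ref{prop:localnonlocal}--\ref{prop:almostisometries} and the spectral gap), so your direction is workable, but two details are off. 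First, $\lVert \widetilde\I u_l\rVert_{L^2(\mu)}^2 = 1 + O(\veps\sqrt{\lambda_l^\veps} + \theta + \widetilde\delta)$ by Proposition \ref{prop:almostisometries}(2), not $1+O(\widetilde\delta)$ as you state; this does not break the bound but must be tracked. Second, the step ``composing with the discretization map $P$, whose error on smooth functions is of order $\widetilde\delta$'' only controls $\lVert \widetilde P g - g|_X\rVert$ for the smooth continuum eigenfunction $g$; you still need to relate $u_l$ to $\widetilde P\widetilde\I u_l$, and $\widetilde\I u_l$ is only Lipschitz on scale $\veps$, so this incurs an additional $O(\veps\sqrt{\lambda_l^\veps})$ error rather than $O(\widetilde\delta)$. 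That term is absorbed in the $(\cdots)^{1/2}$ part of the stated bound, but your write-up doesn't account for it. The paper's direction avoids these complications because the only object it discretizes is the $C^3$ eigenfunction $f$.

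The real gap is in Part (2): you invoke Theorem \ref{thm:evaluesveps}(2) to conclude $|\lambda_j^\veps - \sigma_\eta\lambda_j|\leq C\veps$, but in the paper the proof of Theorem \ref{thm:evaluesveps}(2) \emph{uses} Theorem \ref{thm:evectorsveps}(2) (it needs $\langle u_l,f_l\rangle_{L^2(\mu_n)}\geq c>0$, which is obtained from the eigenvector rate), so your argument is circular as written. What you actually need is not the sharp $O(\veps)$ eigenvalue rate but only that $|\lambda_j^\veps-\sigma_\eta\lambda_j|\leq\gamma_{\lambda_l}$ for the finitely many relevant $j$; this follows from Theorem \ref{thm:evaluesveps}(1) under the smallness hypothesis, which is exactly what the paper uses (equation \eqref{eqn:gapineq}). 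Replacing your citation of \ref{thm:evaluesveps}(2) with \ref{thm:evaluesveps}(1) closes the gap. Otherwise your Part (2) argument — restrict the $C^3$ eigenfunction to the point cloud, expand in the graph eigenbasis, combine pointwise consistency from Theorem \ref{thm:con1} with a lower bound on $|\lambda_j^\veps - \sigma_\eta\lambda_l|$ away from the matched block — is essentially the same as the paper's, phrased with Parseval instead of directly with $P_S^\perp$.
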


\begin{remark}
The constant $C_l$ in Theorem \ref{thm:evectorsveps} (ii) depends on the $C^3$ norm of all normalized eigenfunctions $f_l$ of $\Delta_\rho$ with eigenvalue $\lambda_l$, since our proof uses the pointwise consistency of graph Laplacians. The constant $C_l$ is also inversely proportional to the  spectral gap $\gamma_l$ defined as
\[\gamma_l = \min\{|\lambda_l - \lambda_j| \, :\, \lambda_l\neq \lambda_j\}.\]
A similar remark holds for the analogous result for $k$-nearest neighbor graphs given in Theorem \ref{thm:evectorsks}.
\label{rem:constants}
\end{remark}
\nc

As in Theorem \ref{thm:evaluesveps},  the main difference between the error estimates presented in Theorem \ref{thm:evectorsveps} is the regimes of $\veps$ for which they are meaningful. In general we may use energy arguments as in \cite{BIK} to obtain convergence rates that scale like the square root of the rates of convergence for eigenvalues.  On the other hand, in the regime $ \veps \gg \frac{\log(n)}{n^{1/{m+4}}}$ we can show that eigenfunctions of the graph Laplacian converge to eigenfunctions of $\Delta_{\rho}$ linearly in the connectivity length-scale $\veps$, coinciding in this way with the rates for eigenvalues and also with the rates of pointwise convergence (see Theorem \ref{thm:con1}).

The analogous estimates for $k$-NN graphs are the following.

\begin{theorem}[Rate of convergence for eigenvectors]
	\label{thm:evectorsks}

	Suppose that $\M, \mu$ satisfy the assumptions from Section \ref{Setup}. Suppose that the quantities $\widetilde \delta , \theta, k$ satisfy Assumptions \ref{assumptions}. Then:
\begin{enumerate}
\item  For every  $l \in \N$ there is a constant $c_l$ such that if 
\[( (k/n)^{1/m} \sqrt{\lambda_l} +  \widetilde \delta(n/k)^{1/m}   +\theta ) \lambda_l\leq c_l,\]
then with probability at least $1-Cn \exp(-Cn\theta^2 \widetilde \delta^m)$, for every $u_l$ normalized eigenvector of $\L^k$ with eigenvalue $\lambda_l^k$, there is a normalized eigenfunction $f_l$ of $\Delta_\rho^{NN}$ with eigenvalue $\lambda_l$ such that
\[\lVert u_l - f_l \rVert_{L^2(\mu_n)}\leq  \left(C_l\left(  \widetilde \delta(n/k)^{1/m} + (k/n)^{1/m} + \theta \right) \right)^{1/2} + C_l \widetilde \delta .\]

\item For every  $l \in \N$ there is a constant $c_l$ such that if 
\[((k/n)^{1/m} \sqrt{\lambda_l} +  \widetilde \delta(n/k)^{1/m}   +\theta    ) \lambda_l\leq c_l,\]
then with probability at least $1- Cn \exp(-C n \theta^2 \widetilde {\delta}^m) - Cn\exp\left( -c(k/n)^{4/m}k\right) $, for every $u_l$ normalized eigenvector of $\L^k$ with eigenvalue $\lambda_l^k$, there is a normalized eigenfunction $f_l$ of $\Delta_\rho^{NN}$ with eigenvalue $\lambda_l$ such that 
\[  \lVert u_l - f_l \rVert_{L^2(\mu_n)} \leq C_l(k/n)^{1/m}. \]
\end{enumerate}	
\end{theorem}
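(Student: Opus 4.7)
The plan is to mirror the proof of Theorem \ref{thm:evectorsveps} line by line, substituting every $\veps$-graph ingredient with its $k$-NN counterpart: the discretization and interpolation maps of Section \ref{sec:proofsUnk} tailored to the inhomogeneous length scale $\eps_k(\cdot)$ in place of their fixed-$\veps$ versions, the eigenvalue bounds of Theorem \ref{thm:evaluesks} in place of Theorem \ref{thm:evaluesveps}, and the pointwise consistency Theorem \ref{thm:sknncon} in place of Theorem \ref{thm:con1}. Because the limit operator is now $\Delta_\rho^{NN}$, continuum Dirichlet energies are compared via $D_{1-2/m}$ instead of $D_2$.

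For part (i), fix a normalized eigenvector $u_l$ of $\L^k$ with eigenvalue $\lambda_l^k$, and let $I$ denote the $k$-NN interpolation operator from $L^2(\mu_n)$ into $L^2(\mu)$. The construction of $I$ is designed so that, on a high-probability event, it is an approximate $L^2$-isometry and the continuum Dirichlet energy is controlled by the graph Dirichlet energy, each up to an additive error of order $(k/n)^{1/m} + \widetilde \delta (n/k)^{1/m} + \theta$. Expanding $Iu_l = \sum_j c_j f_j$ in an $L^2(\mu)$-orthonormal eigenbasis $\{f_j\}$ of $\Delta_\rho^{NN}$, Theorem \ref{thm:evaluesks}(i) locates $\lambda_l^k$ in a small window around $\sigma_\eta \lambda_l$, and the energy identity $\sum_j \lambda_j c_j^2 = \tfrac{1}{2} D_{1-2/m}(I u_l)$ then forces the $L^2$-mass of $Iu_l$ to concentrate on the eigenspace $V_l := \Span\{f_j : \lambda_j = \lambda_l\}$ with the stated tolerance. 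A Gram--Schmidt step inside $V_l$ selects the eigenfunction $f_l$, and restricting it back to $X$ through the discretization operator contributes the additive $C_l \widetilde \delta$ term, which explains the square-root-plus-$\widetilde\delta$ rate.

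For part (ii), we bootstrap the a priori rate from (i) through the self-adjoint identity sketched in Section \ref{sec:outlineproofs}. Using the $C^3$ regularity of eigenfunctions and Theorem \ref{thm:sknncon} applied to $f_l$, on a high-probability event
\[\lVert \L^k f_l|_X - \sigma_\eta \lambda_l \, f_l|_X \rVert_{L^2(\mu_n)} \leq C_l (k/n)^{1/m}.\]
Expanding $f_l|_X = \sum_j b_j u_j$ in the $L^2(\mu_n)$-orthonormal basis of $\L^k$-eigenvectors and applying Parseval gives $\sum_j (\lambda_j^k - \sigma_\eta \lambda_l)^2 b_j^2 \leq C_l^2 (k/n)^{2/m}$. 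Part (ii) of Theorem \ref{thm:evaluesks} provides a quantitative spectral gap $|\lambda_j^k - \sigma_\eta \lambda_l| \geq \gamma_l/2$ whenever $\lambda_j \neq \lambda_l$, so $f_l|_X$ lies within $L^2(\mu_n)$-distance of order $(k/n)^{1/m}$ of the $\L^k$-eigenspace spanned by $\{u_j : |\lambda_j^k - \sigma_\eta\lambda_l| < \gamma_l/2\}$. Combining with the a priori rate from (i), which pins $u_l$ near $V_l$, a change of basis inside this finite-dimensional cluster selects the eigenfunction $f_l$ of $\Delta_\rho^{NN}$ that matches $u_l$ with the desired linear rate.

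The main obstacle is the multiplicity case: when $\lambda_l$ is degenerate, individual eigenfunctions cannot be paired, only eigenspaces, and the coupling between $u_l$ and a specific $f_l$ must be chosen within $V_l$. The a priori rate of (i) is what makes this coupling quantitative, since without it the pointwise bound would only localize $f_l|_X$ near a potentially larger cluster of nearby $\L^k$-eigenvalues. The remaining technical content is purely $k$-NN-specific --- controlling interpolation and discretization errors in the inhomogeneous-scale setting and ensuring the pointwise consistency constants depend only on the $C^3$ norm of $f_l$ and on the spectral gap $\gamma_l$ --- and is imported from the preceding $k$-NN analysis. The probabilistic events are the intersections of those in Theorem \ref{thm:evaluesks} and Theorem \ref{thm:sknncon}, producing the stated probability bounds.
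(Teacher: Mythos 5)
Your overall strategy is right and matches the paper's: the paper explicitly states that once Propositions~\ref{prop:localnonlocalkNN} and~\ref{prop:almostisometrieskNN} are in place, the proofs of Theorems~\ref{thm:evaluesks} and~\ref{thm:evectorsks} are ``exactly as in the $\veps$-setting,'' so mirroring the $\veps$-graph argument with the $k$-NN ingredients is precisely the intended route. However, there are two places where your version departs from the paper's argument in ways worth flagging.

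First, in part (i) you run the comparison in the opposite direction from the paper. The paper fixes a continuum eigenfunction $f$, sets $u := \widetilde P f$, and compares $b_k(u) = \langle \L^k u, u\rangle_{L^2(\mu_n)}$ with the \emph{discrete} spectral decomposition of $u$ (via the orthogonal projector $P_S$ onto the relevant $\L^k$-eigenspace). You instead interpolate the discrete eigenvector $u_l$ into the continuum, expand $Iu_l$ in the $\Delta_\rho^{NN}$-eigenbasis, and argue mass concentrates on $V_l$. This dual route can be made to work, but the energy bound $D_{1-2/m}(Iu_l) \lesssim \lambda_l$ together with $\|Iu_l\|\approx 1$ alone does \emph{not} force concentration on $V_l$: the mass could split between eigenspaces above and below $\lambda_l$. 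You need to additionally show $Iu_l$ has small component in the lower eigenspaces, which requires an inductive argument on $l$ using the approximate orthogonality $\langle Iu_l, Iu_j\rangle \approx 0$ for $j<l$ plus the inductive hypothesis $Iu_j \approx V_j$. The paper's direction of comparison handles the same issue, but it only shows the $l=2$ case explicitly, so either direction demands the same supplementary bookkeeping.

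Second, in part (ii) you invoke ``Part (ii) of Theorem~\ref{thm:evaluesks}'' to produce the spectral gap $|\lambda_j^k - \sigma_\eta\lambda_l|\geq \gamma_l/2$. This would be circular: the paper derives Theorem~\ref{thm:evaluesks}(ii) from Theorem~\ref{thm:evectorsks}(ii), so you cannot assume the former when proving the latter. The gap must come from the a priori bound, Theorem~\ref{thm:evaluesks}(i), which places all $\lambda_j^k$ within an $o(1)$ window of $\sigma_\eta\lambda_j$ and thereby separates $\{\lambda_j^k : \lambda_j = \lambda_l\}$ from the rest. Relatedly, the appeal to part (i) of the eigenvector theorem (``which pins $u_l$ near $V_l$'') is unnecessary: the paper proves (ii) before (i) and closes the argument purely from the pointwise-consistency bound $\|P_S^\perp f_j|_X\| \leq C(k/n)^{1/m}$ for a basis of $V_l$, followed by a Gram--Schmidt/change-of-basis step inside $S$. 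Your addition doesn't cause an error, but it makes the probability bookkeeping look like it needs the events in part (i), which it does not.
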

\nc

There are other ways to study the convergence of eigenvectors of graph Laplacians. As we will see in Section \ref{sec:TL2convergence}, Theorems \ref{thm:evectorsveps} and \ref{thm:evectorsks}, and the regularity of eigenfunctions of $\Delta_\rho$ and $\Delta_\rho^{NN}$ imply the convergence of eigenvectors in the so called $TL^2$-sense (see \cite{trillos2018spectral}). This notion of convergence makes explicit a way to compare different spectral embeddings that form the basis of algorithms like spectral clustering \cite{vonLux_tutorial}. To be more precise let $L \in \N$. With the first $L$ eigenvectors $u_1, \dots, u_L$ of $\L^\veps$ (or $\L^k$) one can construct an embedding of the data set
\[ F_n : \{ x_1, \dots, x_n \} \rightarrow \R^L  \]
\[  F_n(x_i) = \left(\begin{matrix} u_1(x_i) \\ \vdots \\ u_L(x_i) \end{matrix} \right). \]
The resulting set of points can be represented by the measure $F_{n \sharp} \mu_n$ , i.e., the push-forward of the original empirical measure $\mu_n$ by the graph Laplacian embedding $F_n$. A natural question to ask in this setting is: how far apart is $F_{n \sharp} \mu_n$  from the measure $F_{\sharp }\mu$, where $F$ is the continuum Laplacian embedding:
\[ F : \M \rightarrow \R^L \]
\[  F(x) = \left(\begin{matrix} f_1(x) \\ \vdots \\ f_L(x) \end{matrix} \right),\]
constructed from eigenfunctions $f_1, \dots, f_L$ of the continuum operator $\Delta_\rho$ (or $\Delta_\rho^{NN}$)? We answer this question in terms of the Wasserstein distance $W_2$ between $F_{n \sharp} \mu_n$ and $F_{\sharp} \mu$, which we recall is defined by
\begin{equation}
 W_2(F_{\sharp } \mu, F_{\sharp n} \mu_n )  := \min_{\pi \in \Gamma(F_{\sharp } \mu, F_{n\sharp } \mu_n)} \int_{\R^L \times \R^L} |x-y|^2 d \pi(x,y),
\label{def:Wassdistance}
\end{equation}
where $\Gamma(F_{\sharp } \mu, F_{n\sharp } \mu_n)$ denotes the set of couplings or transport plans between $F_{\sharp } \mu$ and $F_{n\sharp } \mu_n$. This way of comparing the spectral embeddings was proposed in \cite{GeoemtricStructure}.

\begin{corollary}
	\label{cor:WassersteinIneq}
	Suppose that $\M, \mu$ satisfy the assumptions from Section \ref{Setup}. 
	
	\begin{enumerate}
		\item ($\veps$-setting) Suppose that the quantities $\widetilde \delta , \theta, \veps$ are small enough as in 2 in Theorem \ref{thm:evectorsveps}. Then, with probability at least $1- Cn \exp(-C n \theta^2 \widetilde {\delta}^m) - Cn\exp\left(-C n\eps^{m+4}\right) $, we have
		\[ W_2(F_\sharp \mu, F_{n \sharp} \mu_n) \leq C \veps + C W_2(\mu, \mu_n),\]
		where $F_n$ is the graph Laplacian embedding constructed from the first $L$ eigenvectors of $\L^\veps$, and $F$ is the Laplacian embedding constructed using the eigenfunctions $f_1, \dots, f_L$ of $\Delta_\rho$ from Theorem \ref{thm:evectorsveps}.
		\item Suppose that the quantities $\widetilde \delta , \theta, (k/n)^{1/m}$ are small enough as in 2 in Theorem \ref{thm:evectorsks}. Then, with probability at least $1- Cn \exp(-C n \theta^2 \widetilde {\delta}^m) - Cn\exp\left(-C (k/n)^{4/m}k\right) $, we have
		\[ W_2(F_\sharp \mu, F_{n \sharp} \mu_n) \leq C (k/n)^{1/m} + C W_2(\mu, \mu_n),\]
		where $F_n$ is the graph Laplacian embedding constructed from the first $L$ eigenvectors of $\L^k$, and $F$ is the Laplacian embedding constructed using the eigenfunctions $f_1, \dots, f_L$ of $\Delta_\rho^{NN}$ from Theorem \ref{thm:evectorsks}.
	\end{enumerate}
\end{corollary}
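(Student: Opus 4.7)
The plan is to bound $W_2(F_\sharp \mu, F_{n\sharp}\mu_n)$ via the triangle inequality, splitting through the intermediate measure $F_\sharp \mu_n$ (the pushforward of the empirical measure under the \emph{continuum} embedding):
\[W_2(F_\sharp \mu, F_{n\sharp}\mu_n) \leq W_2(F_\sharp \mu, F_\sharp \mu_n) + W_2(F_\sharp \mu_n, F_{n\sharp}\mu_n).\]
Each term is handled by a distinct mechanism: the first by the Lipschitz contraction property of the pushforward, the second by a diagonal coupling and the $L^2(\mu_n)$ eigenvector estimates proved in Theorems \ref{thm:evectorsveps} and \ref{thm:evectorsks}.

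For the first summand, observe that each continuum eigenfunction $f_l$ used to build $F$ is at least $C^3(\M)$ by elliptic regularity, hence globally Lipschitz on the compact manifold $\M$. Since $F = (f_1,\dots,f_L)$ is a finite concatenation, it is Lipschitz with constant $\Lip(F) \leq C$ depending on $L$ and on the $C^1$ norms of $f_1,\dots,f_L$. The standard identity $W_2(T_\sharp \nu_1, T_\sharp \nu_2) \leq \Lip(T)\, W_2(\nu_1,\nu_2)$ (obtained by pushing an optimal plan for $(\nu_1,\nu_2)$ forward under $T \otimes T$) then yields
\[W_2(F_\sharp \mu, F_\sharp \mu_n) \leq C\, W_2(\mu, \mu_n).\]

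For the second summand, both measures are atomic with equal weight $1/n$ at $n$ points, so the coupling $\frac{1}{n}\sum_{i=1}^n \delta_{(F(x_i), F_n(x_i))}$ is an admissible transport plan. This gives
\[W_2^2(F_\sharp \mu_n, F_{n\sharp}\mu_n) \leq \frac{1}{n}\sum_{i=1}^n |F(x_i) - F_n(x_i)|^2 = \sum_{l=1}^L \|f_l - u_l\|_{L^2(\mu_n)}^2.\]
In the $\veps$-graph setting, Theorem \ref{thm:evectorsveps}(ii) (applied with the indicated probability and with $f_l$ chosen to match $u_l$, as permitted by the statement) bounds each term on the right by $C_l \veps$, so a union bound over the finitely many indices $l=1,\dots,L$ produces $W_2(F_\sharp \mu_n, F_{n\sharp}\mu_n) \leq C\veps$. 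The $k$-NN case is identical, using Theorem \ref{thm:evectorsks}(ii) with $(k/n)^{1/m}$ in place of $\veps$, and the event is intersected with the high-probability event of Theorem \ref{thm:evectorsks}(ii), producing the stated exponential probability.

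No serious obstacle is expected; the argument is essentially bookkeeping combined with two well-known facts (Lipschitz contraction of the pushforward and the trivial upper bound for $W_2$ between two empirical measures sharing the same base points). The only mild subtlety is ensuring the choice of continuum eigenfunctions $f_l$ used to define $F$ is compatible with the matchings supplied by Theorems \ref{thm:evectorsveps}(ii) and \ref{thm:evectorsks}(ii) when eigenvalues are repeated, but this is guaranteed by the ``for every $u_l$ there exists $f_l$'' quantifier structure of those statements.
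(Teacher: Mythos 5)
Your proof is correct and uses essentially the same ingredients as the paper: the Lipschitz regularity of the continuum eigenfunctions to control the $\mu$ vs.\ $\mu_n$ mismatch by $W_2(\mu,\mu_n)$, and the $L^2(\mu_n)$ eigenvector estimates of Theorems \ref{thm:evectorsveps}(ii) and \ref{thm:evectorsks}(ii) to control the $F$ vs.\ $F_n$ discrepancy at the sample points. The only difference is organizational: you apply the triangle inequality through the intermediate measure $F_\sharp \mu_n$ and then invoke the Lipschitz-pushforward bound and a diagonal coupling separately, whereas the paper pushes a single optimal plan $\pi^*\in\Gamma(\mu,\mu_n)$ through $(F,F_n)$ via Lemma \ref{WassersteinIneq} and then splits $|F(x)-F_n(y)|^2\leq 2|F(x)-F(y)|^2+2|F(y)-F_n(y)|^2$; both routes yield the same conclusion up to a constant factor.
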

\nc

\begin{remark}
Probabilistic bounds for 
\[W_2^2(\mu , \mu_n) = \min _{\pi\in \Gamma(\gamma, \widetilde \gamma)} \int_{\M \times \M}d_\M(x,y)^2 d \pi(x,y) \]
can be easily derived using a localization argument (to deal with the curved manifold) and the concentration inequalities estimating the Wasserstein distance between empirical measures and their ground truth counterparts in the Euclidean setting (see \cite{Fournier} and a more recent treatment with improved constants and better scalability with respect to dimension in \cite{JingLei}). The localization argument is used simply to partition the manifold $\M$ into a fixed number of regions that are bi-Lipschitz homeomorphic to a bounded domain in $\R^m$. One can then lift the concentration results in Euclidean space $\R^m$ to the manifold. In particular, $W_2(\mu , \mu_n)$ scales like $ \frac{1}{n^{1/m}}$.

\end{remark}
\nc

\subsection{Outline of proofs and discussion}
\label{sec:outlineproofs}

The proofs of our main theorems follow the same structure in both the $\veps$-graph and $k$-NN settings. For this reason we outline our proofs only in the $\veps$-graph setting and then simply state the modifications needed in order to cover the $k$-NN case.

Our first step is to establish some a priori (and non-optimal) rates of convergence for eigenvalues of $\L^\veps$ towards eigenvalues of $\Delta_\rho$. In principle, we could cite the results already proved in \cite{trillos2018spectral}, but we have decided to prove our own a priori results, so that we have the opportunity to present a simpler approach that works for a larger set of values of $\veps$ than those covered in \cite{trillos2018spectral}.  \nc

To discuss our construction, it is worth first reviewing the one in \cite{BIK}, which is a main influence for the construction in our work and the work in \cite{trillos2018spectral}. There the main idea is to construct maps
\[ P\colon L^2(\mu) \to L^2(\mu_n), \quad \I \colon L^2(\mu_n) \rightarrow L^2(\mu)  \]
that are almost isometries when restricted to functions of low Dirichlet energy (in discrete and continuum settings), and for which one has estimates of the form
\[ b_\veps(P f)  \leq (1+ e) \sigma_\eta D_2(f), \quad \forall f \in L^2( \mu), \]
and
\begin{equation}
\sigma_\eta D_2(\I u) \leq (1+e)b_\veps(u), \quad  \forall u \in L^2(\mu_n), 
\label{ineq:energyestimate}
\end{equation}
where $e$ is thought of as a small error term. When combined with the variational identities \eqref{eqn:varcharacveps} and \eqref{eqn:varcharaccont}, these inequalities produce error bounds for the difference between eigenvalues of $\L^\veps$ and $\Delta_\rho$: one can upper bound $\lambda_l^\veps$ with $\lambda_l + Ce$ choosing $S$ in \eqref{eqn:varcharaccont} to be the image under $P$ of the span of the first $l$ eigenvectors of $\Delta_\rho$, and likewise, one can upper bound  $\lambda_l $ by $\lambda_l^\veps + Ce$ choosing $S$ in \eqref{eqn:varcharacveps} to be the image under $\I$ of the span of the first $l$ eigenvectors of $\L^\veps$. Now, the maps $P$ and $\mathcal{I}$ introduced in \cite{BIK} are based on an $\infty$-optimal transport map between $\mu$ and $\mu_n$. That is, they use a map  $T : \M \rightarrow \{x_1, \dots, x_n \}$ that pushes forward the measure $\mu$ into the empirical measure $\mu_n$, and does so in such a way that it minimizes the quantity
\[  \delta := \sup_{x \in \M} d_\M(x, T(x)), \]  
over all possible such transport maps.  $T$ can be used to generate a tessellation $U_1, \dots, U_n$ of $\M$ (here $U_i := T^{-1}( \{ x_i \})$) , with the property that all cells $U_i$ have $\mu$-measure equal to $1/n$ and diameter bounded by $2 \delta$. The maps $P$ and $\I$ are then defined according to
\begin{equation*}
( Pf)(x_i) \coloneqq n \cdot \int_{ U_i} f(x)  \rho (x) dx, \quad f \in  L^2(  \mu).
\end{equation*}
and
\begin{equation*}
Iu \coloneqq \Lambda_{\veps - 2\delta} P^*u, \quad u \in  L^2(  \mu_n).
\end{equation*}
where $P^*$ is the adjoint of $P$ (i.e. composition with $T$) and $\Lambda_{\veps - 2\delta}$ is a convolution operator with respect to a very carefully chosen kernel (chosen conveniently to guarantee the energy inequality \eqref{ineq:energyestimate}) with bandwidth $\veps- 2 \delta$. Notice that in order to get convergence rates for the spectrum it is crucial that the leading term on the right hand side of \eqref{ineq:energyestimate} is $b_\veps(u)$ (with no constants in front). In this construction it is also important to have $\veps > 2 \delta$.  Now, as proved in \cite{trillos2018spectral}, the scaling of $\delta = d_\infty(\mu, \mu_n)$ in terms of $n$ is 
\begin{equation}
\delta  \sim C\begin{cases}  \frac{\log(n)^{3/4}}{n^{1/2}} & m=2 \\  \frac{\log(n)^{1/m}}{n^{1/m}} & m \geq 3,  \end{cases}
\label{eqn:Oldot}
\end{equation}
which means that in dimension $m=2$, $\veps$ is forced to be  larger  than $\frac{\log(n)^{3/4}}{n^{1/2}}$. The extra logarithmic factor with respect to the connectivity threshold has been shown to be unnecessary in order to establish consistency of closely related problems as illustrated in \cite{MShaCCS} and \cite{MullerPenrose}. Here we also remove the extra logarithmic factor, offering also quantitative rates of convergence. For that purpose, we use modified discretization and interpolation maps $\widetilde P$, $\widetilde \I$ constructed as before, but based on a map $\widetilde T$ pushing forward a conveniently chosen measure  $\tilde \mu_n$ with density $\widetilde \rho_n$ (uniformly close to $\rho$) into the empirical measure $\mu_n$. The idea is that $\widetilde{\mu}_n$ can be chosen to be closer to $\mu_n$ in the $\infty$-OT sense than $\mu$ itself. More precisely we have the following proposition which is proved in the Appendix.

\begin{proposition}
	Suppose that $\M, \mu$ satisfy the assumptions from Section \ref{Setup}. Suppose that the quantities $\widetilde \delta , \theta, \veps$ satisfy Assumptions \ref{assumptions}. Then, with probability greater than $1- n \exp( - C n \theta^2 \widetilde{\delta}^m     )$, there exists a probability measure $\widetilde \mu_n$ with density function $\widetilde \rho _n : \M \rightarrow \R$ such that
	\[  \min_{T_{\sharp} \widetilde \mu_n =\mu_n}\sup_{x \in \M} d_\M(x, T(x)) \leq  \widetilde{\delta} ,  \]
	and such that
	\[  \lVert  \rho - \widetilde{\rho}_n \rVert_{L^\infty(\mu)} \leq C \left( \theta   +   \widetilde \delta  \right). \]
	We will use $\widetilde T$ to denote an optimal transport map.
	\label{prop:AuxiliaryDensity}
\end{proposition}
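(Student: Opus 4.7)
The plan is to construct $\widetilde\mu_n$ with a piecewise constant density that exactly matches the empirical mass on each cell of a fine partition of $\M$, so that transporting $\widetilde\mu_n$ to $\mu_n$ decouples into independent short transports within cells. Specifically, I would take a maximal $\widetilde\delta/2$-separated subset of $\M$ in the intrinsic distance $d_\M$ and form its geodesic Voronoi tessellation $\{A_1,\dots,A_N\}$. Using the curvature and injectivity-radius bounds recorded in Section~\ref{sec:diffgeom} together with $\widetilde\delta \leq \eps_\M$, Bishop--Gromov volume comparison gives $\diam_\M(A_j)\leq\widetilde\delta$ and $c\,\widetilde\delta^m\leq\vol_\M(A_j)\leq C\widetilde\delta^m$ for every $j$, and consequently $N\leq C\widetilde\delta^{-m}\leq Cn$ thanks to Assumption~\ref{assumptions}(3).

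With $N_j:=n\mu_n(A_j)$, I would set $\widetilde\rho_n(x):=(N_j/n)/\vol_\M(A_j)$ for $x\in A_j$ and $\widetilde\mu_n:=\widetilde\rho_n\,d\vol_\M$. By construction $\widetilde\mu_n$ is a probability measure and $\widetilde\mu_n(A_j)=\mu_n(A_j)$ for every $j$. A transport map $\widetilde T$ realizing $\widetilde T_\sharp \widetilde\mu_n=\mu_n$ is then produced cell by cell: on each $A_j$ with $N_j\geq 1$, partition $A_j$ into $N_j$ pieces of equal $\widetilde\mu_n$-mass $1/n$ (possible because $\widetilde\rho_n$ is constant on $A_j$) and send each piece to a distinct sample point $x_i\in A_j$. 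Since displacement is confined to a single cell, $d_\M(x,\widetilde T(x))\leq \diam_\M(A_j)\leq\widetilde\delta$.

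For the $L^\infty$ estimate on $\widetilde\rho_n-\rho$, note that $N_j\sim\mathrm{Bin}(n,\mu(A_j))$ with $\mu(A_j)\asymp\widetilde\delta^m$, so Bernstein's inequality combined with a union bound over the $N\leq Cn$ cells yields
\[
|N_j/n-\mu(A_j)|\leq \theta\,\mu(A_j)\quad\text{for all } j,
\]
with probability at least $1-Cn\exp(-cn\theta^2\widetilde\delta^m)$, which is the required probability bound. On this event, for $x\in A_j$,
\[
|\widetilde\rho_n(x)-\rho(x)|\leq \frac{|N_j/n-\mu(A_j)|}{\vol_\M(A_j)}+\left|\frac{\mu(A_j)}{\vol_\M(A_j)}-\rho(x)\right|\leq C\theta+\Lip(\rho)\,\widetilde\delta,
\]
where the second term is controlled using $\diam_\M(A_j)\leq\widetilde\delta$ and the Lipschitz regularity of $\rho\in C^{2,\alpha}(\M)$. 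Assumption~\ref{assumptions}(4) finally guarantees $\widetilde\rho_n\geq \rho_{\min}/2>0$, so $\widetilde\mu_n$ is a genuine probability measure with positive density.

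The main obstacle is producing a partition that simultaneously achieves intrinsic diameter control by $\widetilde\delta$, two-sided volume comparability to $\widetilde\delta^m$, and cardinality $\leq Cn$; this is exactly what is needed for the union bound to close without spawning an extra logarithmic factor (and thus to improve upon the previous estimate \eqref{eqn:Oldot}), and it is here that the injectivity-radius threshold $\widetilde\delta\leq\eps_\M$ and the Bishop--Gromov comparison enter essentially. The rest of the argument is then a routine Bernstein concentration combined with the observation that equidistributing mass $1/n$ inside a cell is trivial when the density on the cell is constant.
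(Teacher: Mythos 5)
Your proof is correct and follows the same overall strategy as the paper's: tessellate $\M$ into cells of diameter $O(\widetilde\delta)$ and volume $\asymp\widetilde\delta^m$, define $\widetilde\rho_n$ as piecewise constant matching the empirical cell masses, bound the sup-norm error via a within-cell oscillation term plus a concentration term, and union-bound over the cells (whose cardinality $\leq Cn$ by the lower bound $\widetilde\delta\geq n^{-1/m}$). The one genuine difference is how the tessellation is built. You take a maximal $\widetilde\delta/2$-separated set and use its geodesic Voronoi cells, obtaining the diameter bound from covering, the volume lower bound from the disjoint inscribed balls $B_\M(p_j,\widetilde\delta/4)$ plus Rauch/Bishop--Gromov, and the cardinality bound by packing. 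The paper instead first cuts $\M$ into a fixed, macroscopic number of pieces each bi-Lipschitz to a Euclidean cube $[0,r]^m$, then subdivides the cube into rectangles of diameter $<\widetilde\delta$ and pulls these back. Your construction is more direct and purely intrinsic, and it gives the cleaner diameter bound $\diam_\M(A_j)\leq\widetilde\delta$ rather than $C\widetilde\delta$ (which makes the transport estimate literally match the statement without rescaling). The paper's chart-based version, while less economical here, produces cells with controlled aspect ratio in coordinates, which can be convenient when one needs more structured geometry downstream; for this proposition nothing beyond diameter and two-sided volume control is used, so your route is sufficient. Both constructions feed the same Bernstein/Chernoff union-bound argument, and both correctly avoid the extra logarithmic factor of the $\infty$-transport distance $d_\infty(\mu,\mu_n)$ by targeting the auxiliary measure $\widetilde\mu_n$ rather than $\mu$ itself.
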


\begin{remark}
	Suppose that in Proposition \ref{prop:AuxiliaryDensity} we take $\theta:=\sqrt{\frac{(\alpha+1) \log(n)}{Cn\widetilde \delta^m} }$  for some $\alpha >1$. Then it  follows that with probability at least $1- \frac{1}{n^\alpha}$ there exists a measure $\widetilde \mu _n$ such that
	\[\min_{T_{\sharp} \widetilde \mu_n =\mu_n}\sup_{x \in \M} d_\M(x, T(x)) \leq  \widetilde{\delta}\]
	and
	\[  \lVert  \rho - \widetilde{\rho}_n \rVert_{L^\infty(\mu)} \leq C \left(    \sqrt{\frac{(\alpha+1) \log(n)}{Cn\widetilde \delta^m} }  +   \widetilde \delta  \right). \]
	In particular, if $\widetilde \delta$ is set to scale like $\frac{\log(n)^{1/m}}{n^{1/m}}\ll \widetilde \delta \ll 1$, we can make $\lVert  \rho - \widetilde{\rho}_n \rVert_{L^\infty(\mu)}$ arbitrarily small.
\end{remark}

%

%
%

After we derive some properties of the new maps $\widetilde P$ and $\widetilde \I$ we will be able to prove the first part of Theorem \ref{thm:evaluesveps}, as well as the first part of Theorem \ref{thm:evectorsveps} following the proof scheme in \cite{trillos2018spectral}. We will also be able to prove the second part of Theorem \ref{thm:evectorsks}. For this we follow some of the steps in the proof of the Davis-Kahan theorem. We will need to use \emph{pointwise consistency} for the graph Laplacian together with our a priori estimates for the error of approximation of eigenvalues, in order to isolate eigenvalues of $\Delta_\rho$ and match them with eigenvalues of $\mathcal{L}^\veps$. We notice that the pointwise consistency for $\eps$-graphs has been established in previous works~\cite{hein2007graph}. For completeness, we give a rather simple proof of consistency in Section \ref{sec:epsLap}. To apply the pointwise consistency we require regularity of the continuum eigenfunctions (see the discussion at the beginning of Section \ref{sec:eigenvectors}), which is an ingredient that has not been utilized in previous works on spectral convergence.

%
%
%

\nc

%
%

Finally, to improve the rates of convergence for eigenvalues (i.e. to obtain the second part of Theorem \ref{thm:evaluesveps}) we will use the error rates for the convergence of eigenvectors as follows. Let us fix $l \in \N$ and let $u$ be a normalized eigenvector of $\L^\veps$ with corresponding eigenvalue $\lambda_l^\veps$. We know a priori that we can find $f \in L^2(\mu)$ a normalized eigenfunction of $\Delta_\rho$ with eigenvalue $\lambda_l$, that is close to $u$, so that in particular
\begin{equation}
\langle u , f \rangle_{L^2(\mu_n)} \geq c >0,  
\label{eqn:apriori}  
\end{equation}
where $c$ is independent of $\veps$, or $n$; in the above formula we interpret $f$ as the restriction of $f: \M \rightarrow \R$ to $X$ (a well defined operation given the continuity of eigenfunctions of $\Delta_\rho$).
We will then arrive to an inequality of the form 
\[  \lvert   \lambda_l^\veps - \sigma_\eta \lambda_l  \rvert  = \frac{\lvert \langle  u, \L^\veps f- \sigma_\eta \Delta_\rho f \rangle  \rvert_{L^2(\mu_n)}}{\lvert \langle u,f \rangle  \rvert_{L^2(\mu_n)}} \leq \frac{1}{c} \lVert(\L^\veps - \sigma_\eta\Delta_\rho) f \rVert_{L^2(\mu_n)}. \]
The above computations show that in order to estimate  $ \lvert   \lambda_l^\veps  - \lambda_l  \rvert$ it suffices to use the pointwise consistency of graph Laplacians applied to the function $f$.

\red

\nc

%
%
%

For the most part, the proof strategy for Theorems \ref{thm:evaluesks} and \ref{thm:evectorsks} is similar to the one described before. However, there are two important modifications we need to make. First, the pointwise estimates for $\L^{k} f (x_i) - \sigma_\eta \Delta^{NN}_\rho f(x_i) $ require new technical computations which are presented in detail in Section \ref{sec:sknn}. These results are new in the literature. Second, regarding the relevant a priori estimates, we must first construct a new interpolation map $\widetilde \I$ as now the connectivity length-scale changes in space. Once the main properties of the new map  $\widetilde I$ have been established our desired results will follow in the exact same way as in the $\veps$-graph case.

We suspect that our eigenvector and eigenvalue rate of $O\left( n^{-1/(m+4)} \right)$ is close to optimal. In the general manifold setting, we suspect $O(n^{-1/m})$ to be the optimal convergence rate for eigenvectors, since this represents the resolution of the point cloud (i.e., the typical inter-point distance). It would be interesting to establish lower bounds for all convergence rates in this paper. We point out that some parts of our proof for the eigenvalue convergence rate can produce lower bounds. For example, in the $\eps$-graph setting, we look forward to  \eqref{eq:keyforlower} in the eigenvalue convergence rate proof, which yields
\[|\lambda^\veps_l - \sigma_\eta \lambda_l|\geq  C_l\sup_{\substack{f\in S(\Delta_\rho,\lambda_l)\\ u\in S(\L^\eps,\lambda^\eps_l)}}|\langle u,\L^\eps f - \sigma_\eta\Delta_\rho f \rangle_{L^2(\mu_n)}|  \]
with probability at least $1-2\exp\left( -cn \right)$, where $S(\Delta_\rho, \lambda)$ is the set of unit norm eigenfunctions of $\Delta_\rho$ with eigenvalue $\lambda$, and  $S(\L^\eps,\lambda)$ is the corresponding set for $\L^\veps$. Obtaining lower bounds on the right hand side would prove lower bounds on the eigenvalue convergence rate. This appears to be a hard problem that will involve tools from the field of stochastic homogenization, since the pointwise consistency errors may average out against $u$ to something smaller than the upper bound given by the application of Cauchy-Schwartz that we made above. We plan to explore lower bounds in a future work.

\section{Pointwise consistency of graph Laplacians}
\label{sec:consistency}

In this section we prove pointwise consistency with a linear rate for our two constructions of the graph Laplacian. The $\eps$-graph Laplacian is considered in Section \ref{sec:epsLap}, while the  undirected $k$-NN Laplacian is considered in Section \ref{sec:sknn}. For the $\eps$-graph Laplacian, the linear rate was established earlier in \cite{hein2007graph}; we give a simpler proof for completeness. Consistency for $k$-NN graph Laplacians was studied in \cite{ting2010analysis}, but the methods used were unable to establish any convergence rates. 

Before giving the pointwise consistency proofs, we review some differential geometry in Section \ref{sec:diffgeom}, and concentration of measure in Section \ref{sec:concentration}.

\subsection{Differential geometry}
\label{sec:diffgeom}

We first briefly review some basic results from differential geometry. We refer the reader to \cite{docarmo1992riemannian} for more details. We write $B_\M (x,r)\subset\M$ to denote the geodesic ball in $\M$ of radius $r$ centered at $x$, while we use $B(x,r)$ to denote Euclidean balls in $\R^m$ or in $\R^d$ depending on context. For each $x\in \M$, $\exp_x:T_x\M \to \M$ is the Riemannian exponential map. Let $K$ be an upper bound on the absolute values of the sectional curvatures, let $R$ be the reach of $\M$, and let $i_0$ be a lower bound on the injectivity radius of $\M$. For any $0 < r < \min\{i_0,K^{-1/2}\}$, $\exp_x:B(0,r)\to \M$ is a diffeomorphism between the ball $B(0,r) \subset T_x\M$ and the geodesic ball $B_\M(x,r)\subset \M$. Let us denote the Jacobian of $\exp_x$ at $v\in B(0,r)\subset T_x\M$ by $J_x(v)$. By the Rauch Comparison Theorem
\begin{equation}\label{eq:dist}
(1+CK|v|^2)^{-1} \leq J_x(v) \leq 1 + CK|v|^2.
\end{equation}
It follows that
\begin{equation}\label{eq:vol}
|\vol_\M(B_\M(x,r))-\alpha_m r^m|\leq CKr^{m+2}.
\end{equation}
We also recall \cite[Proposition 2]{trillos2018spectral} 
\begin{equation}\label{eq:d}
|x-y|\leq d_\M(x,y)\leq |x-y|+ \frac{8}{R^2}|x-y|^3,
\end{equation}
provided $|x-y|\leq R/2$. In the above $d_\M$ denotes the geodesic distance on $\M$.

Throughout this section we always assume $n$ and $\veps$ satisfy Assumptions \ref{assumptions}. We use $C,c>0$ to denote arbitrary constants that depend only on dimension $m$, $\rho$, and on the geometric properties of the manifold $\M$, such as the injectivity radius or sectional curvature. We always have $0 < c < 1$ and $C\geq 1$. 

\subsection{Concentration of measure}
\label{sec:concentration}

We now state a simple concentration inequality, which follows from Bernstein's inequality \cite{boucheron2013concentration}, that is particularly convenient for analysis of graph Laplacians. 
\begin{lemma}\label{lem:ch}
Suppose that $\M, \mu$ satisfy the assumptions from Section \ref{Setup} and let $x_1, \dots, x_n$ be samples from $\mu$. Let $\psi:\M\to \R$ be bounded and Borel measurable. For $x\in \M$ define
\[ \Psi = \sum_{|x_i-x|\leq \eps} \psi(x_i).\]
Then for any $\eps^2 \leq \delta \leq  1$
\begin{equation}\label{eq:con}
\P\left( \left| \Psi - a\right|\geq C\rho_{max}\|\psi\|_\infty \delta n\eps^m\right) \leq 2\exp\left( -c\rho_{max}\delta^2 n\eps^m \right),
\end{equation}
where $\|\psi\|_\infty=\|\psi\|_{L^\infty(B_\M(x,2\eps))}$ and
\begin{equation}\label{eq:amean}
a = n\int_{B_\M(x,\eps)}\psi(y) \rho(y)\,dVol_\M(y).
\end{equation}
\end{lemma}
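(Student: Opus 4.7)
The plan is to express $\Psi$ as a sum of i.i.d. bounded random variables and apply Bernstein's inequality, handling carefully the small discrepancy between Euclidean balls (which appear in $\Psi$) and geodesic balls (which appear in $a$). Specifically, I would set
\[ Y_i := \psi(x_i)\,\mathbf{1}_{|x_i-x|\leq \eps}, \qquad \Psi = \sum_{i=1}^n Y_i, \]
so that the $Y_i$ are i.i.d., bounded in absolute value by $\|\psi\|_\infty$, and satisfy $n\E Y_1 = n\int \psi(y)\mathbf{1}_{|x-y|\leq \eps}\rho(y)\,dVol_\M(y)$.

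The first key step is to relate $n\E Y_1$ to the quantity $a$ from \eqref{eq:amean}. By \eqref{eq:d}, the symmetric difference between the Euclidean ball $\{y\in\M: |x-y|\leq \eps\}$ and the geodesic ball $B_\M(x,\eps)$ is contained in a thin shell $\{y\in\M: \eps - C\eps^3 \leq d_\M(x,y)\leq \eps + C\eps^3\}$, and by \eqref{eq:vol} this shell has volume at most $C\eps^{m+2}$. Hence
\[ |n\E Y_1 - a|\leq n\|\psi\|_\infty\rho_{max}\cdot C\eps^{m+2} = C\rho_{max}\|\psi\|_\infty\eps^2\cdot n\eps^m, \]
which is absorbed into the right-hand side of \eqref{eq:con} under the standing assumption $\delta\geq \eps^2$.

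Next, I would estimate the second moment $\E Y_1^2\leq \|\psi\|_\infty^2\,\P(|x_1-x|\leq \eps)\leq C\rho_{max}\|\psi\|_\infty^2\eps^m$ using \eqref{eq:vol} to control the volume of a geodesic (and hence Euclidean) ball of radius $\eps$. In particular $\Var(Y_1)\leq C\rho_{max}\|\psi\|_\infty^2\eps^m$.

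The final step is a direct application of Bernstein's inequality \cite{boucheron2013concentration} to $\sum_i(Y_i - \E Y_1)$: with $v = n\Var(Y_1)\leq C\rho_{max}\|\psi\|_\infty^2 n\eps^m$ and uniform bound $\|\psi\|_\infty$, one gets
\[ \P\Bigl(|\Psi - n\E Y_1|\geq t\Bigr)\leq 2\exp\!\left(-\frac{t^2/2}{v + \|\psi\|_\infty t/3}\right). \]
Choosing $t = C\rho_{max}\|\psi\|_\infty\delta n\eps^m$ and using $\delta\leq 1$ so that the sub-Gaussian term dominates the sub-exponential term yields the claimed bound $2\exp(-c\rho_{max}\delta^2 n\eps^m)$. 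Combining with the deterministic estimate $|n\E Y_1 - a|\leq C\rho_{max}\|\psi\|_\infty\eps^2 n\eps^m\leq C\rho_{max}\|\psi\|_\infty\delta n\eps^m$ from the first step, adjusting constants, completes the proof. There is no real obstacle here; the only subtle point is the Euclidean-vs-geodesic ball discrepancy, which is precisely what forces the lower bound $\delta\geq \eps^2$ in the hypothesis.
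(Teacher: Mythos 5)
Your proof is correct and follows essentially the same route as the paper: define the indicator-weighted random variables, apply Bernstein's inequality, control the variance by $C\rho_{max}\|\psi\|_\infty^2\eps^m$, and absorb the $O(n\eps^{m+2})$ discrepancy between the Euclidean-ball mean $n\E Y_1$ and the geodesic-ball quantity $a$ into the deviation parameter via the restriction $\delta\geq\eps^2$. The only cosmetic difference is that the paper phrases the ball discrepancy through the one-sided nested inclusion $B_\M(x,\eps)\subset B(x,\eps)\cap\M\subset B_\M(x,\eps+8\eps^3/R^2)$ rather than a symmetric-difference shell, but the estimates are identical.
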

\begin{remark}
It is important to point out that $a\neq \E[\Psi]$, since the definition of $\Psi$ uses the metric in the ambient space, while in the definition of $a$ we integrate over \emph{geodesic} balls. We also point out that the restriction $\delta\geq \eps^2$ allows us to ignore any effects of the curvature of the manifold.
\end{remark}
\begin{proof}
The argument is similar to  \cite[Lemma 1]{calder2018game}, but we include the proof for completeness.  
Let $Z_i=\one_{\{|x_i-x|\leq \eps\}} \psi(x_i)$. The Bernstein inequality applied to $\sum_{i=1}^n Z_i$ yields
\begin{equation}\label{eq:bernstein}
\P\left( \left|\Psi- \E[\Psi] \right| \geq nt\right) \leq 2\exp\left( - \frac{nt^2}{2(\sigma^2 + \tfrac13 bt)}  \right)
\end{equation}
for any $t>0$, where $\sigma^2 = \text{Var}(Z_i)$, $b>0$ satisfies $|Z_i-\E[Z_i]|\leq b$ almost surely, and 
\[\E[\Psi] = n\int_{B(x,\eps)\cap \M}\psi(y) \rho(y) \, dVol_\M(y).\]
By \eqref{eq:d} we have
\begin{equation}\label{eq:nested}
B_\M(x,\eps) \subset B(x,\eps)\cap \M \subset B_\M(x,\eps + 8\eps^3/R^2) \subset B_\M(x,2\eps)
\end{equation}
provided $8\eps^3/R^2 \leq \eps$, which is guaranteed by the Assumptions \ref{assumptions}. Let us write $\|\psi\|_{\infty}=\|\psi\|_{L^\infty(B_\M(x,2\eps))}$. Then by \eqref{eq:nested} and \eqref{eq:vol} we have
\begin{align}\label{eq:mean}
\left| \E[\Psi] - a\right|&\leq C \rho_{max}\|\psi\|_\infty  n\, Vol_\M(B_\M(x,\eps + 8\eps^3/R^2)\setminus B_\M(x,\eps))\\
&\leq C\rho_{max}\|\psi\|_\infty  n\eps^{m+2},\notag
\end{align}
for $\eps$ sufficiently small. We also have by \eqref{eq:nested} that
\begin{equation}\label{eq:var}
\sigma^2\leq \E[Z_i^2] = \int_{B(x,\eps)\cap \M}\psi(y)^2 \rho(y) \, dVol(y) \leq C\rho_{\max}\|\psi\|^2_\infty \eps^m,
\end{equation}
and
\begin{equation}\label{eq:b}
|Z_i - \E[Z_i]|\leq |Z_i| + |\E[Z_i]| \leq |Z_i| + \E[|Z_i|] \leq 2\|\psi\|_\infty,
\end{equation}
so we can take $b = 2\|\psi\|_\infty$. We now set $t=\rho_{max}\|\psi\|_\infty\eps^m \delta$ in \eqref{eq:bernstein}, for $\delta>0$, and combine this with \eqref{eq:mean} and \eqref{eq:var}  to obtain
\[\P\left( \left|\Psi- \E[\Psi] \right| \geq C\rho_{max}\|\psi\|_\infty (\delta+\eps^2)n \eps^m\right) \leq 2\exp\left( - \frac{c\rho_{max}\delta^2n\eps^m}{1 + \delta}  \right)\]
for constants $C,c>0$ depending only on $\M$. Restriting $\eps^2\leq \delta \leq 1$ completes the proof. 
\end{proof}

\subsection{\texorpdfstring{$\eps$}{epsilon}-graph}
\label{sec:epsLap}

We now turn to proving pointwise consistency of the $\eps$-graph Laplacian. Our main result is the following uniform consistency estimate.
\begin{theorem}[Consistency for $\eps$-graph]\label{thm:con1}
Let $f\in C^3(\M)$.  Then for $\eps \leq \delta \leq \eps^{-1}$ we have
\begin{equation}\label{eq:con1}
\P\left[ \max_{1\leq i\leq n}|\L^\eps f(x_i) - \sigma_\eta \Delta_\rho f(x_i)|\geq C\delta \right]\leq 2n\exp\left(-c\delta^2 n\eps^{m+2}\right),
\end{equation}
where $C$ depends on $\|f\|_{C^3(B_\M(x_i,\eps))}$ and $[f]_{1;B_\M(x_i,2\eps)}$, where $[f]_{1;B_\M(x_i,2\eps)}$ is the Lipschitz constant of the function $f$ when restricted to the ball $B_\M(x_i,2\eps)$.
\end{theorem}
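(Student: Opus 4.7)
Fix $x=x_i$. I would split the error into a variance (or stochastic) part and a bias (or deterministic) part:
\[
\L^\eps f(x)-\sigma_\eta\Delta_\rho f(x) = \underbrace{\bigl(\L^\eps f(x)-\overline{\L}^\eps f(x)\bigr)}_{\text{variance}} + \underbrace{\bigl(\overline{\L}^\eps f(x)-\sigma_\eta\Delta_\rho f(x)\bigr)}_{\text{bias}},
\]
where
\[
\overline{\L}^\eps f(x) := \frac{1}{\eps^{m+2}} \int_{\M}\eta\!\left(\tfrac{|y-x|}{\eps}\right)\bigl(f(x)-f(y)\bigr)\rho(y)\,dVol_\M(y).
\]
For the variance term I would apply Lemma \ref{lem:ch} to $\psi(y)=\eta(|y-x|/\eps)(f(x)-f(y))$. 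Since $\eta$ is supported in $[0,1]$ and $f$ is Lipschitz on $B_\M(x,2\eps)$, we have $\|\psi\|_\infty\le [f]_{1;B_\M(x,2\eps)}\eps\cdot\eta(0)\le C\eps$. The key trick is to apply the lemma with parameter $\delta':=\eps\,\delta$ (so that the assumed range $\eps\le\delta\le\eps^{-1}$ becomes $\eps^2\le\delta'\le 1$, as required by the lemma). This yields, with probability at least $1-2\exp(-c\delta^2 n\eps^{m+2})$,
\[
\bigl|\L^\eps f(x)-\overline{\L}^\eps f(x)\bigr| \;\le\; \frac{C\|\psi\|_\infty\,\delta'\,n\eps^m}{n\eps^{m+2}} \;\le\; \frac{C\eps\cdot\eps\delta\cdot n\eps^m}{n\eps^{m+2}} \;=\; C\delta.
\]

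For the bias term I would pass to geodesic normal coordinates: writing $y=\exp_x v$ on $B_\M(x,\eps+O(\eps^3))\supset\{|y-x|\le\eps\}$ and using the Jacobian bound \eqref{eq:dist} together with \eqref{eq:d} to replace $|y-x|$ by $|v|$, I get
\[
\overline{\L}^\eps f(x) = -\frac{1}{\eps^{m+2}}\int_{T_x\M}\eta(|v|/\eps)\bigl(f(\exp_x v)-f(x)\bigr)\rho(\exp_x v)\,J_x(v)\,dv + O(\eps^2).
\]
A third-order Taylor expansion of $f\circ\exp_x$ and a first-order expansion of $\rho\circ\exp_x$ then makes all odd-in-$v$ terms vanish by symmetry of $\eta(|v|/\eps)$, and the two surviving even terms are evaluated by the moment identity $\int\eta(|v|/\eps)v_iv_j\,dv=\sigma_\eta\eps^{m+2}\delta_{ij}$, producing
\[
\overline{\L}^\eps f(x) = -\sigma_\eta\!\left(\tfrac12\rho(x)\Delta f(x)+\nabla\rho(x)\cdot\nabla f(x)\right) + O(\eps) = \sigma_\eta\Delta_\rho f(x)+O(\eps),
\]
after using $\Delta_\rho f=-\frac{1}{2\rho}\div(\rho^2\nabla f)=-\nabla\rho\cdot\nabla f-\tfrac12\rho\Delta f$. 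The constant in the $O(\eps)$ depends only on $\|f\|_{C^3(B_\M(x,\eps))}$, $\rho$, and the curvature and reach of $\M$.

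Combining the bias $O(\eps)\le C\delta$ (since $\delta\ge\eps$) with the variance estimate $C\delta$, and taking a union bound over $i=1,\dots,n$, yields the stated inequality \eqref{eq:con1} with failure probability at most $2n\exp(-c\delta^2 n\eps^{m+2})$. The main obstacle is bookkeeping for the bias: keeping track of the Jacobian correction $J_x(v)=1+O(K|v|^2)$, the distortion between $|v|$ and $|\exp_x v-x|$ (which passes through the Lipschitz kernel $\eta$), and the third-order Taylor remainder, so that each manifold-curvature contribution is verified to be $O(\eps^2)$ (hence negligible compared to $\delta$) rather than $O(1)$. Everything else is a standard Bernstein-plus-Taylor argument.
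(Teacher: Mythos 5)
Your proof follows essentially the same route as the paper's: the variance--bias decomposition corresponds exactly to the paper's Lemmas \ref{lem:cm1} and \ref{lem:con1}, you invoke the same concentration estimate (Lemma \ref{lem:ch}) with the same reparametrization $\delta'=\eps\delta$, and the bias is handled by the same normal-coordinate Taylor expansion. (One small overclaim: the error from replacing $|x-y|$ by $d_\M(x,y)=|v|$ inside the Lipschitz kernel is $O(\eps)$, not $O(\eps^2)$ --- the $O(\eps^2)$ pointwise kernel error multiplies $|f(x)-f(y)|=O(\eps)$ and, after the $\eps^m$ volume factor cancels against $\eps^{-m-2}$, leaves $O(\eps)$ --- but since your final bias estimate is $O(\eps)\le C\delta$ anyway this does not affect the conclusion.)
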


The proof of Theorem \ref{thm:con1} is split into two lemmas, Lemma \ref{lem:cm1} and Lemma \ref{lem:con1}. Lemma \ref{lem:cm1} controls the fluctuations between the graph Laplacian and the nonlocal operator
\begin{equation}\label{eq:nl}
\L^\eps_{nl} f(x) = \frac{1}{\eps^{m+2}}\int_{B_\M(x,\eps)} \eta\left( \frac{|x-y|}{\eps} \right)(f(x)-f(y)) \rho(y)\, dVol_\M(y).
\end{equation}
Lemma \ref{lem:con1} establishes consistency of the nonlocal operator $\L^\eps_{nl}$ to $\Delta_\rho$ as $\eps \to 0$.

\begin{lemma}\label{lem:cm1}
Let $f\in C^1(\M)$.  Then for $x\in \M$ and $\eps \leq \delta \leq \eps^{-1}$
\begin{equation}\label{eq:nlknn}
\P\left[ |\L^\eps f(x) -  \L^\eps_{nl} f(x)|\geq C[f]_{1;B_\M(x,2\eps)}\delta \right]\leq 2\exp\left(-c\delta^2 n\eps^{m+2}\right).
\end{equation}
\end{lemma}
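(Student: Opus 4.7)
My plan is to recognize the difference $\L^\eps f(x) - \L^\eps_{nl} f(x)$ as a sum-minus-integral fluctuation and then apply the concentration inequality of Lemma \ref{lem:ch} to a well-chosen test function. First I would set
\[
\psi(y) := \eta\!\left(\tfrac{|x-y|}{\eps}\right)(f(x) - f(y)),
\]
which is bounded and Borel measurable on $\M$. Since $\eta$ is supported on $[0,1]$, we have $\psi(x_j)=0$ whenever $|x_j-x|>\eps$, so in the notation of Lemma \ref{lem:ch},
\[
\L^\eps f(x) \;=\; \frac{1}{n\eps^{m+2}}\,\Psi, \qquad \L^\eps_{nl} f(x) \;=\; \frac{1}{n\eps^{m+2}}\,a.
\]
Notice that the geodesic-ball integration in the definition of $a$ matches exactly the one appearing in $\L^\eps_{nl}$, so no extra error appears from the geodesic-versus-Euclidean distinction — this is already absorbed into Lemma \ref{lem:ch}.

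Next I would bound $\|\psi\|_{L^\infty(B_\M(x,2\eps))}$. Since $\eta$ is bounded and supported in $[0,1]$, $\psi(y)$ vanishes outside $\{|x-y|\leq\eps\}$. For $y$ with $|x-y|\leq\eps$, the estimate \eqref{eq:d} gives $d_\M(x,y)\leq|x-y|+8|x-y|^3/R^2\leq 2\eps$, so $y\in B_\M(x,2\eps)$ and the Lipschitz bound on $f$ restricted to that ball yields
\[
|\psi(y)|\;\leq\;\|\eta\|_\infty\,[f]_{1;B_\M(x,2\eps)}\,d_\M(x,y)\;\leq\;C\,[f]_{1;B_\M(x,2\eps)}\,\eps.
\]

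Now I would apply Lemma \ref{lem:ch} with parameter $\delta' := c_0\,\delta\,\eps$ for a sufficiently small constant $c_0$. The range condition $\eps^2\leq\delta'\leq 1$ becomes, after absorbing constants, exactly $\eps\leq\delta\leq\eps^{-1}$, which is the hypothesis of the lemma. The deviation bound in \eqref{eq:con} produces
\[
C\rho_{\max}\|\psi\|_\infty\,\delta'\,n\eps^m \;\leq\; C\rho_{\max}\bigl([f]_{1;B_\M(x,2\eps)}\,\eps\bigr)(c_0\delta\eps)\,n\eps^m \;=\; C'[f]_{1;B_\M(x,2\eps)}\,\delta\,n\eps^{m+2},
\]
with failure probability
\[
2\exp\!\bigl(-c\rho_{\max}(\delta')^2 n\eps^m\bigr) \;\leq\; 2\exp\!\bigl(-c'\delta^2 n\eps^{m+2}\bigr).
\]
Dividing the event $|\Psi-a|\geq C'[f]_1\,\delta\,n\eps^{m+2}$ by the factor $n\eps^{m+2}$ converts it into $|\L^\eps f(x)-\L^\eps_{nl}f(x)|\geq C'[f]_{1;B_\M(x,2\eps)}\,\delta$, which is exactly \eqref{eq:nlknn}.

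I do not anticipate any real obstacle here: the argument is essentially a repackaging of Lemma \ref{lem:ch} with the right choice of test function $\psi$ and a linear rescaling of the deviation parameter. The only step needing some care is the bookkeeping of how the factor $\eps$ in $\|\psi\|_\infty$ (coming from the Lipschitz continuity of $f$ on the small ball) interacts with the rescaling $\delta'=c_0\delta\eps$; this is what produces the improved exponent $\eps^{m+2}$ in both the bound and the concentration rate, compared with the raw $\eps^m$ appearing in Lemma \ref{lem:ch}.
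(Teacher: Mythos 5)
Your proposal is correct and takes essentially the same approach as the paper: both identify the difference as a $\Psi$-minus-$a$ fluctuation, apply Lemma~\ref{lem:ch} to the test function $\eta(|x-\cdot|/\eps)(f(x)-f(\cdot))$ (the paper simply folds the $\tfrac{1}{n\eps^{m+2}}$ prefactor into $\psi$, which is a cosmetic difference), bound $\|\psi\|_\infty$ by Lipschitz continuity of $f$ on $B_\M(x,2\eps)$, and rescale the concentration parameter to $\delta\eps$ to convert $n\eps^m$ into $n\eps^{m+2}$. The bookkeeping of the range condition and the observation that the geodesic-ball definition of $a$ matches $\L^\eps_{nl}$ are both handled exactly as the paper intends.
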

\begin{proof}
 The proof is a direct application of Lemma \ref{lem:ch} with 
\[\psi(y) = \frac{1}{n\eps^{m+2}}\eta\left( \frac{|y-x|}{\eps} \right)(f(x)-f(y)),\]
and $a=\L^\eps_{nl} f(x)$. We simply need to compute
\[\|\psi\|_{L^\infty(B_\M(x,2\eps))} \leq \frac{C[f]_{1;B_\M(x,2\eps)}}{n\eps^{m+1}}.\qedhere\]
\end{proof}

\begin{lemma}\label{lem:con1}
For $f\in C^3(\M)$ and $x\in \M$
\begin{equation}\label{eq:nlcon}
|\L^{\eps}_{nl} f(x) - \sigma_\eta \Delta_\rho f(x)|\leq  C(1+\|f\|_{C^3(B_\M(x,\eps))})\eps.
\end{equation}
\end{lemma}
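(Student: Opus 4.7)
My plan is to work in geodesic normal coordinates centered at $x$ and Taylor-expand everything inside the integral defining $\L^\eps_{nl} f(x)$; this is a purely deterministic calculation. I would parametrize $B_\M(x,\eps)$ via the exponential map, setting $y = \exp_x(v)$ for $v \in B(0,\eps)\subset T_x\M\cong\R^m$. Using \eqref{eq:dist}, the change of variable gives $dVol_\M(y) = J_x(v)\,dv$ with $J_x(v) = 1 + O(K|v|^2)$; using \eqref{eq:d} together with $d_\M(x,y)=|v|$ one has $|y-x| = |v| + O(|v|^3/R^2)$, so by Lipschitz continuity of $\eta|_{[0,1]}$,
\[
\eta\!\left(\frac{|y-x|}{\eps}\right) = \eta\!\left(\frac{|v|}{\eps}\right) + O\!\left(\frac{|v|^3}{\eps}\right).
\]
Each of these substitutions introduces an integrand error bounded by $|v|^2$ or $|v|^3/\eps$ times the already present factor $|f(x)-f(\exp_x v)|\lesssim \|f\|_{C^1}|v|$, so after integrating over $B(0,\eps)$ and dividing by $\eps^{m+2}$ they contribute $O(\eps)$.

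Next I would Taylor-expand, using $f\in C^3$ and $\rho\in C^2$,
\[
f(\exp_x(v)) - f(x) = \langle\nabla f(x),v\rangle + \tfrac{1}{2}\mathrm{Hess}\,f(x)(v,v) + R_f(v),\qquad |R_f(v)|\leq C\|f\|_{C^3}|v|^3,
\]
and $\rho(\exp_x(v)) = \rho(x) + \langle\nabla\rho(x),v\rangle + O(|v|^2)$. Multiplying these out inside the integral and using that the radial weight $\eta(|v|/\eps)$ kills all odd moments in $v$, only two quadratic-in-$v$ terms survive: the cross term $-\langle\nabla f(x),v\rangle\langle\nabla\rho(x),v\rangle$ and the Hessian term $-\tfrac{1}{2}\rho(x)\,\mathrm{Hess}\,f(x)(v,v)$. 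All remaining pieces are either purely odd or of size $|v|^3$ (or worse), yielding $O(\eps)$ after division by $\eps^{m+2}$.

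The key moment computation is that, for any symmetric bilinear form $A$ on $T_x\M\cong \R^m$, the rescaling $v=\eps w$ together with the definition \eqref{def:sigma} of $\sigma_\eta$ gives
\[
\int_{B(0,\eps)} \eta(|v|/\eps)\,\langle Av,v\rangle\,dv = \sigma_\eta\,\eps^{m+2}\,\mathrm{tr}(A).
\]
Applied to the two surviving quadratic terms this yields
\[
\L^\eps_{nl} f(x) = -\sigma_\eta\,\langle\nabla f(x),\nabla\rho(x)\rangle - \tfrac{\sigma_\eta\rho(x)}{2}\,\Delta_\M f(x) + O(\eps),
\]
where $\Delta_\M$ is the Laplace–Beltrami operator (expressed in normal coordinates as $\sum_i \partial_{ii}$ at $x$). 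Finally, expanding the continuum operator via the product rule,
\[
\sigma_\eta\Delta_\rho f(x) = -\frac{\sigma_\eta}{2\rho(x)}\,\mathrm{div}(\rho^2\nabla f)(x) = -\sigma_\eta\langle\nabla\rho(x),\nabla f(x)\rangle - \tfrac{\sigma_\eta\rho(x)}{2}\Delta_\M f(x),
\]
matches the expression above term-by-term, which proves \eqref{eq:nlcon}.

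The only mildly delicate point is bookkeeping the remainders: each of the four error sources (the Jacobian correction $O(|v|^2)$, the distance correction $O(|v|^3/\eps)$, the $O(|v|^2)$ remainder in $\rho$, and the cubic Taylor remainder of $f$) must be checked to produce an integrand of size at most $|v|^3$ or $|v|^4/\eps$ inside $B(0,\eps)$; after integration and division by $\eps^{m+2}$, each contributes $O(\eps)$ with a constant of the form $C(1+\|f\|_{C^3(B_\M(x,\eps))})$, which gives the stated bound.
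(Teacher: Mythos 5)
Your proof is correct and follows essentially the same approach as the paper's: pass to normal coordinates, Taylor-expand $f$, $\rho$, and the Jacobian, use radial symmetry to kill odd moments, and identify the two surviving quadratic terms with $\sigma_\eta\Delta_\rho f$ via the moment identity for $\sigma_\eta$. The only organizational difference is that the paper first replaces $|x-y|$ by $d_\M(x,y)$ in the kernel (introducing an intrinsic nonlocal operator $\L^{i,\eps}_{nl}$ and bounding the difference separately via \eqref{eq:d}), whereas you fold that correction into the same Taylor-expansion step — the content is identical.
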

\begin{proof}
Let us define the intrinsic version of $\L^\eps_{nl}$ to be
\begin{equation}\label{eq:nl2}
\L^{i,\eps}_{nl} f(x) := \frac{1}{\eps^{m+2}}\int_{B_\M(x,\eps)} \eta\left( \frac{d_\M(x,y)}{\eps} \right)(f(x)-f(y)) \rho(y)\, dVol_\M(y).
\end{equation}
Since $\eta$ is Lipschitz, it follows from \eqref{eq:d} that
\begin{equation}\label{eq:error}
|\L^{i,\eps}_{nl} f(x) - \L^{\eps}_{nl} f(x) | \leq C[f]_{1;B_\M(x,\eps)}\eps.
\end{equation}
Let $w(v) = f(\exp_x(v))$ and $p(v) = \rho(\exp_x(v))$; that is $w$ and $p$ are the functions $f$ and $\rho$ expressed in normal Riemannian coordinates. Then we have
\begin{align*}
\L^{i,\eps}_{nl} f(x) &= -\frac{1}{\eps^{m+2}}\int_{B(0,\eps)\subset T_x\M} \eta\left( \frac{|v|}{\eps} \right)(w(v)-w(0)) p(v) J_x(v)\, dv\\
&=- \frac{1}{\eps^{2}}\int_{B(0,1)} \eta\left(|v| \right)(w(\eps v)-w(0)) p(\eps v)J_x(\eps v)\, dv.
\end{align*}
Using the Taylor expansions $J_x(\eps v) = 1+ O(\eps^2)$, $p(\eps v) = p(0) +  \nabla p(0)\cdot v  \eps + O(\eps^2)$, and 
\[w(\eps v) - w(0) = \nabla w(0)\cdot v \eps + \frac{1}{2}v \cdot \nabla^2 w(0) v \eps^2 + O(\|w\|_{C^3(B(0,\eps))}\eps^3),\]
a standard computation yields
\begin{align*}
\L^{i,\eps}_{nl} f(x) = -\sigma_\eta \left( \nabla w(0) \cdot \nabla p(0) + \frac{p(0)}{2}\Delta_\M w(0) \right) + O(c_3\eps)=- \frac{\sigma_\eta}{2p} \div ( p^2 \nabla w )\big\vert_{v=0}+ O(c_3\eps),
\end{align*}
where
\[c_3 = 1+\|w\|_{C^3(B(0,\eps))}.\]
The proof is completed by recalling \eqref{eq:error} and noting that $-\frac{\sigma_\eta}{2p} \div ( p^2 \nabla w )\big\vert_{v=0} = \sigma_\eta \Delta_\rho f(x)$.
\end{proof}

We now give the proof of Theorem \ref{thm:con1}.
\begin{proof}[Proof of Theorem \ref{thm:con1}]
Combining Lemmas \ref{lem:cm1} and \ref{lem:con1} we have 
\begin{equation}\label{eq:partstep}
\P\left[ |\L^\eps f(x) -  \sigma_\eta \Delta_\rho f(x)|\geq C\delta \right]\leq 2\exp\left(-c\delta^2 n\eps^{m+2}\right)
\end{equation}
for any $x\in \M$ and $\eps\leq \delta\leq \eps^{-1}$. Conditioning on $x_i$ and using the law of conditional probability yields
\[\P\left[ |\L^\eps f(x_i) -  \sigma_\eta \Delta_\rho f(x_i)|\geq C\delta \right]\leq 2\exp\left(-c\delta^2 n\eps^{m+2}\right)\]
for any $1\leq i \leq n$. The proof is completed by union bounding over $x_1,\dots,x_n$.
\end{proof}

\subsection{Undirected \texorpdfstring{$k$}-NN graph Laplacian}
\label{sec:sknn}

We now turn to the case of the undirected $k$-NN graph Laplacian $\L^{k}$. The consistency proof here is more involved, since the neighbors in the symmetrized $k$-NN relation do not fall in a ball (even on average), due to the variability of the distribution $\rho$. The additional neighbors added in the symmetrization are in fact important for consistency of the undirected $k$-NN graph Laplacian, and when $\rho$ is not constant the additional points are not symmetrically distributed about $x$. This must be accounted for in the consistency results, and introduces an additional drift term in the limiting Laplace-Beltrami operator. The form of the continuum operator $\Delta_{\rho}^{NN}$ was established in \cite{ting2010analysis}, where the authors proved consistency \emph{without} a convergence rate.

Our main result in this section is the following uniform consistency estimate.
\begin{theorem}[Consistency for undirected $k$-NN Laplacian]\label{thm:sknncon}
Let $f\in C^3(\M)$.  For $1 \leq k \leq cn\eps_\M^m$ and $C(k/n)^{1/m} \leq \delta \leq (k/n)^{-1/m}$ we have
\begin{equation}\label{eq:sknncon}
\P\left( \max_{1 \leq i \leq n}|\L^{k}f(x_i) - \sigma_\eta \Delta^{NN}_\rho f(x_i)| \geq C\delta \right)\leq Cn\exp\left( -c\delta^2(k/n)^{2/m}k\right),
\end{equation}
where $C$ depends on $\|f\|_{C^3(\M)}$.
\end{theorem}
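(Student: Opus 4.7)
\emph{Proof plan.} My plan is to mirror the proof of Theorem \ref{thm:con1}, with one additional preliminary step that replaces the random $k$-NN radii $\eps_k(x_i)$ by deterministic counterparts. Introduce
\[\hat\eps(x) := \left(\frac{k}{n\alpha_m\rho(x)}\right)^{1/m}, \qquad \hat r(x,y) := \max\{\hat\eps(x),\hat\eps(y)\},\]
together with the intermediate operators
\[\L^{k,\ast} f(x) := \frac{1}{n}\left(\frac{n\alpha_m}{k}\right)^{1+2/m}\sum_{j=1}^n w^{\hat r(x_j,x)}_{x_jx}(f(x)-f(x_j)),\]
\[\L^{k,nl} f(x) := \left(\frac{n\alpha_m}{k}\right)^{1+2/m}\int_\M \eta\!\left(\frac{|x-y|}{\hat r(x,y)}\right)(f(x)-f(y))\,\rho(y)\,d\vol_\M(y).\]
I would then split
\[\L^k f - \sigma_\eta\Delta_\rho^{NN}f = (\L^k f - \L^{k,\ast}f) + (\L^{k,\ast}f - \L^{k,nl}f) + (\L^{k,nl}f - \sigma_\eta\Delta_\rho^{NN}f),\]
bound each piece on its own, and close with a union bound over $i=1,\dots,n$.

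For the first piece I would concentrate the $k$-NN radii themselves. Applying Lemma \ref{lem:ch} with $\psi\equiv 1$ at the radii $r=\hat\eps(x_i)(1\pm\delta')$ with $\delta':=c\delta(k/n)^{1/m}$, and using the equivalence $\{N_r(x_i)\ge k\}\Leftrightarrow\{\eps_k(x_i)\le r\}$, yields after a union bound that $\max_i|\eps_k(x_i)-\hat\eps(x_i)|\le \delta'\hat\eps(x_i)$ outside an event of probability $Cn\exp(-c(\delta')^2 k)=Cn\exp(-c\delta^2(k/n)^{2/m}k)$. On this event, the Lipschitz continuity of $\eta$, the bound $|f(x)-f(x_j)|\le C[f]_1(k/n)^{1/m}$ for connected neighbors, and the prefactor $(n\alpha_m/k)^{1+2/m}\asymp \hat\eps(x)^{-(m+2)}$ combine, after a short calculation, to give $|\L^kf(x_i)-\L^{k,\ast}f(x_i)|\le C\delta$. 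For the second piece, $\L^{k,\ast}f(x_i)$ is conditionally on $x_i$ a sum of i.i.d.\ bounded functions of the remaining $x_j$, so I would apply Lemma \ref{lem:ch} directly at scale $\hat\eps(x_i)\asymp(k/n)^{1/m}$ and union bound over $i$ to obtain $\max_i|\L^{k,\ast}f(x_i)-\L^{k,nl}f(x_i)|\le C\delta$ outside an event of the same form $Cn\exp(-c\delta^2(k/n)^{2/m}k)$.

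The third step is the deterministic bound $|\L^{k,nl}f(x)-\sigma_\eta\Delta_\rho^{NN}f(x)|\le C(k/n)^{1/m}$, and it is the main technical piece. I would work in normal coordinates at $x$, set $\tau:=\hat\eps(x)$, write $y=\exp_x(\tau v)$, and use the expansion
\[\hat\eps(\exp_x(\tau v))/\tau = (\rho(x)/\rho(\exp_x(\tau v)))^{1/m} = 1-\tfrac{\tau}{m}\rho(x)^{-1}\nabla\rho(x)\cdot v+O(\tau^2).\]
The condition $|x-y|\le \hat r(x,y)$ then becomes $|v|\le\max\bigl(1,\,1-\tfrac{\tau}{m}\rho(x)^{-1}\nabla\rho(x)\cdot v\bigr)+O(\tau^2)$, which differs from the usual unit ball $\{|v|\le 1\}$ by an asymmetric crescent of thickness $O(\tau)$ supported on the hemisphere $\{\nabla\rho(x)\cdot v<0\}$, and the argument of $\eta$ is likewise perturbed on that hemisphere. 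Combined with Taylor expansions of $f$, $\rho$, and $J_x$, and the identity $(n\alpha_m/k)^{1+2/m}=\rho(x)^{1+2/m}\tau^{-(m+2)}$, the $O(\tau^0)$ terms cancel; the contribution from the symmetric ball $\{|v|\le 1\}$ reproduces the bulk analogue of Lemma \ref{lem:con1} and supplies the diffusion term $-\tfrac{\sigma_\eta}{2}\rho^{-2/m}\Delta_\M f$; and the crescent, paired with the first-order Taylor term of $f$, supplies the extra drift $-\tfrac{\sigma_\eta(1-2/m)}{2}\rho^{-1-2/m}\nabla\rho\cdot\nabla f$ needed to complete $\sigma_\eta\Delta_\rho^{NN}f(x)$. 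I expect this last item to be the main obstacle: one must carefully track the several competing $O(\tau)$ contributions (from the curvature Jacobian $J_x$, the Taylor expansion of $\rho$, the shift in the argument of $\eta$ on the second hemisphere, and the crescent itself) and verify that they combine to produce precisely the coefficient $(1-2/m)$ characteristic of $\Delta_\rho^{NN}$. This asymmetry of the effective neighborhood is what distinguishes the $k$-NN analysis from the $\eps$-graph case treated in Section \ref{sec:epsLap}, where the integration domain is already a geodesic ball up to curvature corrections.
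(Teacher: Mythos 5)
Your plan matches the paper's proof essentially step for step: your operator $\L^{k,\ast}$ is exactly the intermediate operator $L$ the paper constructs inside the proof of Lemma \ref{lem:cmk}, your radius-concentration step corresponds to Lemmas \ref{lem:Ne}--\ref{lem:epscon}, and the normal-coordinate expansion handling the asymmetric crescent and perturbed $\eta$-argument is precisely the computation of Lemma \ref{lem:conknn}, there organized through the change of variables $z = v\bigl(1+\tfrac{\eps}{m}[\nabla\log p(0)\cdot v]_-\bigr)$. One bookkeeping note: the symmetric ball already supplies a drift $-\sigma_\eta\rho^{-1-2/m}\nabla\rho\cdot\nabla f$ (as in Lemma \ref{lem:con1}), while the crescent supplies a counter-drift $+\tfrac{\sigma_\eta(1+2/m)}{2}\rho^{-1-2/m}\nabla\rho\cdot\nabla f$, and it is their partial cancellation that produces the $(1-2/m)/2$ coefficient, not the crescent on its own.
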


The proof of Theorem \ref{thm:sknncon} follow from Lemmas \ref{lem:cmk} and \ref{lem:conknn} below. Before presenting the lemmas and proofs, we must introduce some notation. We recall $r_k(x,y)=\max\{\eps_k(x),\eps_k(y)\}$, as defined in \eqref{eq:rk}. For $x\in \M$ define $\eps(x)$ by
\begin{equation}\label{eq:epsx}
k = \alpha_m\rho(x) n\eps(x)^m,
\end{equation}
and set
\begin{equation}\label{eq:rxy}
r(x,y) := \max\{\eps(x),\eps(y)\}.
\end{equation}

\begin{lemma}\label{lem:Ne}
	Let $x\in \M$ and suppose that $\eps<\eps_\M$. Then for $\eps^2 \leq \delta \leq 1$
	\begin{equation}\label{eq:Nep}
	\P\left[ \left|N_\eps(x) - \alpha_m \rho(x)n\eps^m\right|\geq C\delta n\eps^m  \right]\leq 2\exp\left( -c\delta^2 n\eps^m \right).
	\end{equation}
\end{lemma}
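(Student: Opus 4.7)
The plan is to recognize this as an essentially direct application of Lemma \ref{lem:ch} with the constant weight $\psi \equiv 1$. With that choice, the quantity $\Psi$ appearing in Lemma \ref{lem:ch} counts the number of sample points in the closed Euclidean ball $B(x,\eps)$, which coincides with $N_\eps(x)$ up to an additive error of at most $1$ (depending on whether $x$ itself happens to be a sample point). The concentration inequality \eqref{eq:con} immediately gives $|\Psi - a|\leq C\delta n\eps^m$ with probability at least $1-2\exp(-c\delta^2 n\eps^m)$, where $a = n\int_{B_\M(x,\eps)}\rho(y)\,dVol_\M(y)$. All that remains is to replace the geodesic integral $a$ with the target quantity $\alpha_m \rho(x) n\eps^m$.

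To carry out this replacement, I would pass to normal coordinates centered at $x$. Writing $p(v) = \rho(\exp_x(v))$ and using the smoothness of $\rho$ together with the Rauch-type Jacobian estimate \eqref{eq:dist}, the integrand expands as $p(v)J_x(v) = \rho(x) + \nabla\rho(x)\cdot v + O(|v|^2)$. Integrating over the Euclidean ball $B(0,\eps) \subset T_x\M$, the linear-in-$v$ term vanishes by odd symmetry, so together with the volume estimate \eqref{eq:vol} we obtain
\[
a = \alpha_m \rho(x) n \eps^m + O(n\eps^{m+2}).
\]

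Combining the concentration bound with this expansion and the additive-$1$ discrepancy between $\Psi$ and $N_\eps(x)$ yields
\[
|N_\eps(x) - \alpha_m \rho(x) n\eps^m| \leq C\delta n\eps^m + Cn\eps^{m+2} + 1
\]
on the good event. The hypothesis $\delta \geq \eps^2$ absorbs the $Cn\eps^{m+2}$ term into $C\delta n\eps^m$, and the additive $1$ is negligible in the regime of interest (where $\delta^2 n \eps^m \gtrsim 1$ is needed for the bound to be non-trivial anyway), finishing the argument. There is no real obstacle here; the only minor point to be careful about is the symmetry argument that kills the first-order term in the Taylor expansion of $\rho$, which is what allows the error to be $O(\eps^2)$ rather than $O(\eps)$ and hence absorbable under the assumption $\delta \geq \eps^2$.
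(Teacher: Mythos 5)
Your proposal is correct and follows essentially the same route as the paper's proof: apply Lemma \ref{lem:ch} with $\psi\equiv 1$ and then Taylor-expand the geodesic integral $a$, using $\delta\geq \eps^2$ to absorb the $O(n\eps^{m+2})$ error. You are in fact slightly more careful than the paper in two small respects: you spell out that the first-order term vanishes by odd symmetry (the paper just says ``By Taylor expansion''), and you flag the $+1$ discrepancy arising from the punctured neighborhood in the definition \eqref{eq:Ne} of $N_\eps(x)$, which the paper silently ignores but which, as you note, is immaterial (for fixed $x\in\M$ it occurs with probability zero, and after conditioning on $x=x_i$ the additive $1$ is dominated by $C\delta n\eps^m$ in any regime where the bound is nontrivial).
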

\begin{proof}
	Applying Lemma \ref{lem:ch} with $\psi\equiv 1$ yields
	\[\P\left[ \left|N_{\eps}(x) - n\int_{B_\M(x,\eps)}\rho(y)\,dVol_\M(y)\right|\geq C\delta n\eps^m  \right]\leq 2\exp\left( -c\delta^2 n\eps^m\right).\]
	By Taylor expansion we have
	\[n\int_{B_\M(x,\eps)}\rho(y)\,dVol_\M(y) = \alpha_m\rho(x)n\eps^m + O(n\eps^{m+2}).\]
	Noting that $\eps^2 \leq \delta$ completes the proof.
\end{proof}

\begin{lemma}\label{lem:epscon}
	For $x\in \M$, $1 \leq k \leq cn\eps_\M^m$ and $C(k/n)^{2/m} \leq \delta \leq 1$ we have
	\begin{equation}\label{eq:epscon}
	\P\left[ |\alpha_m \rho(x)n\eps_k(x)^m - k|\geq C \delta k \right]\leq 4\exp\left( -c\delta^2 k  \right).
	\end{equation}
\end{lemma}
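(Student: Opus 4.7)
The plan is to compare the random quantity $\eps_k(x)$ to deterministic radii $\eps_\pm$ defined so that the nonrandom count $\alpha_m\rho(x)n\eps_\pm^m$ equals $(1\pm c\delta)k$, and then to exploit the monotonicity of $\eps\mapsto N_\eps(x)$ together with the concentration estimate in Lemma \ref{lem:Ne}.

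More precisely, I would fix a small constant $c_0>0$ (whose value will come out of the computation) and define $\eps_+$ and $\eps_-$ via
\[\alpha_m\rho(x)n\eps_+^m = (1+c_0\delta)k, \qquad \alpha_m\rho(x)n\eps_-^m = (1-c_0\delta)k.\]
Since $1\leq k\leq cn\eps_\M^m$, both radii satisfy $\eps_\pm \leq \eps_\M$ provided $c$ was chosen small enough, so Lemma \ref{lem:Ne} applies at $\eps_\pm$. Next, from the definitions \eqref{eq:Ne}--\eqref{eq:epsk} one has the elementary equivalences
\[\eps_k(x)\leq \eps_+ \iff N_{\eps_+}(x)\geq k, \qquad \eps_k(x) > \eps_- \iff N_{\eps_-}(x) < k,\]
using that $\eps\mapsto N_\eps(x)$ is a right-continuous, nondecreasing step function.

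The heart of the argument is then to apply Lemma \ref{lem:Ne} twice. For the upper bound I would choose the deviation parameter $\delta'=c_0\delta/(1+c_0\delta)$, for which
\[C\delta' n\eps_+^m = \frac{Cc_0\delta}{1+c_0\delta}\cdot\frac{(1+c_0\delta)k}{\alpha_m\rho(x)} \leq c_0\delta\cdot k,\]
after absorbing constants into $c_0$; the complementary event forces $N_{\eps_+}(x)\geq \alpha_m\rho(x)n\eps_+^m - c_0\delta k = k$, hence $\eps_k(x)\leq\eps_+$. The lower bound is symmetric, giving $N_{\eps_-}(x)<k$ and thus $\eps_k(x)>\eps_-$. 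The probability cost of each application is $2\exp(-c\delta'^2 n\eps_\pm^m) = 2\exp(-c\delta^2 k)$ after simplification, since $n\eps_\pm^m\asymp k/\rho(x)$.

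Before combining, I would verify the precondition $\eps_\pm^2\leq \delta'$ required by Lemma \ref{lem:Ne}: since $\eps_\pm^m\asymp k/n$, one has $\eps_\pm^2\leq C(k/n)^{2/m}$, and the hypothesis $\delta\geq C(k/n)^{2/m}$ yields $\delta'\gtrsim \delta \geq \eps_\pm^2$, so the concentration lemma is indeed in force. Union bounding the two failure events gives the desired $4\exp(-c\delta^2 k)$. Translating the event $\eps_-\leq\eps_k(x)\leq\eps_+$ back via $\alpha_m\rho(x)n\eps_\pm^m = (1\pm c_0\delta)k$ yields $|\alpha_m\rho(x)n\eps_k(x)^m - k|\leq C\delta k$ on this high probability event, which is exactly the conclusion. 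No step here is truly delicate; the only thing to be careful about is that the parameter translations between $\delta$, $\delta'$, and the deviations in $N_{\eps_\pm}(x)$ all preserve the scaling $\delta^2 k$ in the exponent, which is why the constant $c_0$ must be chosen universally before applying Lemma \ref{lem:Ne}.
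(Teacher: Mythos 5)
Your proposal is correct and essentially the same as the paper's argument: both define deterministic radii that inflate (or deflate) the expected count by a factor $1\pm c\delta$, translate the event about $\eps_k(x)$ into one about $N_\eps(x)$ via monotonicity of $\eps\mapsto N_\eps(x)$, and invoke Lemma \ref{lem:Ne} at these radii. The paper writes out only the upper tail with a single deterministic $\eps$ (chosen so that $\alpha_m\rho(x)n\eps^m = k(1+C'\alpha_m^{-1}\rho(x)^{-1}\delta)$) and remarks that the lower tail is symmetric; you handle both with $\eps_\pm$ and are slightly more careful about verifying the precondition $\eps_\pm^2\leq\delta'$ — but the ideas and the bookkeeping are the same.
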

\begin{proof}
	Let $C'$ be the constant from Lemma \ref{lem:Ne} and define $\eps$ by
	\[\alpha_m\rho(x)n\eps^m = k(1+C'\alpha_m^{-1}\rho(x)^{-1}\delta).\]
	Then we have
	\begin{align*}
	\P\left( \alpha_m \rho(x)n\eps_{k}( x )^m>k(1+C\delta) \right)&= \P\left( \eps_k(x) > \eps \right)\\
	&\leq \P\left( N_{\eps}(x) < k \right)\\
	&\leq \P\left( N_{\eps}(x)-\alpha_m\rho(x)n\eps^m < -C'\alpha_m^{-1}\rho(x)^{-1}k\delta  \right)\\
	&\leq \P\left( |N_{\eps}(x)-\alpha_m\rho(x)n\eps^m| \geq C'\delta n\eps^m  \right).
	\end{align*}
	Since $c(k/n)^{1/m} \leq \eps\leq C(k/n)^{1/m}$ the restriction $\eps<\eps_\M$ from Lemma \ref{lem:Ne} is satisfied when $k\leq cn\eps_\M^m$. Therefore, we can invoke Lemma \ref{lem:Ne} to find that
	\[\P\left( n\alpha_m \rho(x)\eps_{k}( x )^m>k(1+C\delta) \right) \leq 2\exp\left( -c\delta^2 k \right).\]
	for $ \eps^2\leq \delta \leq 1$. This establishes one direction of \eqref{eq:epscon}; the proof of the other is similar. 
\end{proof}

We set $\eps_{max}=\max_{x\in \M}\eps(x)$, and define the nonlocal operator
\begin{equation}\label{eq:Lnl}
\L^{k}_{nl}f(x) = \left( \frac{n\alpha_m}{k} \right)^{1+2/m}\int_{B_\M(x,\eps_{max})}\eta\left( \frac{|x-y|}{r(x,y)}\right) (f(x) - f(y))\rho(y)\, d\vol_\M(y).
\end{equation}
Our first lemma shows that the nonlocal operator $\L^{k}_{nl}$ describes the average behavior of the unweighted $k$-nearest neighbor graph Laplacian $\L^{k}$.
\begin{lemma}\label{lem:cmk}
Let $f\in C^1(\M)$.  Then for $x\in \M$ and $C(k/n)^{1/m} \leq \delta\leq (k/n)^{-1/m}$
\begin{equation}\label{eq:glP}
\P\left[ |\L^{k} f(x) -  \L^{k}_{nl} f(x)|\geq C[f]_{1;B_\M(x,2\eps_{max})}\delta \right]\leq C\exp\left( -c\delta^2 (k/n)^{2/m} k \right).
\end{equation}
\end{lemma}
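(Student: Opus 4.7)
The plan is to decompose
$\L^{k}f(x)-\L^{k}_{nl}f(x)$
via the ``deterministic-radius'' intermediary
\begin{equation*}
\bar{\L}^{k}f(x) := \frac{1}{n}\left(\frac{n\alpha_m}{k}\right)^{1+2/m} \sum_{j=1}^{n} \eta\!\left(\frac{|x_j-x|}{r(x_j,x)}\right)\bigl(f(x)-f(x_j)\bigr),
\end{equation*}
where $r(x_j,x)=\max\{\eps(x_j),\eps(x)\}$ uses the deterministic length scale $\eps(\cdot)$ of \eqref{eq:epsx}. Unlike $\L^{k}f(x)$, the summands of $\bar{\L}^{k}f(x)$ are i.i.d.\ functions of $x_j$ (conditionally on the point $x$), which enables a direct Bernstein argument. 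The triangle inequality then reduces the lemma to bounding $|\L^{k}f(x)-\bar{\L}^{k}f(x)|$ and $|\bar{\L}^{k}f(x)-\L^{k}_{nl}f(x)|$ separately.

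The second piece is handled by a Bernstein inequality argument parallel to the proof of Lemma \ref{lem:cm1}. The summand $W_j = \eta(|x_j-x|/r(x_j,x))(f(x)-f(x_j))$ satisfies $|W_j|\leq C[f]_{1;B_\M(x,2\eps_{max})}(k/n)^{1/m}$ with variance $\E W_j^2\leq C[f]_{1;B_\M(x,2\eps_{max})}^2 (k/n)^{1+2/m}$ (the extra $(k/n)$ factor being the $\mu$-measure of the support), and $n\E W_j$ reproduces $\L^{k}_{nl}f(x)$ after multiplying by the prefactor. Bernstein with target deviation $T = C[f]_{1}\delta\, k(k/n)^{2/m}$ then yields $|\bar{\L}^{k}f(x)-\L^{k}_{nl}f(x)|\leq C[f]_{1}\delta$ with probability at least $1-2\exp(-c\delta^2(k/n)^{2/m}k)$; the restriction $\delta\leq(k/n)^{-1/m}$ ensures the variance term dominates the maximum-magnitude term in Bernstein's bound.

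For the first piece, I first invoke Lemma \ref{lem:epscon} with parameter $\delta$ at the point $x$ and at each $x_j$ (using that, conditionally on $x_j$, the remaining $n-1$ samples determine $\eps_k(x_j)$), and union bound to produce an event $G_1$ of probability at least $1-Cn\exp(-c\delta^2 k)$ on which $|\eps_k(y)-\eps(y)|\leq C\delta\,\eps(y)$ uniformly for $y\in\{x,x_1,\ldots,x_n\}$, whence $|r_k(x_j,x)-r(x_j,x)|\leq C\delta\, r(x_j,x)$ for every $j$. Lipschitz continuity of $\eta$ on $[0,1]$ together with the scaling $r,r_k\sim(k/n)^{1/m}$ then yields the pointwise weight-difference bound $|\eta(|x_j-x|/r_k(x_j,x))-\eta(|x_j-x|/r(x_j,x))|\leq C\delta$.

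The technical core, and the main obstacle, is converting this pointwise weight-difference bound into the sharp global bound $|\L^{k}f(x)-\bar{\L}^{k}f(x)|\leq C[f]_1\delta$. A naive term-by-term estimate loses a factor $(n/k)^{1/m}$: the $O(k)$ relevant indices each contribute a weight difference $O(\delta)$ times an increment $|f(x)-f(x_j)|\leq [f]_1|x-x_j|$, and summing $|x-x_j|$ over the neighborhood produces the unwanted factor. To recover the linear-in-$\delta$ rate, one exploits that, on $G_1$, the weight differences $\eta(|x_j-x|/r_k)-\eta(|x_j-x|/r)$ all share the sign of $r_k(x_j,x)-r(x_j,x)$, while $f(x)-f(x_j)\approx-\nabla f(x)\cdot(x_j-x)$ averages to zero over the nearly isotropic neighborhood of $x$. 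Extracting this cancellation is accomplished via a second Bernstein-type argument, conditional on the $\sigma$-algebra generated by $\{\eps_k(x_j)\}_{j}$. Finally, in the scaling regime of interest ($\delta\geq C(k/n)^{1/m}$ and $\log n \ll \delta^2(k/n)^{2/m}k$), the factor $Cn\exp(-c\delta^2k)$ from Lemma \ref{lem:epscon} is absorbed into $C\exp(-c\delta^2(k/n)^{2/m}k)$, giving the stated probability bound.
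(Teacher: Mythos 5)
Your decomposition $\L^k f(x)=\bar{\L}^k f(x)+(\L^k - \bar{\L}^k)$ with $\bar\L^k$ built from the deterministic radius $r(x_j,x)$ is exactly the paper's ($\bar\L^k$ is the paper's $Lf(x)$, modulo restricting the sum to $A(0)=\{j:|x-x_j|\leq r(x_j,x)\}$), and your Bernstein treatment of the second piece $|\bar\L^k-\L^k_{nl}|$ is correct and is what the paper does via Lemma~\ref{lem:ch}. But you have misread the first piece $|\L^k-\bar\L^k|$ as an obstacle requiring cancellation, and the workaround you sketch does not go through.

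The ``lost factor $(n/k)^{1/m}$'' you worry about is an artifact of invoking Lemma~\ref{lem:epscon} with the \emph{target} parameter $\delta$. The paper instead invokes Lemma~\ref{lem:epscon} at the smaller scale $\delta_e:=(k/n)^{1/m}\delta$, which is still admissible since $\delta\geq C(k/n)^{1/m}$ forces $\delta_e\geq C(k/n)^{2/m}$. On the resulting event the weight-difference bound tightens to $O(\delta_e)$ and the boundary annulus $A(C\delta_e)\setminus A(-C\delta_e)$ has $O(\delta_e n\eps^m)$ points (by Chernoff), so the crude term-by-term estimate gives $|\L^k-\bar\L^k|\leq C[f]_1\,\delta_e/\eps(x)\sim C[f]_1\delta$ — no cancellation needed. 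The price is paid in the probability, which becomes $Cn\exp(-c\delta_e^2 k)=Cn\exp\bigl(-c\delta^2(k/n)^{2/m}k\bigr)$, and this is exactly the exponent the lemma claims. You effectively overlooked that the target probability has $(k/n)^{2/m}k$ rather than $k$ in the exponent, which is precisely the room available for rescaling $\delta$ downward in Lemma~\ref{lem:epscon}.

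Beyond being unnecessary, your proposed fix has real problems. Conditioning on $\sigma\bigl(\{\eps_k(x_j)\}_j\bigr)$ does not isolate useful residual randomness: each $\eps_k(x_j)$ is a functional of the full sample $x_1,\dots,x_n$, so this $\sigma$-algebra essentially pins down the configuration, and a ``second Bernstein'' conditional on it has nothing left to concentrate. The claimed sign structure is also not clean: $r_k(x_j,x)-r(x_j,x)=\max\{\eps_k(x_j),\eps_k(x)\}-\max\{\eps(x_j),\eps(x)\}$ can change sign with $j$ depending on which of $\eps_k(x_j),\eps_k(x)$ achieves the max and whether each over- or under-estimates its deterministic counterpart, so the weight differences do not in general share a sign, and the purported cancellation against $-\nabla f(x)\cdot(x_j-x)$ would require substantial additional work to justify. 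The correct move is simply the parameter rescaling above, which makes the ``naive'' term-by-term bound sharp.
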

\begin{proof}
Define
\begin{equation}\label{eq:Ak}
A = \left\{ j \, : \, |x-x_j|\leq r_k(x_j,x) \right\},
\end{equation}
and
\begin{equation}\label{eq:A}
A(s) = \left\{ j \, : \,  |x-x_j|\leq r(x_j,x)(1+s) \right\}.
\end{equation}
By Lemma \ref{lem:epscon} we have
\begin{equation}
\P(|\eps_k(x)^m - \eps(x)^m|\geq C\delta \eps(x)^m) \leq 6\exp(-c\delta^2 k)
\label{eqn:auxrelationeps}
\end{equation}
and
\[\P\left[\max_{1\leq j \leq n}|\eps_k(x_j)^m - \eps(x_j)^m|\geq C\delta \eps(x_j)^m\right] \leq 6n\exp(-c\delta^2 k)\]
for $C(k/n)^{2/m} \leq \delta\leq 1$. Fix such a $\delta>0$ and assume that
\begin{equation}\label{eq:eps1}
|\eps_k(x)^m - \eps(x)^m|\leq C\delta \eps(x)^m
\end{equation}
and
\begin{equation}\label{eq:eps2}
\max_{1\leq j \leq n}|\eps_k(x_j)^m - \eps(x_j)^m|\leq C\delta \eps(x_j)^m.
\end{equation}
Then it follows that $A(-C\delta) \subset A\subset A(C\delta)$. We define  
\[Lf(x)= \frac{1}{n}\left( \frac{n\alpha_m}{k} \right)^{1+2/m}\sum_{j\in A(0)} w_{x_jx}^{r(x_j,x)}(f(x) - f(x_j)),\]
and note that
\begin{align*}
|Lf(x) - \L^{k}f(x)|&\leq \frac{C}{n\eps(x)^{m+2}}\Bigg[ \sum_{j\in A(C\delta)\setminus A(-C\delta)}|f(x_j) - f(x)| \\
&\hspace{1in}+ \sum_{j\in A(-C\delta)}|w^{r(x_j,x)}_{x_j,x} - w^{r_k(x_j,x)}_{x_jx}| |f(x_j)-f(x)| \Bigg].
\end{align*}
By the Chernoff bounds we have
\[\P\left(\# A(C\delta) - \# A(-C\delta)\geq C\delta n\eps(x)^m\right) \leq 2\exp(-c\delta^2 k),\] 
for $0 \leq \delta\leq 1$, and by \eqref{eq:eps1} and \eqref{eq:eps} we have
\[\max_{j\in A(-C\delta)} |w_{x_jx}^{r(x_j,x)} - w_{x_jx}^{r_k(x_j,x)}|\leq C\delta.\]
Setting $\delta=(k/n)^{1/m}t$ we have
\begin{equation}\label{eq:LLk}
\P(|Lf(x) - \L^{k}f(x)| \geq C[f]_{1;B_\M(x,2\eps_{max})}t) \leq C\exp(-ct^2(k/n)^{2/m} k)
\end{equation}
for $C(k/n)^{1/m} \leq t\leq (k/n)^{-1/m}$. The proof is completed by invoking Lemma \ref{lem:ch} to obtain 
\[\P(|\L^{k,s}_{nl}f(x) - Lf(x)| \geq C[f]_{1;B_\M(x,2\eps_{max})}t) \leq 2\exp\left( -ct^2 (k/n)^{2/m} k \right)\]
for $C(k/n)^{1/m} \leq t\leq (k/n)^{-1/m}$, and combining with \eqref{eq:LLk}.
\end{proof}

We now establish consistency of the nonlocal operator $\L^{k}_{nl}$ with the weighted Laplacian $\Delta_{\rho}^{NN}$. 
\begin{lemma}\label{lem:conknn}
There exists $c>0$ depending on $\rho$, so that for $f\in C^3(\M)$, $x\in \M$, and $k\leq cn$, we have
\begin{equation}\label{eq:conknn}
|\L^{k}_{nl}f(x) - \sigma_\eta \Delta_{\rho}^{NN} f(x)| \leq C(1 + \|u\|_{C^3(B_\M(x,\eps_{max}))})\eps(x).
\end{equation}
\end{lemma}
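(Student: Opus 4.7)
The plan is to follow the structure of the proof of Lemma \ref{lem:con1}, with the new ingredient being a careful analysis of the asymmetric kernel $\eta(|v|/\bar r(v))$ produced by the spatially varying bandwidth $\bar r(v) = \max\{\eps(x),\bar\eps(v)\}$. This asymmetry is precisely what produces the extra drift term distinguishing $\Delta_\rho^{NN}$ from $\Delta_\rho$. Writing $E := \eps(x)$ (comparable to $\eps_{max}$ since $\rho_{min}\leq\rho\leq\rho_{max}$), I would first use the Lipschitz continuity of $\eta$, the estimate \eqref{eq:d}, and $r(x,y) \geq E$ to replace $|x-y|$ by $d_\M(x,y)$ in the kernel at the cost of an $O(E)$ error. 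Writing $y = \exp_x(v)$ in normal coordinates, $w = f\circ\exp_x$, $p = \rho\circ\exp_x$, $\bar\eps(v) = E(p(0)/p(v))^{1/m}$, and using the identity $(n\alpha_m/k)^{1+2/m} = E^{-(m+2)}\rho(x)^{-1-2/m}$, I would then split the resulting integral as $A_1 + A_2$, where $A_1$ has the fixed-bandwidth kernel $\eta(|v|/E)$ and $A_2$ is the correction supported on $R^- = \{v : \bar\eps(v) > E\} = \{v: p(v) < p(0)\}$.

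For $A_1$, the computation in Lemma \ref{lem:con1} applies verbatim and gives $A_1 = \rho(x)^{-1-2/m}\sigma_\eta\Delta_\rho f(x) + O(E)$. For $A_2$, the Taylor expansion $\bar\eps(v) = E(1+\delta(v)+O(|v|^2))$ with $\delta(v) = -\nabla p(0)\cdot v/(m\,p(0))$ shows $R^-$ is an $O(|v|^2)$ perturbation of the half-space $H^- = \{v: \nabla p(0)\cdot v < 0\}$, and on $R^-$ the kernel difference expands as $\eta(|v|/\bar\eps(v)) - \eta(|v|/E) = -\eta'(|v|/E)\,|v|\delta(v)/E + O(E)$. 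Scaling $v = Eu$ and Taylor expanding $w(0)-w(Eu) = -E\nabla w(0)\cdot u + O(E^2)$, $p(Eu) = p(0)+O(E)$, and $J_x(Eu) = 1 + O(E^2)$ produces
\[ A_2 = -\frac{\rho(x)^{-1-2/m}}{m}\int_{H^-} \eta'(|u|)\,|u|\,(\nabla\rho(x)\cdot u)(\nabla f(x)\cdot u)\,du + O(E). \]
Using polar coordinates, the radial integral gives $\int_0^\infty s^{m+2}\eta'(s)\,ds = -(m+2)\sigma_\eta/\alpha_m$ (by integration by parts and the definition of $\sigma_\eta$), while the angular integral $\int_{S^{m-1}\cap H^-}(\nabla\rho\cdot\omega)(\nabla f\cdot\omega)\,d\omega = (\alpha_m/2)\nabla\rho(x)\cdot\nabla f(x)$ follows by aligning $e_1$ with $\nabla\rho(x)$ and using odd/even symmetry in the transverse coordinates. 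Multiplying these gives $A_2 = \tfrac{\sigma_\eta(m+2)}{2m\rho(x)^{1+2/m}}\nabla\rho(x)\cdot\nabla f(x) + O(E)$, and the algebraic identity
\[ \rho^{-1-2/m}\sigma_\eta\Delta_\rho f + \frac{\sigma_\eta(m+2)}{2m\rho^{1+2/m}}\nabla\rho\cdot\nabla f = \sigma_\eta \Delta_\rho^{NN}f \]
closes the argument.

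The main technical obstacle is that $\eta$ is only Lipschitz, so $\eta'$ is not defined pointwise and the kernel Taylor expansion above is formal. The cleanest resolution is to prove the result first under the temporary assumption $\eta \in C^1$ (for which everything above is rigorous) and then recover the Lipschitz case by approximating $\eta$ by $C^1$ mollifications $\eta_\epsilon$ with $\eta_\epsilon\to\eta$ uniformly and $\eta_\epsilon'\to\eta'$ in $L^1$, noting that both $\L^k_{nl}f(x)$ and the right-hand side of \eqref{eq:conknn} depend continuously on $\eta$ in this topology. A derivative-free alternative uses the layer-cake identity $\eta(t) = \int_t^\infty (-\eta'(s))\,ds$ together with Fubini to rewrite the kernel difference on $R^-$ as a direct integral over a thin annular level-set of width $O(E|u|\delta(u))$, giving the same leading-order moment computation without differentiating $\eta$. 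A secondary issue --- that the exact region $R^-$ differs from $H^-$ by an $O(|v|^2)$ boundary perturbation --- introduces only an integral over a set of volume $O(E^{m+1})$, contributing $O(E)$ after the prefactor $1/E^{m+2}$.
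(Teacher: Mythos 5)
Your computation is correct --- the split $A_1+A_2$, the moment integrals $\int_0^\infty s^{m+2}\eta'(s)\,ds = -(m+2)\sigma_\eta/\alpha_m$ and $\int_{S^{m-1}\cap H^-}(\nabla\rho\cdot\omega)(\nabla f\cdot\omega)\,d\omega = \tfrac{\alpha_m}{2}\nabla\rho\cdot\nabla f$, and the final algebraic identity combining $\Delta_\rho$ with the drift to produce $\Delta_\rho^{NN}$ all check out. You also correctly identify the obstacle: the kernel is not differentiable, so Taylor-expanding $\eta$ is formal. However, your primary workaround does not close the gap. Under only $\eta\in C^1$, the pointwise Taylor remainder in $\eta(|u|(1-\delta(Eu))) - \eta(|u|) + \eta'(|u|)|u|\delta(Eu)$ is $|u|\,|\delta(Eu)|\cdot\omega_{\eta'}(C E)$ where $\omega_{\eta'}$ is the modulus of continuity of $\eta'$; after integrating and multiplying by the $1/E$ prefactor this contributes only $o(1)$, not the required $O(E)$ --- you need $\eta\in C^{1,1}$ for the Taylor step to be ``rigorous'' with the stated rate. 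Worse, mollifying does not repair this uniformly: $\|\eta_\epsilon''\|_\infty\sim \Lip(\eta)/\epsilon$ blows up, so the constant in the bound for $\eta_\epsilon$ diverges as $\epsilon\to 0$, and passing to the limit transfers no quantitative information. The claim that continuity of both sides in $\eta$ suffices is a category error: continuity passes the two sides to the limit separately, but not a bound with an $\epsilon$-dependent constant. Note also that the admissible $\eta$ include $\mathds{1}_{[0,1]}$ (used explicitly in Section \ref{sec:proofsUnk}), which is not even continuous at $t=1$; there $\eta'$ is a Dirac mass and the ``Taylor expansion'' of the kernel is meaningless pointwise.

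The paper avoids this entirely by never differentiating $\eta$. It makes the change of variables $z=\Phi(v):= v\bigl(1+\tfrac{\eps}{m}[\nabla\log p(0)\cdot v]_-\bigr)$, which maps the level sets of the asymmetric kernel argument onto Euclidean spheres, so the integrand becomes $\eta(|z|)\times(\text{smooth})$ with a Jacobian factor $\det D\Phi$ that absorbs all the first-order asymmetry; $\eta$ is only ever evaluated, never Taylor-expanded, and only the Lipschitz bound on $\eta$ is used (to control the replacement of $|x-y|$ by $d_\M(x,y)$ and the $O(\eps^2)$ error in linearizing $\min\{1,s(\eps v)\}$ inside $\eta$). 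Your layer-cake alternative is the right instinct and is, after Fubini, precisely this change of variables: pushing forward the annular level-set $\{E < |v| \le \bar\eps(v)\}$ onto a spherical shell reproduces the Jacobian term $1-\eps(1+\tfrac1m)[\nabla\log p(0)\cdot v]_-$. If you want to keep your $A_1+A_2$ organization, you should carry out that coarea/change-of-variables step explicitly for $A_2$ rather than falling back on mollification.
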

\begin{proof}
Throughout the proof we write $\eps=\eps(x)$, and note that 
\[c\left( \frac{k}{n} \right)^{1/m}\leq \eps \leq \eps_{\max}\leq C\left( \frac{k}{n} \right)^{1/m}.\]
We first define the intrinsic version of $\L^{k,s}_{nl}$, given by
\begin{equation}\label{eq:Lnli}
Lf(x) = \left( \frac{n\alpha_m}{k} \right)^{1+2/m}\int_{B_\M(x,\eps_{max})}\eta\left( \frac{d(x,y)}{r(x,y)}\right) (f(x) - f(y))\rho(y)\, dVol_\M(y).
\end{equation}
Since $\eta$ is Lipschitz it follows from \eqref{eq:d} that
\begin{equation}\label{eq:LtoLks}
|Lf(x) - \L^{k}_{nl}f(x)| \leq C[f]_{1;B_\M(x,\eps_{max})}(k/n)^{1/m} \leq C[f]_{1;B_\M(x,\eps_{max})}\eps.
\end{equation}
Let $w(v) = f(\exp_x(v))$ and $p(v) = \rho(\exp_x(v))$. Then we have
\begin{align*}
L f(x) &= -\left( \frac{n\alpha_m}{k} \right)^{1+2/m}\int_{B(0,\eps_{max})\subset T_x\M} \eta\left( \frac{|v|}{r(x,\exp_x(v))} \right)(w(v)-w(0)) p(v) J_x(v)\, dv.
\end{align*}
Let us set $s(v) = (\rho(\exp_x(v))/\rho(x))^{1/m}=(p(v)/p(0))^{1/m}$, so that 
\[r(x,\exp_x(v)) = \eps\,\max\{1,s(v)^{-1}\} = \frac{\eps(x)}{\min\{1,s(v)\}}.\]
Then making a change of variables $v'=v/\eps$, and renaming $v'$ as  $v$,  yields
\begin{equation}\label{eq:Lu}
L f(x) =- \left( \frac{n\alpha_m}{k} \right)^{2/m}\int_{B(0,C)} \eta\left(|v|\min\{1,s(\eps v)\} \right)(w(\eps v)-w(0)) \frac{p(\eps v)}{p(0)}J_x(\eps v)\, dv,
\end{equation}
where $C=\eps^{-1}\eps_{max}$.
We now use the Taylor expansions $J_x(\eps v) = 1+ O(\eps^2)$, $p(\eps v) = p(0) +  \nabla p(0)\cdot v  \eps + O(\eps^2)$, 
\[w(\eps v) - w(0) = \nabla w(0)\cdot v \eps + \frac{1}{2}v \cdot \nabla^2 w(0) v \eps^2 + O(\|w\|_{C^3(B(0,\eps_{max}))}\eps^3),\]
and
\[s(\eps v) = 1 + \frac{1}{m}\nabla \log p(0)\cdot v\eps + O(\eps^2),\]
to obtain
\begin{align*}
Lf(x) &=- \left( \frac{n\alpha_m}{k} \right)^{2/m}\int_{B} \eta\left(|v|(1+\tfrac{\eps}{m}[\nabla \log p(0)\cdot v]_- ) \right)(\nabla w(0)\cdot v \eps + \frac{1}{2}v \cdot \nabla^2 w(0) v \eps^2)\\
&\hspace{3.5in}(1 +  \nabla \log p(0)\cdot v  \eps)\, dv + O(\eps),
\end{align*}
where $a_-=\min\{0,a\}$ and 
\[B = \{v\in \R^m\, : \, |v|(1+\tfrac{\eps}{m}[\nabla \log p(0)\cdot v]_-) \leq 1\}.\]
We now make the change of variables
\[z = \Phi(v) := v(1+\tfrac{\eps}{m}[\nabla \log p(0)\cdot v]_- ).\]
For sufficiently small $\eps>0$, $\Phi$ is invertible and
\[v = \Phi^{-1}(z)= z(1-\tfrac{\eps}{m}[\nabla \log p(0)\cdot z]_-  + O(\eps^2)).\]
Note that for $v$ with $\nabla \log p(0)\cdot v>0$, we have $D\Phi(v)=I$ and $\det(D\Phi(v))=1$. For $v$ with $\nabla \log p(0)\cdot v<0$ we have
\[D\Phi(v) = I + \frac{\eps}{m}\left( (\nabla  \log p(0)\cdot v)I + \nabla \log p(0) \otimes v \right).\]
Using the Taylor expansion $\text{det}(I+\eps X) = 1 + \eps \text{Tr}(X) + O(\eps^2)$ we have
\begin{align*}
\det(D\Phi(v)) &= 1 + \frac{\eps}{m}\left((\nabla  \log p(0)\cdot v)m + (\nabla  \log p(0)\cdot v)  \right) +O(\eps^2)\\
&= 1 + \eps(1+\tfrac{1}{m})(\nabla  \log p(0)\cdot v) + O(\eps^2). 
\end{align*}
Thus, for all $v$ we have
\[|\det(D\Phi(v))|^{-1} = 1 - \eps(1+\tfrac{1}{m})[\nabla  \log p(0)\cdot v]_- + O(\eps^2),\]
and so
\[dv = (1 - \eps(1+\tfrac{1}{m})[\nabla  \log p(0)\cdot v]_-+ O(\eps^2))dz.\] 
Therefore
\begin{align*}
&L f(x) \\
&=-\left( \frac{n\alpha_m}{k} \right)^{2/m}\int_{B(0,1)} \eta\left(|z| \right)\left(\nabla w(0)\cdot z \eps - \frac{\eps^2}{m}[\nabla \log p(0)\cdot z]_-\nabla w(0)\cdot z  + \frac{1}{2}x \cdot \nabla^2 w(0) x \eps^2\right)\\
&\hspace{2in}(1 + \eps\nabla \log p(0)\cdot z)\left(1 -\eps(1+\tfrac{1}{m})[\nabla \log p(0)\cdot z]_-\right)\, dz + O(\eps)\\
&=-\left( \frac{n\alpha_m}{k} \right)^{2/m}\int_{B(0,1)} \eta\left(|z| \right)\left(\nabla w(0)\cdot z \eps - \frac{\eps^2}{m}[\nabla \log p(0)\cdot z]_-\nabla w(0)\cdot z  + \frac{1}{2}x \cdot \nabla^2 w(0) x \eps^2\right)\\
&\hspace{2in}\left(1 -\eps(1+\tfrac{1}{m})[\nabla \log p(0)\cdot z]_- + \eps\nabla \log p(0)\cdot z\right)\, dz + O(\eps)\\
&=-p(0)^{-2/m}\int_{B(0,1)} \eta\left(|z| \right)\Big( (\nabla \log p(0)\cdot z)(\nabla w(0)\cdot z) + \frac{1}{2}x \cdot \nabla^2 w(0) x \\
&\hspace{2.5in} - (1+\tfrac{2}{m})[\nabla \log p(0)\cdot z]_-(\nabla w(0)\cdot z) \Big)\, dz + O(\eps)\\
&=-\sigma_\eta p(0)^{-2/m}(\nabla \log p(0)\cdot \nabla w(0)+\tfrac{1}{2}\Delta w(0)) \\
&\hspace{1in}- p(0)^{-2/m}( 1+\tfrac{2}{m})\underbrace{\int_{B(0,1)} \eta\left(|z| \right)[\nabla \log p(0)\cdot z]_-(\nabla w(0)\cdot z)\, dz}_I + O(\eps).
\end{align*}
In the final integral above, let $A$ be an orthogonal transformation so that
\[A\nabla \log p(0) = |\nabla \log p(0)|e_m\]
and make the change of variables $y = Az$ to deduce
\begin{align*}
I&=|\nabla \log p(0)|\int_{B(0,1)} \eta\left(|y| \right)\min\{y_m,0\}(A\nabla w(0)\cdot y)\, dy \\
&=|\nabla \log p(0)|\sum_{i=1}^m [A\nabla w(0)]_i \int_{B(0,1)} \eta\left(|y| \right)\min\{y_m,0\}y_i\, dy \\
&=|\nabla \log p(0)| [A\nabla w(0)]_m \int_{B(0,1)} \eta\left(|y| \right)\min\{y_m,0\}y_m\, dy \\
&=\frac{1}{2}|\nabla \log p(0)| [A\nabla w(0)]_m \int_{B(0,1)} \eta\left(|y| \right)y_m^2\, dy \\
&=\frac{\sigma_\eta}{2}|\nabla \log p(0)| [A\nabla w(0)]_m\\
&=\frac{\sigma_\eta}{2}\nabla \log p(0)\cdot \nabla w(0).
\end{align*}
This gives
\begin{align*}
L f(x) &=- \frac{\sigma_\eta}{2}p(0)^{-2/m}(\Delta w(0) + (1-\tfrac{2}{m}) \nabla \log p(0) \cdot \nabla w(0)) + O(\eps)\\
&= -\frac{\sigma_\eta}{2}\frac{1}{p(0)}\div(p^{1-2/m}\nabla w) + O(\eps) = \sigma_\eta \Delta_{\rho}^{NN} f + O(\eps),
\end{align*}
which completes the proof.
\end{proof}

\section{Proofs of main results}
\label{sec:mainproofs}

Here we prove all of our main results. The structure of the proofs is exactly the same for the $\veps$-graph and the $k$-NN settings.

\subsection{\texorpdfstring{$\veps$}{epsilon}-graph.}
\label{sec:proofsveps}

\nc

Let $\veps, \widetilde \delta, \theta$ be positive numbers satisfying Assumptions \ref{assumptions}. Associated to these numbers we consider the density $\widetilde{\rho}_n$ from Proposition \ref{prop:AuxiliaryDensity} (which exists with probability greater than $1-n\exp( -Cn\theta^2\tilde{\delta}^2 )$) and we let $\widetilde{T} $ be an $\infty$-OT map between $\widetilde{\mu}_n$ and $\mu_n$. Let $\widetilde U_1 , \dots, \widetilde U_n$ be defined by
\[  \widetilde U _i := \widetilde T_n ^{-1}(\{ x_i \}).   \]
We can then define the \emph{contractive discretization} map $\widetilde P\colon L^2( \mu) \to L^2(\mu_n)$ by 
\begin{equation}
(\widetilde Pf)(x_i) \coloneqq n \cdot \int_{\widetilde U_i} f(x) \widetilde p_n(x) dx, \quad f \in  L^2(\mu ),
\label{def:P1}
\end{equation}
and the extension map $\widetilde P^*\colon L^2(\mu_n) \to L^2( \widetilde \mu_n)$ by
\begin{equation}
(\widetilde P^*u)(x) \coloneqq \sum_{i=1}^n u(x_i) \mathds{1}_{\widetilde U_i} (x), \quad u \in L^2(\mu_n).
\label{def:P*1}
\end{equation}
We note that $\widetilde P^*u$ can be written as $\widetilde P^*u = u \circ \widetilde T$. We then define the \textit{interpolation} map $\widetilde \I\colon L^2(\mu_n) \to \Lip(\M)$
\begin{equation}
\widetilde \I u \coloneqq \Lambda_{\veps - 2 \widetilde \delta} P^*u
\label{eqn:InterpolatingOp}
\end{equation}
where $\Lambda_{\veps - 2 \widetilde \delta}$ is a convolution operator using the kernel $K_r(\cdot, \cdot)$ (defined below) with bandwidth $\veps - 2 \widetilde \delta$. To define the kernel $K_r$ we let $\psi \colon [0,\infty) \to [0,\infty)$ be given by
\begin{equation}
  \psi(t) \coloneqq \frac{1}{\sigma_\eta} \int_{t}^{\infty} \eta(s)s  ds,
\label{eqn:psi}
\end{equation}
and set
\[  K_r(x,y) := \frac{1}{r^m} \psi \left( \frac{d_\M(x,y)}{r} \right), \]
The operator $\Lambda_r$ then takes the form
\[   \Lambda_r f(x) := \frac{1}{\tau(x)} \int_{\M} K_{r}(x,y) f(y) d\mu(y),  \]
where in the above $\tau(x)$ is a normalization factor given by
\[  \tau(x) := \int_{\M}  K_{r}(x,y) d\mu(y), \]
and serves as normalization constant. Note that in the above we integrate with respect to the density $\rho$ and not with respect to $\widetilde \rho_n$. This is because $\widetilde \rho_n$ is discontinuous, and for some of the estimates that we will use later on, we need to integrate with respect to a smoother density. 

In order to prove the first part of Theorem \ref{thm:evaluesveps} we use the following two propositions. These two results contain the fundamental a priori estimates that we use in the sequel.

\begin{proposition}[Inequality for Dirichlet energies]
	\label{prop:localnonlocal}
	Let $\veps$, $\widetilde{\delta}$, and $\theta$ be fixed but small enough numbers satisfying Assumptions \ref{assumptions}. Then, with probability greater than $1-  Cn\exp( -Cn \theta^2 \widetilde{\delta}^m )$ we have: \nc
\begin{enumerate}
	\item For any $f \in L^2(\mu)$,
	\[  b_\veps(\widetilde P f ) \leq \left(1+  C\left(\frac{\widetilde \delta}{\veps} + \veps  + \theta \right) \right) \sigma_\eta  D_2(f),\] 
	\item For any $u \in L^2(\mu_n)$,
	 \[  \sigma_\eta D_2(\widetilde {\mathcal{I}} u) \leq \left(1+ C\left(  \frac{\widetilde \delta}{\veps} +  \veps + \theta\right) \right)  b_\veps(u)    ,\]
\end{enumerate}

%
%
	\label{prop:LocalNonLocal}
\end{proposition}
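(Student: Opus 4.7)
The plan is to run the Burago--Ivanov--Kurylev / Trillos--Slepcev energy comparison scheme, but through the auxiliary pair $(\widetilde\rho_n,\widetilde T)$ furnished by Proposition~\ref{prop:AuxiliaryDensity}. Throughout we work on the event of probability at least $1-Cn\exp(-Cn\theta^2\widetilde\delta^m)$ on which $\widetilde\rho_n$ exists, $\widetilde T_{\sharp}\widetilde\mu_n=\mu_n$, $\sup_x d_\M(x,\widetilde T(x))\leq \widetilde\delta$, and $\|\rho-\widetilde\rho_n\|_{L^\infty}\leq C(\theta+\widetilde\delta)$. The three error contributions $\widetilde\delta/\veps$, $\veps$, and $\theta$ will enter respectively through: the transport displacement, the manifold curvature plus Taylor expansion of the convolution kernel, and the $L^\infty$-closeness of $\widetilde\rho_n$ to $\rho$.

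For part (1), I would first use $\widetilde T_\sharp \widetilde\mu_n=\mu_n$ and Jensen's inequality to rewrite
\[
b_\veps(\widetilde Pf)\leq \frac{n^2}{n\veps^{m+2}}\sum_{i,j}\iint_{\widetilde U_i\times \widetilde U_j}\eta\!\left(\tfrac{|\widetilde T(x)-\widetilde T(y)|}{\veps}\right)(f(x)-f(y))^2\widetilde\rho_n(x)\widetilde\rho_n(y)\,dxdy,
\]
and then exploit $|\widetilde T(x)-\widetilde T(y)|\geq |x-y|-2\widetilde\delta$ together with the monotonicity of $\eta$ to replace $\widetilde T(x),\widetilde T(y)$ by $x,y$ at the cost of shrinking the length scale by $2\widetilde\delta$. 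This gives
\[
b_\veps(\widetilde Pf)\leq \frac{1}{\veps^{m+2}}\iint_{\M\times\M}\eta\!\left(\tfrac{\max\{|x-y|-2\widetilde\delta,0\}}{\veps}\right)(f(x)-f(y))^2\widetilde\rho_n(x)\widetilde\rho_n(y)\,dxdy.
\]
Now I would perform the standard nonlocal-to-local estimate: use $\|\rho-\widetilde\rho_n\|_\infty\leq C(\theta+\widetilde\delta)$ to replace $\widetilde\rho_n$ by $\rho$ (producing the $\theta$ error), bound $(f(x)-f(y))^2$ by a line integral $|x-y|\int_0^{|x-y|}|\nabla f(\gamma(s))|^2 ds$ along the minimizing geodesic and exchange order of integration, and finally compare intrinsic and extrinsic distances via \eqref{eq:d} and use the Jacobian expansion \eqref{eq:dist} when parametrizing through $\exp_x$. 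The radial integral produces the factor $\sigma_\eta$ from \eqref{def:sigma}, while the length-scale shift from $\veps$ to $\veps-2\widetilde\delta$ yields the multiplicative factor $(1+C\widetilde\delta/\veps)$ and the curvature and Taylor errors give the $(1+C\veps)$ factor.

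For part (2), I would differentiate $\widetilde{\mathcal I}u=\Lambda_{\veps-2\widetilde\delta}\widetilde P^*u$ directly. The key point of choosing $\psi$ by \eqref{eqn:psi} is that $-\psi'(t)=\sigma_\eta^{-1}t\eta(t)$, which after the chain rule for $d_\M$ gives a pointwise identity of the form
\[
|\nabla\widetilde{\mathcal I}u(x)|^2\leq \frac{1}{\sigma_\eta\tau(x)^2(\veps-2\widetilde\delta)^{m+2}}\left|\int_\M \eta\!\left(\tfrac{d_\M(x,y)}{\veps-2\widetilde\delta}\right)(\widetilde P^*u(y)-\widetilde{\mathcal I}u(x))\,\nabla_x d_\M(x,y)\,d\mu(y)\right|^2,
\]
modulo harmless rearrangements. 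Using $\tau(x)\to\rho(x)$ up to $O(\veps+\theta+\widetilde\delta)$ (via Taylor expansion in normal coordinates and the pointwise comparison of $\rho$ with $\widetilde\rho_n$), Cauchy--Schwarz in the inner integral, and the doubling identity $(a-b)^2 = \sigma_\eta^{-1}\iint \cdots$ built into the definition of $\psi$, I integrate in $x$, swap order, and pass from continuum to the discrete sum through $\widetilde P^*u=u\circ\widetilde T$ which is constant on each $\widetilde U_i$. Enlarging the length scale back from $\veps-2\widetilde\delta$ to $\veps$ (using $|\widetilde T(x)-\widetilde T(y)|\leq |x-y|+2\widetilde\delta$ and monotonicity of $\eta$) produces again a $(1+C\widetilde\delta/\veps)$ multiplier, while the density replacement contributes $\theta$ and curvature contributes $\veps$.

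The hard part is the second inequality: the \emph{self-improving} choice of $\psi$ is what makes the leading constant in front of $b_\veps(u)$ exactly $\sigma_\eta^{-1}$ with only multiplicative first-order errors, rather than an unusable $(1+o(1))$ factor with a wrong constant. Concretely, one needs to verify carefully that after applying Cauchy--Schwarz in the convolution-gradient identity, the remaining radial moment of $\psi$ matches $\sigma_\eta$, and that all three error sources ($\widetilde\delta/\veps$ from shrinking/enlarging the length scale, $\veps$ from curvature and Taylor remainders, $\theta$ from the density comparison) combine \emph{additively} in the multiplicative prefactor and do not contaminate the leading constant. Once that accounting is done, both inequalities follow by inserting the uniform bound $\|\rho-\widetilde\rho_n\|_\infty\leq C(\theta+\widetilde\delta)$ and recognizing the resulting continuum integral as $\sigma_\eta D_2(f)$ (respectively, as $\sigma_\eta D_2(\widetilde{\mathcal I}u)$).
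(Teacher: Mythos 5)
Your proposal follows essentially the same route as the paper's proof: pass from $b_\veps$ through the nonlocal energy $\widetilde E_r$ (transport map + Jensen + the $\pm 2\widetilde\delta$ length-scale shift), swap $\widetilde\rho_n$ for $\rho$ at cost $O(\theta+\widetilde\delta)$ via Proposition~\ref{prop:AuxiliaryDensity}, and then carry out the nonlocal-to-local comparison exploiting $-\psi'(t)=\sigma_\eta^{-1}t\eta(t)$; the paper simply delegates these steps to Lemmas~5, 9, 13, 14 of \cite{trillos2018spectral} and the density-comparison inequality \eqref{lemma:NonlocalIneqs}, whereas you sketch them inline. The substance and the accounting of the three error sources ($\widetilde\delta/\veps$, $\veps$, $\theta$) are identical.
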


\begin{proposition}[Discretization and interpolation maps are almost isometries]
Let $\veps$, $\widetilde{\delta}$, and $\theta$ be fixed but small enough numbers satisfying Assumptions \ref{assumptions}. Then, with probability at least $1- C n \exp (-Cn \theta^2 \widetilde{\delta}^m )$ we have:
	\begin{enumerate}
		\item For every $f \in L^2(\mu)$,
		\[  \left  \lvert   \lVert f  \rVert_{L^2(\mu)} ^2  -  \lVert   \widetilde{P} f  \rVert_{L^2(\mu_n)}^2  \right  \rvert \leq   C \widetilde \delta \lVert f \rVert_{L^2(\mu)} \sqrt{ D(f)}  + C(\theta + \widetilde \delta) \lVert f \rVert^2_{L^2(\mu)}.  \]
		\item For every $u \in L^2(\mu_n)$,
		\[  \left  \lvert  \lVert  u  \rVert_{L^2(\mu_n)}^2   -  \lVert   \widetilde{\mathcal {I}} u  \rVert_{L^2(\mu)}^2   \right \rvert \leq   C \veps \lVert u  \rVert_{L^2(\mu_n)} \sqrt{b_\veps(u)} + C(\theta + \widetilde \delta) \lVert u \rVert^2_{L^2(\mu_n)}.   \]	
	\end{enumerate}
	\label{prop:almostisometries}
\end{proposition}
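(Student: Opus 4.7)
The plan is to prove both parts in parallel, reducing each estimate to a density comparison supplied by Proposition \ref{prop:AuxiliaryDensity} plus an $L^2$ oscillation estimate at the appropriate length scale, namely $\widetilde\delta$ for $\widetilde P$ and $\veps - 2\widetilde\delta$ for $\widetilde{\mathcal I}$. The structural observation that makes the decompositions clean is that $\widetilde P^*\colon L^2(\mu_n) \to L^2(\widetilde\mu_n)$ defined in \eqref{def:P*1} is an isometry onto the subspace of functions that are constant on the partition $\{\widetilde U_i\}$, so that $\widetilde P^*\widetilde P f$ is the $\widetilde\mu_n$-conditional expectation of $f$ given this partition.

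For part (i), I would write
\[\|f\|_{L^2(\mu)}^2 - \|\widetilde P f\|_{L^2(\mu_n)}^2 = \int_\M f^2(\rho - \widetilde\rho_n)\, d\vol_\M + \left(\|f\|_{L^2(\widetilde\mu_n)}^2 - \|\widetilde P^* \widetilde P f\|_{L^2(\widetilde\mu_n)}^2\right).\]
The first summand is controlled by $C(\theta + \widetilde\delta)\|f\|_{L^2(\mu)}^2$ via $\|\rho - \widetilde\rho_n\|_\infty \leq C(\theta+\widetilde\delta)$ from Proposition \ref{prop:AuxiliaryDensity} together with the lower bound $\rho \geq \rho_{min}$. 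Because $\widetilde P^*\widetilde P f$ is a conditional expectation, the second summand equals the cellwise variance $\sum_i \int_{\widetilde U_i}(f - \widetilde P f(x_i))^2 \widetilde\rho_n\, d\vol_\M$. Since each $\widetilde U_i$ has geodesic diameter at most $2\widetilde\delta$, a Poincar\'e-type estimate obtained by integrating $|\nabla f|^2$ along minimizing geodesics between pairs of points in $\widetilde U_i$ gives $\sum_i \int_{\widetilde U_i}(f - \widetilde P f(x_i))^2 \widetilde\rho_n\, d\vol_\M \leq C\widetilde\delta^2 D(f)$. Combining these two estimates via the factorization $|a^2 - b^2| = |a - b|(a + b)$, applied to the $L^2(\widetilde\mu_n)$ norms of $f$ and $\widetilde P^*\widetilde P f$, produces the claimed $C\widetilde\delta\sqrt{D(f)}\|f\|_{L^2(\mu)}$ cross term.

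For part (ii), I would again insert $\widetilde P^* u$ and decompose
\[\|u\|_{L^2(\mu_n)}^2 - \|\widetilde{\mathcal I} u\|_{L^2(\mu)}^2 = \left(\|\widetilde P^* u\|_{L^2(\widetilde\mu_n)}^2 - \|\widetilde P^* u\|_{L^2(\mu)}^2\right) + \left(\|\widetilde P^* u\|_{L^2(\mu)}^2 - \|\Lambda_{\veps - 2\widetilde\delta} \widetilde P^* u\|_{L^2(\mu)}^2\right),\]
using that $\widetilde P^*$ is an isometry into $L^2(\widetilde\mu_n)$. The first bracket is controlled exactly as in part (i). For the second, setting $r = \veps - 2\widetilde\delta$ and applying Jensen's inequality with the probability measure $K_r(x,\cdot)\rho/\tau(x)\, d\vol_\M$ yields
\[\|\widetilde P^* u - \Lambda_r \widetilde P^* u\|_{L^2(\mu)}^2 \leq C \int_\M\int_\M K_r(x,y)\bigl(\widetilde P^*u(x) - \widetilde P^*u(y)\bigr)^2\rho(x)\rho(y)\, d\vol_\M(x)\, d\vol_\M(y).\]
Since $\widetilde P^* u$ is piecewise constant on $\{\widetilde U_i\}$ and $K_r(x,y)>0$ forces $d_\M(x,y)\leq r$ and hence $d_\M(x_i,x_j)\leq r+2\widetilde\delta = \veps$, the specific form of $\psi$ in \eqref{eqn:psi} is engineered so that this double integral is dominated by $C\veps^2 b_\veps(u)$. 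Another application of the difference-of-squares trick then produces the claimed $C\veps\|u\|_{L^2(\mu_n)}\sqrt{b_\veps(u)}$ cross term.

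The main obstacle is the Poincar\'e-type oscillation estimate on the cells $\widetilde U_i$ used in part (i): although each cell has geodesic diameter $O(\widetilde\delta)$, the cells are not convex and may be geometrically irregular, so the classical Euclidean Poincar\'e inequality does not apply directly. The workaround is to bound pairwise differences $(f(x)-f(y))^2$ by line integrals of $|\nabla f|^2$ along minimizing geodesics between points of $\widetilde U_i$, and then carefully control the combinatorial factor that arises when these line integrals are averaged against $\widetilde\rho_n\otimes\widetilde\rho_n$. The analogous algebraic fact behind part (ii), that the specific choice $\psi(t) = \frac{1}{\sigma_\eta}\int_t^\infty \eta(s) s\, ds$ from \eqref{eqn:psi} makes the nonlocal energy with kernel $K_r$ at bandwidth $r$ comparable to the graph Dirichlet energy $b_\veps(u)$ at the slightly larger bandwidth $\veps = r + 2\widetilde\delta$, is the same compatibility already exploited in Proposition \ref{prop:LocalNonLocal} and is routine once this matching between bandwidths is set up.
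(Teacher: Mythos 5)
Your overall decomposition strategy matches the paper: split the error into a density-comparison term (controlled by $\lVert \rho - \widetilde\rho_n\rVert_\infty \leq C(\theta+\widetilde\delta)$) plus an oscillation term at the appropriate length scale, and exploit that $\widetilde P^*$ is an isometry from $L^2(\mu_n)$ onto the piecewise-constant subspace of $L^2(\widetilde\mu_n)$. Your observation that $\widetilde P^*\widetilde P$ is the $\widetilde\mu_n$-conditional expectation onto the partition $\{\widetilde U_i\}$ is a nice structural way to see the contractivity $\lVert\widetilde P^*\widetilde P f\rVert_{L^2(\widetilde\mu_n)} \leq \lVert f\rVert_{L^2(\widetilde\mu_n)}$, which the paper uses implicitly via Cauchy--Schwarz, and your Jensen argument for part (ii) reproduces what the paper gets by citing Lemma 8 of \cite{trillos2018spectral}. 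Part (ii) is therefore essentially the paper's proof.

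The genuine gap is in the oscillation estimate $\sum_i \int_{\widetilde U_i}(f - \widetilde P f(x_i))^2 \widetilde\rho_n\, d\vol_\M \leq C\widetilde\delta^2 D(f)$, which you correctly flag as the main obstacle but do not actually resolve. The cells $\widetilde U_i = \widetilde T^{-1}(\{x_i\})$ are preimages under an $\infty$-optimal transport map, and a priori they are only measurable sets of diameter $O(\widetilde\delta)$ with no regularity whatsoever: they can be disconnected, fractal-like, or have arbitrarily degenerate aspect ratio. Your proposed fix---bounding $(f(x)-f(y))^2$ by a line integral of $|\nabla f|^2$ along the minimizing geodesic from $x$ to $y$ and then ``carefully controlling the combinatorial factor''---is exactly the step that fails for such sets. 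The geodesic exits $\widetilde U_i$, so you are forced to integrate $|\nabla f|^2$ over a neighborhood of $\widetilde U_i$ rather than over $\widetilde U_i$ itself, and the change of variables that underlies the Riesz-potential proof of the Poincar\'e inequality produces a Jacobian that is only controlled when the domain has some convexity or star-shapedness. There is no obvious bound on the ``combinatorial factor'' for generic cells; you have named the difficulty without showing a way past it.

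The paper resolves this with a different device, borrowed from Lemma 3.4 of \cite{BIK}: for $x,y\in\widetilde U_i$, introduce a \emph{third} point $z$ ranging over $W := \M\cap B(x,2\widetilde\delta)\cap B(y,2\widetilde\delta)$, use $|f(x)-f(y)|^2 \leq 2|f(x)-f(z)|^2 + 2|f(y)-f(z)|^2$, and average over $z\in W$. Because $x,y$ lie in a common cell of diameter $\leq 2\widetilde\delta$, the set $W$ has volume bounded below by $c\widetilde\delta^m$ uniformly, so the average produces a kernel factor $\widetilde\delta^{-m}\eta(|x-z|/(2\widetilde\delta))$. Integrating then yields a bound by a nonlocal energy $\widetilde E_{2\widetilde\delta}(f)$ at scale $2\widetilde\delta$, which is subsequently controlled by $D_2(f)$ via the continuum comparison lemma (Lemma 5 of \cite{trillos2018spectral}, as used in the proof of Proposition \ref{prop:LocalNonLocal}). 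This averaging argument is insensitive to the geometry of the individual cells---all it uses is the diameter bound---and is the right tool here. Your proof would need to replace the line-integral sketch with this third-point trick (or an equivalent device) to be complete.

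One smaller point: you invoke both the Pythagorean identity (giving the second summand exactly as the cellwise variance, i.e.\ a bound of the form $C\widetilde\delta^2 D(f)$) and the factorization $|a^2-b^2|=(a+b)|a-b|$ (giving the stated bound $C\widetilde\delta\lVert f\rVert_{L^2(\mu)}\sqrt{D(f)}$). These are consistent but redundant; the Pythagorean route together with $\lVert f - \widetilde P^*\widetilde P f\rVert_{L^2(\widetilde\mu_n)} \leq \lVert f\rVert_{L^2(\widetilde\mu_n)}$ already yields the stated form, and the paper uses only the factorization. Worth tidying up, but not an error.
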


Before proving these two propositions it will be convenient to introduce two intermediate (non-local) Dirichlet energies of interest and establish a connection between them. First, we define the \textit{non-local} energy
\begin{equation}
\label{eqn:ErfV1}
\widetilde E_r(f):=\int_\M \int_\M \frac{1}{r^{m+2}} \eta \left( \frac{d_\M(x,y)}{r} \right)(f(x) - f(y))^2  \widetilde \rho(x) \widetilde \rho_n(y) d\vol_\M(x) d\vol_\M(y), \quad f \in L^2(\M, \mu),
\end{equation} 
and the closely related
\begin{equation}
\label{eqn:ErfV}
 E_r(f):=\int_\M \int_\M \frac{1}{r^{m+2}} \eta \left( \frac{d_\M(x,y)}{r} \right)(f(x) - f(y))^2  \rho(x) \rho(y) d\vol_\M(x) d\vol_\M(y), \quad f \in L^2(\M, \mu).
\end{equation} 
Notice that the only difference between $\widetilde E _r$ and $E_r$ is the density we integrate with respect to. Moreover, for the density $\widetilde \rho_n$ from Proposition \ref{prop:AuxiliaryDensity} we have 
\begin{equation}
  (1- C(\theta + \widetilde \delta)) E_r(f)    \leq  \widetilde E_r(f) \leq (1+  C(\theta + \widetilde {\delta}) ) E_r (f), \quad \forall f \in L^2(\mu), 
\label{lemma:NonlocalIneqs}
\end{equation}
for some constant $C$, which follows from the fact that
\[  \rho(x) \leq \widetilde \rho_n (x) + \lVert \rho - \widetilde \rho_n  \rVert_{L^\infty(\mu)} \leq \widetilde{\rho}_{n}(x)  + \frac{C(\theta + \widetilde \delta)}{\rho_{min}} \widetilde \rho_n(x)  \]
and
\[ \widetilde \rho_n (x) \leq \rho (x) + \lVert \rho - \widetilde \rho_n  \rVert_{L^\infty(\mu)} \leq \rho(x) +   \frac{C(\theta + \widetilde \delta)}{\rho_{min}}  \rho(x).\]

We are now ready to prove Proposition \ref{prop:LocalNonLocal} and Proposition \ref{prop:almostisometries}.

\begin{proof}[Proof of Proposition \ref{prop:LocalNonLocal}]
First, we can use Lemma 5 in \cite{trillos2018spectral} to conclude that for all $r>0$ small enough and all $f \in L^2(\mu)$, 
\[  E_r(f) \leq  (1+ C_{\rho} r  + Cm K  r^2 ) \sigma_\eta  D_2(f),\]
and Lemma 9 in \cite{trillos2018spectral} to get
\[ \sigma_\eta D_2(\Lambda_r f) \leq (1+ C r + Cm K r^2 )\cdot (1+ C(1+ 1/\sigma_\eta)mK r^2  )    E_{r}(f). \]
We can then use \ref{prop:AuxiliaryDensity} to obtain
\begin{equation} 
 \widetilde E_r(f) \leq (1+ C r + Cm K  r^2 )(1+  C(\theta + \widetilde \delta ) ) \sigma_\eta  D_2(f),
 \label{aux:eqn51}
 \end{equation}
as well as
\begin{equation}
 \sigma_\eta D_2(\Lambda_r f) \leq (1+ C r + Cm K r^2 )\cdot (1+ C(1+ 1/\sigma_\eta)mK r^2  ) (1+ C(\theta + \widetilde \delta)) \widetilde E_{r}(f). 
 \label{aux:eqn52}
 \end{equation}

 On the other hand, if we replace $P, P^*, \delta, T_n$ and $\rho$ with $\widetilde P , \widetilde P^* , \widetilde \delta , \widetilde T _n$ and $\widetilde \rho _n$,  (where recall $\widetilde \delta$ is an upper bound for the $\infty$-OT distance between $\widetilde \mu_n$ and $\mu_n$), we can copy word for word the proofs in Lemmas 13 and 14 in \cite{trillos2018spectral} to deduce
 \[  b_\veps(\widetilde P  f) \leq \left(1+ C\frac{\widetilde \delta }{\veps} \right)  \widetilde E_{\veps + 2\widetilde \delta}(f)\] 
 as well as
 \begin{equation}
 \widetilde E_{\veps - 2 \widetilde \delta}(\widetilde P^* u)   \leq \left(1+ C\frac{\widetilde \delta}{\veps}\right) b_\veps(u)   
\label{NonLocal-Discrete}
\end{equation}
for every $f \in L^2(\mu)$ and $u \in L^2(\mu_n)$. Putting together the above estimates with \eqref{aux:eqn51} and \eqref{aux:eqn52} with $r=\veps - 2 \widetilde \delta$, and using the smallness Assumptions \eqref{assumptions} on $\veps , \widetilde \delta$ and $\theta$, we obtain the desired inequalities. 
\end{proof}

\begin{proof}[Proof of Proposition \ref{prop:almostisometries}]
	
	The proof follows the same ideas used when proving the analogous results in \cite{trillos2018spectral}. The setting is slightly different because the discretization and interpolation maps have changed.

	To prove the first assertion, we start by noticing that
	\[ \lVert   f \rVert_{L^2(\mu)}^2 = \int_{\M} \left(  f (x) \right)^2 \frac{\rho(x) - \widetilde \rho _n(x)}{\rho(x)} \rho(x) d\vol_\M(x)   +  \lVert  f \rVert_{L^2(\widetilde \mu_n)}^2.   \]
	The first term on the right hand side is in absolute value less than 
	\[  \frac{\lVert \rho - \widetilde \rho_n \rVert_{L^\infty (\mu)}}{\rho_{min}} \lVert   f \lVert^2_{L^2(\mu)}, \]
	and so
	\begin{equation}
	\left \lvert \lVert   f \rVert_{L^2(\widetilde \mu_n)}^2 - \lVert   f \rVert_{L^2(\mu)}^2  \right \rvert \leq C(\theta + \widetilde \delta) \lVert f \rVert^2_{L^2(\mu)}.    
	\label{ineq:intrhontilderhon}
	\end{equation}
	Notice also that by definition of $\widetilde P^* $ we have
	\[ \lVert  \widetilde P^* \widetilde P f \rVert^2_{L^2(\widetilde \mu_n)} = \lVert  \widetilde P f \rVert^2_{L^2(\mu_n)}. \]
	It follows that,
	\begin{align*}
	\left \lvert  \lVert  \widetilde P f \rVert_{L^2(\mu_n)}^2  - \lVert f  \rVert^2_{L^2(\mu)} \right \rvert  & \leq    \left \lvert  \lVert  \widetilde P^* \widetilde P f \rVert_{L^2(\widetilde \mu_n)}^2  - \lVert f  \rVert^2_{L^2(\widetilde \mu_n)} \right \rvert +   \left \lvert  \lVert f  \rVert^2_{L^2(\widetilde \mu_n)}  - \lVert f  \rVert^2_{L^2(\mu)}  \right \rvert 
	\\& \leq C \lVert f \rVert_{L^2(\mu)} \left \lvert \lVert \widetilde P ^* \widetilde P f     \rVert_{L^2(\widetilde \mu_n)}  - \lVert f \rVert_{L^2(\widetilde \mu_n)}  \right \rvert + C(\theta + \widetilde \delta)\lVert f \rVert^2_{L^2(\mu)}
	\\& \leq  C \lVert f \rVert_{L^2(\mu)}  \lVert \widetilde P ^* \widetilde P f  - f  \rVert_{L^2(\widetilde \mu_n)}  + C(\theta + \widetilde \delta)\lVert f \rVert^2_{L^2(\mu)}.
	\end{align*}
	On the other hand,
	\begin{align*}
	\begin{split}
	\lVert \widetilde P ^* \widetilde P f  - f \rVert_{L^2(\widetilde \mu_n)}^2 &= \sum_{i=1}^n \int_{\widetilde U_i} \left( \widetilde P ^* \widetilde P f (x) - f(x)   \right)^2 \widetilde \rho_n(x) d\vol_\M(x)
	\\&= \sum_{i=1}^n \int_{\widetilde U_i} \left( \widetilde P f (x_i) - f(x)   \right)^2 \widetilde \rho_n(x) d\vol_\M(x)  
	\\&= \sum_{i=1}^n \int_{\widetilde U_i} \left( n \int_{\widetilde U_i} f(y) \widetilde \rho_n(y) d\vol_\M(y)   - f(x)   \right)^2 \widetilde \rho_n(x) d\vol_\M(x)+
	\\&=  \sum_{i=1}^n \int_{\widetilde U_i} \left( n \int_{\widetilde U_i} (f(y)-f(x)) \widetilde \rho_n(y) d\vol_\M(y) \right)^2 \widetilde \rho_n(x) d\vol_\M(x)
	\\& \leq n\sum_{i=1}^n \int_{\widetilde U _i} \int_{\widetilde U _i} (f(y) - f(x))^2 \widetilde \rho_n(x) \widetilde \rho_n(y) d\vol_\M(y) d\vol_\M(x).
	\end{split}
	\end{align*}
	Let us now show that the last term in the above chain of inequalities can be controlled by $C \widetilde \delta^2 E_{2 \widetilde \delta } (f)$, where actually $C$ is a constant that only depends on dimension. For this purpose we use an idea described in Lemma 3.4 in \cite{BIK} to estimate each of the terms
	\[ n\int_{\widetilde U _i} \int_{\widetilde U _i} (f(y) - f(x))^2 \widetilde \rho_n(x) \widetilde \rho_n(y) d\vol_\M(y) d\vol_\M(x).\]

	For fixed $x, y \in \widetilde U_i$ let $W := \M \cap B(x , 2\widetilde \delta) \cap B(y, 2\widetilde \delta) $, where $B(x,2\widetilde \delta)$ is the Euclidean ball of radius $2\widetilde \delta$ centered at $x$. For every  $z \in W$ we have
	\[ |f(x) - f(y)|^2 \leq 2 |f(x) - f(z)|^2 + 2 |f(y)- f(z)|^2,  \] 
	and in particular 
	\begin{align*}
	|f(x) - f(y)|^2 & \leq 2 \frac{1}{\vol_\M(W)} \int_{W}|f(x) - f(z)|^2 d\vol_\M(z) + 2 \frac{1}{\vol_\M(W)} \int_{W}|f(y) - f(z)|^2 d\vol_\M(z)
	\\& \leq  \frac{2}{\vol_\M(W)} \int_{\M} \eta \left(\frac{x - z}{2\widetilde \delta} \right)|f(x) - f(z)|^2 d\vol_\M(z)  
	\\&+ \frac{2}{\vol_\M(W)} \int_{\M} \eta \left(\frac{y - z}{2\widetilde \delta} \right)|f(y) - f(z)|^2 d\vol_\M(z)
	\\& \leq  \frac{C}{\widetilde \delta^m} \int_{\M} \eta \left(\frac{x - z}{2\widetilde \delta} \right)|f(x) - f(z)|^2 d\vol_\M(z)  + \frac{C}{\widetilde \delta^m} \int_{\M} \eta \left(\frac{y - z}{2\widetilde \delta} \right)|f(y) - f(z)|^2 d\vol_\M(z).
	\end{align*}
 Integrating with respect to $x$ and $y$ in both sides of the inequality we get
 \begin{align*}
&	n  \int_{\widetilde U _i} \int_{\widetilde U_i }  |f(x) - f(y)|^2 \widetilde \rho_n(x) \widetilde \rho_n(y) d\vol_\M(x) d\vol_\M(y) \\ &\leq    \frac{2C}{\widetilde \delta^m} \int_{\widetilde U_i} \int_\M \eta \left( \frac{|x-z|}{2 \widetilde \delta} \right) | f(x) - f(z)|^2 \widetilde \rho_n(x) \widetilde \rho_n(z) d\vol_\M(x) d\vol_\M(z).	   
\end{align*}
	Summing over all $i=1, \dots, n$ we deduce from the above expression that
	\[ \lVert \widetilde P ^* \widetilde P f  - f \rVert_{L^2(\widetilde \mu_n)}^2 \leq C \widetilde \delta^2 \widetilde E_{2\widetilde \delta }(f) \leq  C \widetilde \delta^2 D_2(f)  ,  \]
	where for the last inequality we have used the same arguments as in the proof of Proposition \ref{prop:LocalNonLocal}. 
	
	To show the second assertion we notice that from $\lVert  u \rVert_{L^2(\mu_n)}=  \lVert \widetilde P^* u \rVert_{L^2(\widetilde \mu_n)}$ and the triangle inequality we get
	\begin{align*}
	\begin{split}
	\left \lvert \lVert \widetilde \I u \rVert _{L^2(\widetilde \mu _n)}   -  \lVert u \rVert _{L^2(\mu_n)}   \right \rvert & \leq   \lVert \Lambda_{\veps - 2\widetilde \delta} \widetilde P^* u  - \widetilde P^* u    \rVert _{L^2(\widetilde \mu _n)} 
	\\&  \leq \left( 1+ \frac{\lVert \rho - \widetilde \rho_n \rVert_\infty}{\rho_{min}} \right) \cdot \lVert \Lambda_{\veps - 2\widetilde \delta} \widetilde P^* u  - \widetilde P^* u    \rVert _{L^2(\mu)} 
	\\&  \leq  \left( 1+ \frac{\lVert \rho - \widetilde \rho_n \rVert_\infty}{\rho_{min}} \right) \cdot C \veps  \sqrt{ E_{\veps - 2 \widetilde \delta}( \widetilde P^* u) }
	\\& \leq  C  \veps \sqrt{b_\veps(u)},
	\end{split}
	\end{align*}
	where for the third inequality we have used Lemma 8 in \cite{trillos2018spectral} (notice that our definition of $E_r$ has an extra factor of $r^{-2}$ when compared to the definition in \cite{trillos2018spectral}), and for the last one we have used Lemma \ref{lemma:NonlocalIneqs} and \eqref{NonLocal-Discrete}. Also, notice that
	\[   \lVert \widetilde I   u  \rVert_{L^2(\widetilde \mu_n)} = \lVert \Lambda_{r} \widetilde P^* u  \rVert_{L^2(\widetilde \mu_n)} \leq  C \lVert \widetilde P^* u \rVert_{L^2(\widetilde \mu_n)} = C \lVert u \rVert_{L^2(\mu_n)},   \]
	where the inequality follows from Lemma 8 in \cite{trillos2018spectral} and the fact that by introducing a multiplicative constant we can change integrals with respect to $\rho$ with integrals with respect to $\widetilde \rho_n$ and vice-versa (according to Proposition \eqref{prop:AuxiliaryDensity}). Therefore, 
	\begin{align*}
	\begin{split}
	\left \lvert \lVert \widetilde \I u \rVert^2 _{L^2(\widetilde \mu _n)}   -  \lVert u \rVert^2_{L^2(\mu_n)} \right \rvert & \leq \left \lvert \lVert \widetilde \I u \rVert _{L^2(\widetilde \mu _n)}   -  \lVert u \rVert _{L^2(\mu_n)} \right \rvert  \left( \lVert \widetilde \I u \rVert _{L^2(\widetilde \mu _n)}   +  \lVert u \rVert _{L^2(\mu_n)}   \right)
	\\& \leq \left \lvert \lVert \widetilde \I u \rVert _{L^2(\widetilde \mu _n)}   -  \lVert u \rVert _{L^2(\mu_n)} \right \rvert  \left( \lVert  \widetilde \I u \rVert _{L^2(\widetilde \mu _n)}   +  \lVert u \rVert _{L^2(\mu_n)}   \right)
	\\& \leq C \veps \sqrt{b_\veps(u)}  \lVert u \rVert_{L^2(\mu_n)}.
	\end{split}
	\end{align*}
	Finally, we use \eqref{ineq:intrhontilderhon} to compare $\lVert \widetilde \I u \rVert^2 _{L^2(\widetilde \mu _n)} $ and $\lVert \widetilde \I u \rVert^2 _{L^2( \mu)} $.

\end{proof}

With Proposition \ref{prop:LocalNonLocal} and \ref{prop:almostisometries} in hand the proof of the first part of Theorem \ref{thm:evaluesveps} now follows from standard arguments.

\begin{proof}[Proof of 1) in Theorem \ref{thm:evaluesveps}]
Let $f_1, \dots, f_l$ be an orthonormal set (in $L^2(\mu)$) of eigenfunctions of $\Delta_\rho$ corresponding to its first $l$ eigenvalues. For $i=1, \dots, l$ let
\[  v_i :=  \widetilde P  f_i.\]
Applying the first part of Proposition \ref{prop:almostisometries} to functions $f$ of the form:
\[ f:= f_i - f_j \] 
we can get the bound, 
\[  \lvert \langle f_i, f_j\rangle_{L^2(\mu)} - \langle v_i, v_j\rangle_{L^2(\mu_n)} \rvert   \leq C \sqrt{\lambda_l}  \widetilde \delta  + C(\theta + \widetilde \delta )  \leq \frac{1}{2l},  \]
where the last inequality is valid thanks to our smallness assumption for $\widetilde \delta$ and $\theta$. Since $\langle  f_i , f_j \rangle_{L^2(\mu)}= \delta_{ij}$, the above inequality implies that the vectors $v_1, \dots, v_l $ are linearly independent, and so the subspace $S:=  \text{Span}\{ v_1, \dots
, v_l \}$ has dimension $l$. We can use \eqref{eqn:varcharacveps} to conclude that 
\[  \lambda_l^\veps \leq  \frac{1}{2} \max_{\substack{v \in  S \\ \lVert v\rVert_{L^2(\mu_n)}=1}}   b_\veps(v). \]
Now, each element $v$ of $S$ is of the form 
\[ v = \sum_{i=1}^l a_i \widetilde P f_i,   \]
for some coefficients $a_i$, so that 
\[ v = \widetilde P( \sum_{i=1}^l a_i  f_i) =: \widetilde P(f), \]
and in particular,
\[ \frac{1}{2}D_2(f) = \langle  \Delta_{\rho} f , f \rangle_{L^2(\mu)} \leq \lambda_l \lVert f \rVert^2_{L^2(\mu)}. \]
By Proposition \ref{prop:localnonlocal} we know that,
\[\frac{1}{2} b_\veps(v) = \frac{1}{2} b_\veps(\widetilde P f ) \leq (1+  C\frac{\widetilde \delta}{\veps} + C\veps  + C\theta + C\widetilde \delta) \frac{\sigma_\eta}{2}  D(f) \leq  (1+ C\frac{\widetilde \delta}{\veps} + C\veps  + C\theta + C\widetilde \delta)  \sigma_\eta \lambda_l \lVert f \rVert_{L^2(\mu)}^2.\]
If $\lVert v\rVert_{L^2(\mu_n)}=1$, Proposition \ref{prop:almostisometries} implies 
\[ \lVert f \rVert_{L^2(\mu)}^2 \leq 1+  C\widetilde \delta \sqrt{\lambda_l} + C (\theta +\widetilde \delta).  \] 
Thus, 
\[  \lambda_l ^\veps \leq \sigma_\eta \lambda_l  +   C(\widetilde \delta \sqrt{\lambda_l} +  \frac{\widetilde \delta}{\veps} +\veps   +\theta    ) \lambda_l.\]
This establishes the upper bound for $\lambda_l^\veps$ in terms of $\lambda_l$.

To obtain the lower bound, we let $u_1, \dots, u_l$ be an orthonormal set of eigenvectors of $\L^\veps$ corresponding to its first $l$ eigenvalues. We define functions $f_i \in L^2(\mu)$, according to
\[ f_i := \widetilde{\mathcal{I}} u_i , \quad i=1, \dots, l. \]
Thanks to the smallness assumption of the parameters $\widetilde \delta, \veps, \theta$, we see that $S:= \Span \{ f_1, \dots, f_l \}$ has dimension $l$. Just as in the proof of the upper bound we can use the second parts of Propositions \ref{prop:localnonlocal} and \ref{prop:almostisometries} to conclude that
\[ \sigma_\eta \lambda_l \leq  \lambda_l^\veps + C\left( \veps \sqrt{\lambda_l^\veps} + \frac{\widetilde \delta}{\veps}  + \veps + \theta  \right) \lambda_l^\veps.  \]
However, using the upper bound for $\lambda_l^\veps$, we can replace  all the appearances of $\lambda_l^\veps$ in the above error terms with $\lambda_l$, and obtain the desired inequality. 
%
%

\end{proof}

\nc

We can now prove Theorem \ref{thm:evectorsveps}. We start with its second part.

\begin{proof}[Proof of 2) in Theorem \ref{thm:evectorsveps}]
\medskip

A given eigenvalue $\lambda>0$ of $\Delta_{\rho}$  is equal to  $\lambda_{i+1}, \dots, \lambda_{i+k}$ for some $i$ and some $k$, where $k$ is the multiplicity of $\lambda$. Associated to $\lambda$ we define the gap $\gamma_{\lambda}$ according to:
\begin{equation}
 \gamma_{\lambda } :=   \frac{1}{2}\min \{ |\lambda - \lambda_i  |, | \lambda - \lambda_{i+k+1}|  \} .
 \label{def:gap}
\end{equation}

Now, for every $N\in \N$, we can pick $\veps,$ $\theta $ and $\widetilde  \delta $ to be small enough so that for every $l=1, \dots, N$ we have
\[ C\left( \veps \sqrt{\lambda_l} + \frac{\widetilde \delta}{\veps}  + \veps + \theta  \right) \lambda_l \leq  \gamma_{\lambda_l}.\] 
In particular, for such choice of parameters and according to Theorem \ref{thm:evaluesveps}, with probability greater than $1- Cn \exp(-n \theta^2 \widetilde {\delta}^m)$ we have
\begin{equation}
 |\lambda_l^\veps -  \sigma_\eta \lambda_l| \leq \gamma_{\lambda_l}, \quad \forall l=1, \dots, N.  
 \label{eqn:gapineq}
 \end{equation}

Let $\lambda$ be one of $\lambda_1, \dots, \lambda_N$. By making $N$ slightly larger if necessary, we can assume that $i+k  \leq N$ (where $i$ and $k$ are as defined earlier for $\lambda$).  Let $S$ be a subspace of $L^2(\mu_n)$ spanned by eigenvectors of $\L^\veps$ with corresponding eigenvalues $\lambda_{i+1}^\veps,\dots, \lambda_{i+k}^\veps$ and let us denote by $P_{S}$ the orthogonal projection onto $S$, and by $P_S^{\perp}$ the orthogonal projection onto the orthogonal complement of $S$. We know that an arbitrary unit norm eigenfunction $f$ of $\Delta_{\rho}$ with eigenvalue $\lambda$ is at least $C^3(\M)$, and since $\Delta_{\rho} u = \lambda u$, we obviously have that $\Delta_{\rho} u$ is also $C^3(\M)$. Restricting $f$ and $\Delta_{\rho} f$ to the point cloud $X$, we can view both of these functions as elements in $L^2(\mu_n)$, and we can also see that
\[ P_S^{\perp} \Delta_{\rho} f  = \lambda P_{S}^\perp f = \lambda \sum_{j \not = i+1, \dots, i+k}  \langle  f ,  \psi_{j}^\veps \rangle_{L^2(\mu_n)} \psi_{j}^\veps \]
where $\psi_1^\veps , \dots, \psi_n^\veps$ is an orthonormal basis of eigenvectors of $\L^\veps$ with corresponding eigenvalues $\lambda_1^\veps , \dots, \lambda_n^\veps$. Likewise, we can compute $\L^\veps f$ (again thinking of $f$ as its restriction to $X$) and see that
\[ P_{S}^\perp \L^\veps f = \sum_{j \not = i+1, \dots, i+k} \lambda_{j}^\veps \langle  f ,  \psi_{j}^\veps \rangle_{L^2(\mu_n)}  \psi_j^\veps.  \]
 Subtracting these two expressions and using the orthogonality of the $\psi_j^\veps$ we obtain
 \[  \min \{  | \lambda_{i}^\veps - \lambda | , |\lambda_{i+k+1}^n - \lambda| \}  \lVert  P_{S}^\perp f \rVert_{L^2(\mu_n)} \leq  \lVert P_S^\perp (\L^\veps f -\Delta_{\rho} f)  \rVert_{L^2(\mu_n)} \leq \lVert \L^\veps f - \Delta_\rho f \rVert_{L^2(\mu_n)}. \]
 However, from \eqref{eqn:gapineq} we have 
 \[  \gamma_{\lambda }    \leq  \min \{  | \lambda_{i}^n - \lambda | , |\lambda_{i+k+1}^n - \lambda| \},\]
and so
\[  \lVert  P_{S}^\perp f \rVert_{L^2(\mu_n)} \leq \frac{1}{\gamma_{\lambda}} \lVert  \L^\veps f - \Delta_{\rho} f \rVert_{L^2(\mu_n)}.\]
Naturally, since $P_S^\perp f= f - P_S f $ we obtain
\begin{equation}
 \lVert  f-  P_{S} f \rVert_{L^2(\mu_n)} \leq \frac{1}{\gamma_{\lambda}} \lVert  \L^\veps f - \Delta_{\rho} f \rVert_{L^2(\mu_n)}.  
\label{eqn:projectionerror}
\end{equation}
On the other hand, from the pointwise consistency results from Theorem \ref{thm:con1} it follows that if $f_1, \dots, f_k$ is an orthonormal basis for the eigenspace of eigenfunctions of $\Delta_{\rho}$ with eigenvalue $\lambda$, then with probability greater than $1 - 2kn\exp\left(-c n\eps^{m+4}\right) $ we have
\[  \lVert  \L^\veps f_j - \Delta_{\rho} f_j \rVert_{L^2(\mu_n)} \leq C \veps , \quad \forall j=1,\dots,k.  \]
Combining the above with \eqref{eqn:projectionerror} we conclude that with probability greater than $1- Cn \exp(-n \theta^2 \widetilde {\delta}^m) - 2kn\exp\left(-c n\eps^{m+4}\right) $ we can find an orthonormal set $v_1, \dots, v_k$ spanning $S$ such that 
\[  \lVert  f_j - v_j   \rVert_{L^2(\mu_n)} \leq C \veps, \quad \forall j=1, \dots, k. \]
In turn this implies that if $u_1, \dots, u_k$ is a family of orthonormal eigenfunctions of $\L^\veps$ with corresponding eigenvalues $\lambda_{i+1}^\veps, \dots, \lambda_{i+k}^\veps$, then there exists an orthonormal set $\tilde f_1, \dots, \tilde f_k$ of eigenfunctions of $\Delta_{\rho}$ with eigenvalue $\lambda$ such that
\[  \lVert u_i - \tilde f_i \rVert_{L^2(\mu_n)}\leq C \veps .\]
This implies the desired result.  
\end{proof}

\begin{proof}[Proof of 1) in Theorem \ref{thm:evectorsveps}]

	In general it will not be meaningful to use the pointwise consistency results from  Theorem \ref{thm:con1} because $\veps$ here can be smaller than what the probabilistic estimates in Theorem \ref{thm:con1} allow it to be. Thus, we use an energy estimate based on Propositions \ref{prop:localnonlocal} and \ref{prop:localnonlocal} just as in \cite{trillos2018spectral}. Since this argument has been shown in detail in \cite{trillos2018spectral}, here we only present the proof for the first non-trivial eigenvectors.

	The first non-trivial eigenvalue $\lambda$ of $\Delta_{\rho}$ is equal to $\lambda_{2}, \dots, \lambda_{k+1}$ where $k$ is its multiplicity. Let $f$ be an eigenfunction of $\Delta_{\rho}$ with eigenvalue $\lambda$ and let $u$ be equal to $u= \widetilde P f$ where $\widetilde P$ is as in \eqref{def:P1}. Consider now the span of a set of orthonormal eigenvectors of $\L^\veps$ with eigenvalues $\lambda_2^n, \dots, \lambda_{k+1}^n$, and denote this linear subspace by $S$. 
	
	We see from Proposition \ref{prop:LocalNonLocal} that
	\begin{align}
	\begin{split}
  \left(1+  C\left(\frac{\widetilde \delta}{\veps} + \veps  + \theta \right) \right) \sigma_\eta \lambda_2 & = \left(1+  C\left(\frac{\widetilde \delta}{\veps} + \veps  + \theta \right) \right) \sigma_\eta D_2(f) \geq b_\veps(u) = \langle \L^\veps u , u \rangle_{L^2(\mu_n)}  
	\\ &\geq \lambda_2^\veps \lVert P_Su \rVert^2_{L^2(\mu_n)}  + \lambda_{k+2}^{\veps} \lVert u - P_S u \rVert_{L^2(\mu_n)}^2.
	\\&=  \lambda_2^\veps ( \lVert u \rVert^2_{L^2(\mu_n)} - \lVert u- P_Su \rVert^2_{L^2(\mu_n)})  + \lambda_{k+2}^{\veps} \lVert u - P_S u \rVert_{L^2(\mu_n)}^2.
	\end{split}
	\label{ineq:eigenfuncAux}
	\end{align}
	Using the estimates from the first part of Theorem \ref{thm:evaluesveps} and Proposition \ref{prop:almostisometries} we have that as long as $\widetilde \delta$ and $\theta$ are small enough, then with probability greater than $1-Cn \exp \left(-cn \theta^2 \widetilde \delta ^m \right)$, 
	\[   |\sigma_\eta \lambda_2 -\lambda_2^n| \leq C\left (\frac{\widetilde \delta }{\veps} + \veps + \theta \right)  \leq \frac{\gamma_\lambda}{2}\]
	\[   |\sigma_\eta \lambda_{k+2} -\lambda_{k+2}^n| \leq C\left (\frac{\widetilde \delta }{\veps} + \veps + \theta \right) \leq \frac{\gamma_{\lambda}}{2}  \]
	and
	\[  \lvert 1- \lVert u \rVert^2_{L^2(\mu_n)} \rvert\leq C(\theta + \widetilde \delta),\]
	where $C$ is some constant that may depend on $\lambda$ and where $\gamma_\lambda$ is the spectral gap defined in \eqref{def:gap}. Combining the above inequalities with \eqref{ineq:eigenfuncAux} we conclude that
	\[   \lVert u - P_S u \rVert_{L^2(\mu_n)}\leq \left(\frac{C}{\gamma_\lambda} \left(  \frac{\widetilde \delta}{\veps} + \veps + \theta \right) \right)^{1/2}. \]
	Now, by definition 
	\[u(x_i) - f(x_i) = \widetilde P f(x_i) - f(x_i)= n \int_{\widetilde U_i} (f(x)- f(x_i))\widetilde \rho_n(x) d\vol_\M(x).\] 
	We notice that the last term is in absolute value less than
	\[ \lVert \nabla f \rVert_{L^\infty(\mu)} \widetilde \delta, \]
	and that $ \lVert \nabla f \rVert_{L^\infty(\mu)}$ is finite because $f$ is actually $C^3(\M)$. In particular, 
	\[   \lVert  f- u  \rVert_{L^2(\mu_n)} \leq C_\M \widetilde \delta,  \]
	Therefore,
	\[ \lVert f - P_S \widetilde P f  \rVert_{L^2(\mu_n)} \leq    \left(\frac{C}{\gamma_\lambda} \left(  \frac{\widetilde \delta}{\veps} + \veps + \theta \right) \right)^{1/2} + C_{\M, \lambda} \widetilde \delta . \]
	From this it is straightforward to see that if $f_1, \dots, f_k$ form an orthonormal basis for the eigenspace of eigenfunctions of $\Delta_{\rho}$ with eigenvalue $\lambda$, then we can find an orthonormal set $v_1, \dots,  v_k$ spanning $S$ such that 
	\[  \lVert  f_j - v_j   \rVert_{L^2(\mu_n)} \leq   \left(\frac{C}{\gamma_\lambda} \left(  \frac{\widetilde \delta}{\veps} + \veps + \theta \right) \right)^{1/2} + C_{\M, \lambda} \widetilde \delta , \quad \forall j=1, \dots, k. \]
	In turn this implies that if ${u}_1, \dots, {u}_k$ is a family of orthonormal eigenfunctions of $\L^\veps$ with corresponding eigenvalues $\lambda_{2}^\veps, \dots, \lambda_{k+1}^\veps$, then there exists an orthonormal set $\widetilde f_1, \dots, \widetilde f_k$ of eigenfunctions of $\Delta_{\rho}$ with eigenvalue $\lambda$ such that
	\[  \lVert {u}_j - \widetilde f_j \rVert_{L^2(\mu_n)}\leq  \left(\frac{C}{\gamma_\lambda} \left(  \frac{\widetilde \delta}{\veps} + \veps + \theta \right) \right)^{1/2} + C_{\M, \lambda} \widetilde \delta , \quad \forall j=1, \dots, k .\]
	This implies the desired result.  
	\nc

\end{proof}

We are ready to prove 2) in Theorem \ref{thm:evaluesveps}.

\begin{proof}[Proof of 2) in Theorem \ref{thm:evaluesveps}]

%
%
%
%
%

First notice that if $\lambda>0$ is an eigenvalue of $\Delta_\rho$ and  $\lambda^\veps>0$ is an eigenvalue of the graph Laplacian $\L^\eps$, then, 
\begin{equation}\label{eq:errorest}
|\sigma_\eta \lambda - \lambda^\eps| \leq\inf_{\substack{f\in S(\Delta_\rho,\lambda)\\ u\in S(\L^\eps,\lambda^\eps)}} \frac{\lVert \L^\veps f - \sigma_\eta \Delta_\rho f \rVert_{L^2(\mu_n)}}{|\langle u,f\rangle_{L^2(\mu_n)}|}
\end{equation}
where in the above $ S(\Delta_\rho, \lambda)$ is the set of unit norm eigenfunctions of $\Delta_\rho$ with eigenvalue $\lambda$, and  $S(\L^\eps,\lambda^\eps)$ is the set of eigenvectors of $\L^\veps$ with eigenvalue $\lambda^\veps$. To see this, let $f\in S(\Delta_\rho,\lambda)$ and $u \in S(\L^\eps,\lambda^\eps)$. Then, we can restrict $f$ and $\Delta_\rho f$ to the point cloud and get
\begin{align*}
\lambda^\eps\langle u,f\rangle_{L^2(\mu_n)}&=\langle \L^\eps u,f\rangle_{L^2(\mu_n)}\\
&=\langle u,\L^\eps f\rangle_{L^2(\mu_n)}\\
&=\langle u,\L^\eps f - \sigma_\eta \lambda f + \sigma_\eta\lambda f\rangle_{L^2(\mu_n)}\\
&=\sigma_\eta \lambda \langle u,f\rangle_{L^2(\mu_n)} + \langle u,\L^\eps f - \sigma_\eta\Delta_\rho f \rangle_{L^2(\mu_n)}.
\end{align*}
Rearranging the terms we obtain
\begin{equation}\label{eq:keyforlower}
\lambda^\veps - \sigma_\eta \lambda= \frac{ \langle u,\L^\eps f - \sigma_\eta\Delta_\rho f \rangle_{L^2(\mu_n)} }{ \langle  u, f \rangle_{L^2(\mu_n)} },  
\end{equation}
provided $\langle  u, f \rangle_{L^2(\mu_n)} \neq 0$. Applying the Cauchy-Schwarz inequality we obtain
\[ |\lambda^\veps - \sigma_\eta \lambda| \leq \frac{ \lVert \L^\veps f - \sigma_\eta \Delta_\rho f \rVert_{L^2(\mu_n)} }{| \langle  u, f \rangle_{L^2(\mu_n)} |}   \]

Take $\lambda=\lambda_l$ and $\lambda^\veps =\lambda_{l}^\veps$ in the above formula. From theorem \ref{thm:evectorsveps} we know that with probability at least $1- n \exp(-n \theta^2 \widetilde {\delta}^m) - Cn\exp\left(-c n\eps^{m+4}\right) $ there are unit norm eigenvectors $u$ and $f$ of $\L^\veps$ and $\Delta_{\rho}$ with corresponding eigenvalues $\lambda_l^\veps$ and $\lambda_l$, such that
\[ \lVert u_l - f_l  \rVert_{L^2(\mu_n)} \leq C\veps, \]
so that in particular
\[  |\langle u, f \rangle_{L^2(\mu_n)}|  \geq 1- C\veps \geq \frac{1}{2},  \]
when $\veps$ is small enough. Also, from Theorem \ref{eq:con1} we know that with probability at least  $1-Cn\exp\left(-c n\eps^{m+4}\right)$ we have
\[  \lVert \L^\veps f - \sigma_\eta\Delta_\rho f \rVert_{L^2(\mu_n)} \leq C \veps.\]
The result now follows.

\end{proof}

\subsection{Undirected \texorpdfstring{$k$}{k}-NN graph.}
\label{sec:proofsUnk}

After defining appropriate interpolation and discretization maps, and establishing their relevant properties, we will be able to follow the exact same proof as the one we presented in the $\veps$-graph case. For this reason we focus on establishing Proposition \ref{prop:localnonlocalkNN} and \ref{prop:almostisometrieskNN} below and skip the rest of the details.

\nc

We will restrict our analysis to the kernel $\eta$ given by
\[  \eta(t) = \begin{cases} 1 &  t <1 \\ 0 & t>1. \end{cases} \]
Let $\psi$ be defined as \eqref{eqn:psi}. Notice that in this case we have
\begin{equation}
\psi(t) \leq \frac{1}{\sigma_\eta}\eta(t) , \quad \forall t >0.
\label{ineq:psieta}
\end{equation}
For a function $r: \M \rightarrow (0,\infty)$, and for a function $f \in L^2(\M)$ we define
\[   \Lambda_r f(x) := \frac{1}{\tau(x)} \int_{\M} K_{r(y)}(x,y) f(y) d\mu(y),  \]
as in Section \ref{sec:proofsveps} where the only difference is that now the bandwidth $r$ is allowed to change in space. The normalization constant continues to be of the form
\[  \tau(x) := \int_{\M}  K_{r(y)}(x,y) d\mu(y).\]
It is important to notice that in the definition of $\Lambda_r f$, the function $r$ is evaluated at $y$ (and not at $x$) so as to have an expression for the gradient of $\Lambda_r f$ which does not depend on derivatives of $r$ (which we may even take it to be discontinuous).

\begin{remark}
	\label{rem:emax}
In what follows we set $\veps$ to be the function defined as in \eqref{eq:epsx}. Notice that from the fact that $0 < \rho_{min} \leq\rho \leq \rho_{max}$ we see that $\veps(x)$ and $  \left( \frac{k}{n} \right)^{1/m}$ are of the same order for every $x \in \M$. Also, since $k$ has been assumed to be much smaller than $n$, then $\veps(x)^2$, which is of the same order as $(k/n)^{2/m}$, is much smaller than $\veps(x)$. We will use $\veps_{max}$ to denote the maximum of $\veps$. In particular, 
\[ c \left( \frac{k}{n}\right)^{1/m}  \leq \veps_{min} \leq \veps_{max} \leq C \left( \frac{k}{n}\right)^{1/m} .\]
\end{remark}

%
%
%
%
%

\begin{proposition}[Inequality for Dirichlet energies]
	\label{prop:localnonlocalkNN}
	Let $k$, $\widetilde{\delta}$, and $\theta$ be fixed numbers satisfying Assumptions \ref{assumptions}. Then, with probability greater than $1-  Cn\exp( -Cn \theta^2 \widetilde{\delta}^m )$ we have: \nc
	\begin{enumerate}
		\item For any $f \in L^2(\mu)$,
		\[  b_k(\widetilde P f) \leq  ( 1+ C\left(\frac{k}{n} \right)^{1/m} + C \widetilde \delta \left(\frac{n}{k} \right)^{1/m} + C\theta )\sigma_\eta D_{1-2/m}(f)  \]
		\item For any $u \in L^2(\mu_n)$,
		\[  \sigma_\eta D_{1-2/m}(\widetilde {\mathcal{I}} u) \leq ( 1+ C\left(\frac{k}{n} \right)^{1/m} + C \widetilde \delta \left(\frac{n}{k} \right)^{1/m} + C\theta ) b_k(u)  .\]
	\end{enumerate}
\end{proposition}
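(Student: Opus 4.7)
The plan is to adapt the three-layer strategy used to prove Proposition \ref{prop:LocalNonLocal} to the spatially varying length-scale. I would introduce the nonlocal energy
\[E^{NN}(f):=\left(\frac{n\alpha_m}{k}\right)^{1+2/m}\int_\M\int_\M\eta\!\left(\frac{|x-y|}{r(x,y)}\right)(f(x)-f(y))^2\rho(x)\rho(y)\,d\vol_\M(x)\,d\vol_\M(y),\]
and its tilde-version $\widetilde E^{NN}$ obtained by replacing $\rho$ with $\widetilde\rho_n$. By Proposition \ref{prop:AuxiliaryDensity}, $\widetilde E^{NN}(f)=(1+O(\theta+\widetilde\delta))E^{NN}(f)$ exactly as in \eqref{lemma:NonlocalIneqs}. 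The proposition would then follow by chaining three inequalities: a nonlocal-to-local inequality $E^{NN}(f)\leq(1+C(k/n)^{1/m})\sigma_\eta D_{1-2/m}(f)$, a graph-to-nonlocal inequality $b_k(\widetilde P f)\leq(1+C\widetilde\delta(n/k)^{1/m})\widetilde E^{NN}(f)$, and the interpolation-side chain $\sigma_\eta D_{1-2/m}(\widetilde I u)\leq(1+C(k/n)^{1/m})\widetilde E^{NN}(\widetilde P^* u)\leq(1+C\widetilde\delta(n/k)^{1/m})b_k(u)$.

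Inequality (a) would follow from a Taylor expansion in normal coordinates at each base point $x\in\M$. Since $\rho\in C^{2,\alpha}$ and $\eps(x)^m=k/(n\alpha_m\rho(x))$, the function $\eps(\cdot)$ is Lipschitz with $|\nabla\eps|\leq C\eps$, so $r(x,\exp_x(v))=\eps(x)(1+O(\eps(x)))$ whenever $|v|\leq 2\eps(x)$. Changing variables $v=\eps(x)w$ and Taylor-expanding $f$ and $\rho$ produces, up to an $O((k/n)^{1/m})$ relative error, the integrand $\sigma_\eta\rho(x)^2\eps(x)^{m+2}(n\alpha_m/k)^{1+2/m}|\nabla f(x)|^2$; the identity $\eps(x)^{m+2}(n\alpha_m/k)^{1+2/m}=\rho(x)^{-1-2/m}$ recovers $\sigma_\eta\rho(x)^{1-2/m}|\nabla f(x)|^2$, which integrates exactly to $\sigma_\eta D_{1-2/m}(f)$. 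The errors from \eqref{eq:d}, from Lipschitzness of $\eps$ and $\eta$, and from the $O(\eps^2)$ Jacobian terms combine into the stated $O((k/n)^{1/m})$ relative error; this is the same local analysis underlying Lemma \ref{lem:conknn}.

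Inequalities (b) and (c) mirror Lemmas 13, 14, 8, and 9 of \cite{trillos2018spectral}. I would first invoke the indicator-kernel hypothesis $\eta=\mathds{1}_{[0,1)}$ to simplify the weight comparison. On the high-probability event from Lemma \ref{lem:epscon}, union-bounded over all $n$ points with threshold $\delta\sim\widetilde\delta(n/k)^{1/m}$, we have $r_k(x_i,x_j)\leq r(x_i,x_j)(1+C\widetilde\delta(n/k)^{1/m})$ uniformly in $i,j$. Hence, whenever $\{x_i,x_j\}$ is a graph edge and $(x,y)\in\widetilde U_i\times\widetilde U_j$, we have $|x-y|\leq|x_i-x_j|+2\widetilde\delta\leq r_k(x_i,x_j)+2\widetilde\delta\leq r(x,y)(1+C\widetilde\delta(n/k)^{1/m})$, so the graph edge is captured by the nonlocal kernel with an inflated bandwidth; a Jensen-type bound as in Lemma 13 of \cite{trillos2018spectral} then yields (b), and the reverse direction (the discrete-to-nonlocal half of (c)) proceeds analogously by bounding from below. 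The interpolation half $\sigma_\eta D_{1-2/m}(\Lambda_r g)\leq(1+O((k/n)^{1/m}))\widetilde E^{NN}(g)$ follows the scheme of Lemma 9 of \cite{trillos2018spectral}: because in $\Lambda_r$ the bandwidth $r(y)$ depends only on $y$, the gradient $\nabla_x$ does not differentiate $r$, and the identity $\psi'(t)=-\eta(t)t/\sigma_\eta$ built into \eqref{eqn:psi} restores the kernel $\eta$ in the derivative, whereupon Cauchy--Schwarz produces $\widetilde E^{NN}(g)$. Boundedness of $\rho$ and of $\eps(\cdot)$ ensures $\tau(x)$ is uniformly positive and bounded, so the spatially varying normalization presents no difficulty.

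The main obstacle is the uniform-in-pairs control $r_k(x_i,x_j)=r(x_i,x_j)(1+O(\widetilde\delta(n/k)^{1/m}))$ needed to extract the factor $\widetilde\delta(n/k)^{1/m}$ in the final estimates; this is what forces the threshold $\delta\sim\widetilde\delta(n/k)^{1/m}$ in Lemma \ref{lem:epscon}, and the union bound over $n$ points must be accommodated inside the probability budget $1-Cn\exp(-Cn\theta^2\widetilde\delta^m)$ once parameters are synchronized in line with Assumptions \ref{assumptions}. A secondary, mostly bookkeeping obstacle is that every manipulation which in the $\eps$-graph case cleanly factored the constant $\eps$ outside an integral now has to be done pointwise with the space-dependent $\eps(x)$, and the resulting density prefactors must be carefully repackaged into the $\rho^{1-2/m}$ weighting at the end.
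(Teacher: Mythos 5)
Your three-layer strategy (discrete $\to$ nonlocal $\to$ local, with a $\widetilde\rho_n$-perturbed variant) is the right shape, and parts (b) and (c) are close in spirit to what the paper does. But there is a genuine gap in part (a), and a structural choice that would make (c) awkward.

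The key issue is the nonlocal-to-local inequality $E^{NN}(f)\leq(1+C(k/n)^{1/m})\sigma_\eta D_{1-2/m}(f)$. You propose to prove this "by Taylor expansion in normal coordinates" and say this is "the same local analysis underlying Lemma \ref{lem:conknn}." That lemma gives \emph{pointwise} consistency for $f\in C^3(\M)$, and the Taylor remainder there is controlled by $\|f\|_{C^3}$. Here, by contrast, the Proposition must hold for \emph{every} $f\in L^2(\mu)$, effectively every $f\in H^1(\mu)$; the error must be multiplicative against $D_{1-2/m}(f)$ itself, not against higher Sobolev norms of $f$. A genuine Taylor expansion of $f$ produces a remainder of size $O(\eps^3\|\nabla^2 f\|)$, which cannot in general be dominated by $C\eps\,D_{1-2/m}(f)$. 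The paper circumvents this with a different kind of argument: Lemma \ref{nonlocalLessLocal} writes $f(y)-f(x)$ as an integral of $df$ along the geodesic, applies Cauchy--Schwarz, and then uses the Liouville-measure invariance of the geodesic flow on $T\M$ to change variables. This yields $E_r(f)\leq(1+Cr_{max})(1+\beta_r)^{2m+4}\sigma_\eta D_{1-2/m}(f)$ for all $H^1$ functions with no higher-order terms entering. This is not a cosmetic simplification; the Taylor route as stated would fail for non-smooth $f$.

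Secondly, you define the nonlocal energy $E^{NN}$ with the symmetric kernel $\eta(|x-y|/r(x,y))$, $r(x,y)=\max\{\eps(x),\eps(y)\}$. This is natural as a mimicry of $b_k$, but the interpolation estimate (c) differentiates $\Lambda_r$ in $x$, and since the kernel $K_{r(y)}(x,y)$ depends on $r$ only through $y$, that derivative produces an \emph{asymmetric} kernel in $r(y)$, not a max. The paper therefore defines its intermediate energies $E_r$ and $\widetilde E_r$ with the bandwidth depending only on $y$ from the start, so that both the discretization step and the interpolation step compare cleanly against the same object, and the $\max$ in $b_k$ is absorbed by the upper/lower bounds on $\veps_k$. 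To make your symmetric $E^{NN}$ work you would need an extra comparison step between the symmetric- and one-sided-bandwidth energies, which is not free.

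Finally, a smaller point: you propose controlling $r_k(x_i,x_j)/r(x_i,x_j)$ by union-bounding Lemma \ref{lem:epscon} with $\delta\sim\widetilde\delta(n/k)^{1/m}$, and you correctly flag that the probability budget must be "synchronized." The paper instead introduces Lemma \ref{lemma:evpskeps}, which bounds $|\veps_k(x_i)-\veps(x_i)|$ purely in terms of the $\infty$-OT distance $\widetilde\delta$ to $\widetilde\mu_n$ and the $L^\infty$ density error $\theta$; this keeps the failure probability at exactly $Cn\exp(-Cn\theta^2\widetilde\delta^m)$ without additional parameter reconciliation. Your route via Lemma \ref{lem:epscon} gives an exponent $\delta^2 k=\widetilde\delta^2(n/k)^{2/m}k$ that is not obviously comparable to $n\theta^2\widetilde\delta^m$, so this reconciliation is a real step, not just bookkeeping.
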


\begin{proposition}[Discretization and interpolation maps are almost isometries]
	Suppose $\theta, \widetilde{\delta}$ and $k$ satisfy Assumptions \ref{assumptions}. Then, with probability at least $1- C n \exp (-Cn \theta^2 \widetilde{\delta}^m )$ we have:
	\begin{enumerate}
		\item For every $f \in L^2(\mu)$,
		\[  \left  \lvert   \lVert f  \rVert_{L^2(\mu)} ^2  -  \lVert   \widetilde{P} f  \rVert_{L^2(\mu_n)}^2  \right  \rvert \leq   C \widetilde \delta \lVert f \rVert_{L^2(\mu)} \sqrt{ D(f)}  + C(\theta + \widetilde \delta) \lVert f \rVert^2_{L^2(\mu)}.  \]
		\item For every $u \in L^2(\mu_n)$,
		\[  \left  \lvert  \lVert  u  \rVert_{L^2(\mu_n)}^2   -  \lVert   \widetilde{\mathcal {I}} u  \rVert_{L^2(\mu)}^2   \right \rvert \leq  C \left( \frac{k}{n}\right)^{1/m} \lVert u  \rVert_{L^2(\mu)} \sqrt{b(u)} + C(\theta + \widetilde \delta) \lVert u \rVert^2_{L^2(\mu_n)}.  \]	
	\end{enumerate}
	\label{prop:almostisometrieskNN}
\end{proposition}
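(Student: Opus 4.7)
The proof will mirror that of Proposition~\ref{prop:almostisometries} in the $\veps$-graph setting. The discretization map $\widetilde{P}$ is constructed in exactly the same way (using an $\infty$-OT map $\widetilde{T}$ between the auxiliary measure $\widetilde{\mu}_n$ and $\mu_n$, with cells $\widetilde{U}_i$ of diameter at most $2\widetilde{\delta}$), so assertion (1) requires only minor reuse of the previous argument. The genuinely new content lies in assertion (2), where the interpolation operator $\widetilde{\mathcal{I}}$ uses the convolution kernel $K_{r(y)}$ with a bandwidth $r$ that varies in space at the scale $\veps(y)\lesssim(k/n)^{1/m}$, as recorded in Remark~\ref{rem:emax}.

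For part (1), I would start exactly as in the $\veps$-graph proof: use Proposition~\ref{prop:AuxiliaryDensity} to swap $\mu$ for $\widetilde{\mu}_n$ at the cost of $C(\theta+\widetilde{\delta})\|f\|_{L^2(\mu)}^2$, then exploit the identity $\|\widetilde{P}^*\widetilde{P}f\|_{L^2(\widetilde{\mu}_n)} = \|\widetilde{P}f\|_{L^2(\mu_n)}$ and the triangle inequality to reduce the task to bounding $\|\widetilde{P}^*\widetilde{P}f - f\|_{L^2(\widetilde{\mu}_n)}$. The averaging trick over $W = \M\cap B(x,2\widetilde{\delta})\cap B(y,2\widetilde{\delta})$ from Lemma~3.4 of BIK produces a bound of the form $C\widetilde{\delta}^2 \widetilde{E}_{2\widetilde{\delta}}(f) \leq C\widetilde{\delta}^2 D_2(f)$, and since $\rho_{\min} \leq \rho \leq \rho_{\max}$, the Dirichlet energies $D_2$ and $D_{1-2/m}$ are comparable up to a constant; hence the same bound holds with $D(f)$ interpreted as either. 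This yields the stated inequality.

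For part (2), I would again use $\|u\|_{L^2(\mu_n)} = \|\widetilde{P}^*u\|_{L^2(\widetilde{\mu}_n)}$ and the triangle inequality to reduce to the pointwise estimate on $\Lambda_r \widetilde{P}^*u - \widetilde{P}^*u$ in the $L^2(\widetilde{\mu}_n)$ norm. The key computation is Cauchy–Schwarz applied pointwise:
\[
|\Lambda_r f(x) - f(x)|^2 \leq \frac{1}{\tau(x)} \int_{\M} K_{r(y)}(x,y)\,(f(y)-f(x))^2\, d\mu(y).
\]
Integrating in $x$ against $\mu$ (and using Proposition~\ref{prop:AuxiliaryDensity} once more to swap $\mu$ and $\widetilde{\mu}_n$) I would then apply the pointwise inequality $\psi(t) \leq \eta(t)/\sigma_\eta$ from \eqref{ineq:psieta} to dominate $K_{r(y)}(x,y)$ by a multiple of $\eta(|x-y|/r(y))/r(y)^m$. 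Since $r(y) \leq \max\{\veps(x),\veps(y)\}$, enlarging the kernel to the symmetric version $\eta(|x-y|/\max\{\veps(x),\veps(y)\})$ produces precisely the integrand of the nonlocal energy associated to the $k$-NN graph. Passing from the nonlocal energy to $b_k(u)$ then proceeds exactly as in \eqref{NonLocal-Discrete}, replacing the $\widetilde{\delta}/\veps$ error by $\widetilde{\delta}(n/k)^{1/m}$. The extra factor of $(k/n)^{1/m}$ in front arises from two multiplicative bandwidth factors $r(y) \leq C(k/n)^{1/m}$, one of which compensates the $r^{-(m+2)}$ weight in the definition of $b_k$ and the other of which becomes the prefactor in the claimed bound.

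The main obstacle is the loss of symmetry in $K_{r(y)}(x,y)$, since $r(y) \neq r(x)$ in general. This is handled by the monotonicity trick above: dominate the kernel by its symmetric counterpart with bandwidth $\max\{\veps(x),\veps(y)\}$, which is exactly the bandwidth appearing in $r_k(x,y)$ in the definition of $\L^k$ (after accounting for the concentration estimate \eqref{eqn:auxrelationeps} from Lemma~\ref{lem:epscon}, which says $\veps_k$ and $\veps$ agree up to the event probability already contained in the target bound). This is precisely why the definition of $r_k$ uses a maximum rather than a minimum, and it is what makes the nonlocal-to-graph passage work with the same constants as in the $\veps$-graph setting, up to the replacement $\widetilde{\delta}/\veps \rightsquigarrow \widetilde{\delta}(n/k)^{1/m}$.
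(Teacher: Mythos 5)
Your overall strategy matches the paper's. For part (1), you correctly observe that $\widetilde P$ is the same map as in the $\veps$-graph setting and that $D_2(f)\leq C D_{1-2/m}(f)$ (because $\rho$ is bounded above and below) transfers the $\veps$-graph bound verbatim; the paper says precisely this. For part (2), your skeleton—Jensen's inequality applied to $\Lambda_r f(x)-f(x)$, the domination $\psi\leq\eta/\sigma_\eta$ from \eqref{ineq:psieta}, the emergence of the nonlocal energy, then the passage to $b_k(u)$—is the paper's skeleton as well (it proves the intermediate estimate \eqref{eqn:LambdarkNN} and then invokes \eqref{ineq:kNNnonlocaldiscrete}). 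Your accounting of the $(k/n)^{1/m}$ prefactor via $r_{max}^2$ is also essentially right.

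There is, however, a genuine gap in the probability bookkeeping. You claim the concentration of $\veps_k$ around $\veps$ via \eqref{eqn:auxrelationeps} from Lemma~\ref{lem:epscon} holds ``up to the event probability already contained in the target bound.'' It does not: Lemma~\ref{lem:epscon} is a Bernstein-type estimate with event probability of order $\exp(-c\delta^2 k)$, which has no a priori relation to the stated probability $1-Cn\exp(-Cn\theta^2\widetilde\delta^m)$. The paper instead controls $\veps_k$ in terms of $\veps$ by Lemma~\ref{lemma:evpskeps} (inequality \eqref{ineq:vepsk}), which is proved \emph{on the same high-probability event} where the auxiliary density $\widetilde\rho_n$ from Proposition~\ref{prop:AuxiliaryDensity} exists and the $\infty$-OT map $\widetilde T$ has displacement at most $\widetilde\delta$. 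Working on that single event is what makes the $Cn\exp(-Cn\theta^2\widetilde\delta^m)$ failure probability propagate cleanly through \eqref{ineq:kNNnonlocaldiscrete} and hence into the proposition's conclusion. As written, your argument either introduces a union-bound term $\sim n\exp(-c\delta^2k)$ absent from the statement, or requires a separate argument (which you do not supply) that this term is dominated by the stated one under Assumptions~\ref{assumptions}. You should replace the citation of Lemma~\ref{lem:epscon} by Lemma~\ref{lemma:evpskeps}, which is exactly what \eqref{ineq:kNNnonlocaldiscrete} is built on.

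A smaller point: your early ``symmetrization'' to the kernel $\eta(|x-y|/\max\{\veps(x),\veps(y)\})$ is a detour from the paper's route. The paper keeps the asymmetric $E_r$ with $r(y)=\veps(y)-2\widetilde\delta$ throughout and only reconciles the asymmetric nonlocal energy with the symmetric $r_k$ of $b_k$ at the nonlocal-to-discrete step \eqref{ineq:kNNnonlocaldiscrete}. Your symmetrization is not incorrect—the ratio $\max\{\veps(x),\veps(y)\}/r(y)$ is bounded above and below on the support of $\eta$—but it is not needed and does not, by itself, produce the integrand of the paper's $E_r$, which is genuinely asymmetric.
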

\nc

In order to show the above propositions we will first need to introduce some non-local energies $E_r$ and $\widetilde E _r$ (we will use the same notation for simplicity as in the $\veps$-graph case) defined in terms of the spatially varying length-scale $r: \M \rightarrow (0,\infty)$ by
\[E_r (f) := \int_\M \int_\M    \eta\left( \frac{d_\M(x,y) }{r(y)}\right) \frac{(f(x) - f(y) )^2}{ r(y)^{m+2} \rho(y)^{2/m-1}} d\vol_\M(x) d\vol_\M(y) , \quad f \in L^2(\mu) \]
\[\widetilde E_r (f) := \int_\M \int_\M     \eta\left( \frac{d_\M(x,y) }{r(y)}\right)\frac{(f(x) - f(y) )^2}{ r(y)^{m+2} \widetilde \rho_n(y)^{2/m-1}}  d\vol_\M(x) d\vol_\M(y) , \quad f \in L^2(\mu), \]
where $\widetilde \rho_n$ is the density from Proposition \ref{prop:AuxiliaryDensity}. Notice that the $L^\infty$ bound for the difference between $\rho$ and $\widetilde \rho_n$ implies
\begin{equation}
(1- C(\theta + \widetilde \delta)) E_r(f)    \leq  \widetilde E_r(f) \leq (1+  C(\theta + \widetilde {\delta}) ) E_r (f), \quad \forall f \in L^2(\mu), 
\label{ineq:tildenotildekNN}
\end{equation}
These inequalities are analogous to the ones in \eqref{lemma:NonlocalIneqs}.

The following three lemmas will be the main tools for proving Propositions \ref{prop:localnonlocalkNN} and \ref{prop:almostisometrieskNN}.

\begin{lemma}
	Suppose that $k$, $\widetilde \delta$ and $\theta$ satisfy Assumptions \ref{assumptions}, and let $\veps: \M \rightarrow (0,\infty)$ be as in \eqref{eq:eps}.
	Then, with probability at least $1- Cn \exp(-Cn \theta^2 \widetilde \delta^m)$ we have for all $i=1, \dots, n$
	\begin{equation}
	\veps(x_i)-  \widetilde C (\veps_{max}^2 +  \widetilde\delta \veps_{max}+ \theta \veps_{max}) \leq  \veps_k(x_i) \leq \veps(x_i)+  \widetilde C (\veps_{max}^2 +  \widetilde\delta \veps_{max}+ \theta \veps_{max}).   
	\label{ineq:vepsk}
	\end{equation}
	
	\label{lemma:evpskeps}
\end{lemma}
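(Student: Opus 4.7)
The plan is to reduce the uniform statement to a pointwise application of Lemma \ref{lem:epscon} at each $x = x_i$, followed by a union bound. Because $x_i$ is itself a sample, applying Lemma \ref{lem:epscon} to it requires one small bookkeeping remark: in the definition \eqref{eq:Ne} of $N_\eps$ the point $x_i$ itself is excluded from the count, whereas Lemma \ref{lem:ch} (which underlies Lemma \ref{lem:epscon}) includes it. This discrepancy changes the count by at most one and is absorbed into the error terms since $k\to\infty$. With that aside, Lemma \ref{lem:epscon} applied to $x=x_i$ gives, for any $C(k/n)^{2/m}\leq \delta\leq 1$,
\[\P\bigl(|\alpha_m\rho(x_i)n\veps_k(x_i)^m - k|\geq C\delta k\bigr)\leq 4\exp(-c\delta^2 k).\]
Using $k = \alpha_m\rho(x_i)n\veps(x_i)^m$ and the first-order Taylor expansion of $t\mapsto t^{1/m}$ at $t=1$ (valid once $C\delta \leq 1/2$), this translates into
\[\P\bigl(|\veps_k(x_i)-\veps(x_i)|\geq C'\delta\veps(x_i)\bigr)\leq 4\exp(-c\delta^2 k).\]

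Next I would select $\delta$ to match the error appearing in the lemma. Taking $\delta = c_0(\veps_{max} + \widetilde\delta + \theta)$ for a small absolute constant $c_0$, the smallness constraints in Assumptions \ref{assumptions} force $\delta\leq 1$; and since $\delta \geq c_0\veps_{max}\geq c'(k/n)^{2/m}$, the admissibility condition $\delta\geq C(k/n)^{2/m}$ of Lemma \ref{lem:epscon} is also met. Plugging this choice into the preceding estimate, the error becomes
\[|\veps_k(x_i)-\veps(x_i)|\leq C'\delta\veps(x_i)\leq \widetilde C\bigl(\veps_{max}^2 + \widetilde\delta\,\veps_{max} + \theta\,\veps_{max}\bigr),\]
which is precisely the claimed bound.

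Finally I would translate the probability $4\exp(-c\delta^2 k)$ into the form stated in the lemma. Since $k\asymp n\veps_{max}^m$ and, by Assumptions \ref{assumptions}, $\widetilde\delta\leq c_\rho(k/n)^{1/m}\asymp \veps_{max}$, one has $n\widetilde\delta^m\leq Ck$. Combined with $\delta\geq c_0\theta$ this yields $c\delta^2 k\geq C'' n\theta^2\widetilde\delta^m$, so the failure probability for each fixed $i$ is at most $4\exp(-Cn\theta^2\widetilde\delta^m)$. A union bound over $i=1,\dots,n$ produces the claimed exceptional-probability bound $Cn\exp(-Cn\theta^2\widetilde\delta^m)$.

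The main obstacle is not analytical but rather careful bookkeeping: verifying that the single scale $\delta = c_0(\veps_{max}+\widetilde\delta+\theta)$ simultaneously satisfies the admissibility constraint of Lemma \ref{lem:epscon} and produces the chain $\delta^2 k\gtrsim n\theta^2\widetilde\delta^m$, all while remaining inside the smallness regime dictated by Assumptions \ref{assumptions}. The core probabilistic work is entirely done by Lemma \ref{lem:epscon}; the present lemma is essentially its uniform-in-$x_i$ version dressed up in the $(\theta,\widetilde\delta)$ probability scale that is convenient for combining with Proposition \ref{prop:AuxiliaryDensity} and the discretization/interpolation arguments that follow.
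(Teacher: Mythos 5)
Your proof is correct but takes a genuinely different route from the paper's. You reduce the claim to the pointwise concentration estimate of Lemma \ref{lem:epscon} applied at each $x_i$ followed by a union bound, with the scale $\delta=c_0(\veps_{max}+\widetilde\delta+\theta)$ tuned so that the resulting error $C\delta\veps(x_i)$ gives the stated bound and the failure probability $\exp(-c\delta^2 k)$ is dominated by $\exp(-Cn\theta^2\widetilde\delta^m)$ via $\delta\geq c_0\theta$ and $n\widetilde\delta^m\leq Ck$. The paper does not invoke Lemma \ref{lem:epscon} here at all; instead it argues deterministically on the single high-probability event of Proposition \ref{prop:AuxiliaryDensity}. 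Using the surrogate density $\widetilde\rho_n$ (with $\|\rho-\widetilde\rho_n\|_{L^\infty}\leq C(\theta+\widetilde\delta)$) and the $\infty$-OT map $\widetilde T$ pushing $\widetilde\mu_n$ onto $\mu_n$ with displacement at most $\widetilde\delta$, it shows that the geodesic ball of radius $\gamma_2=\veps(x_i)+\widetilde C\bigl(\veps(x_i)^2+\widetilde\delta\veps(x_i)+\theta\veps(x_i)\bigr)$ has $\widetilde\mu_n$-measure at least $k/n$ --- a lower bound on the $\widetilde\mu_n$-measure of the annulus $B_\M(x_i,\gamma_2)\setminus B_\M(x_i,\veps(x_i))$ absorbs the deficit coming from curvature and the $L^\infty$ error on $\widetilde\rho_n$ --- and then, enlarging the ball by $\widetilde\delta$ to account for the transport displacement, concludes $\mu_n(B_\M(x_i,\gamma_2+\widetilde\delta))\geq k/n$, hence $\veps_k(x_i)\leq\gamma_2+\widetilde\delta$. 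The paper's route spends no probability beyond Proposition \ref{prop:AuxiliaryDensity}, which is the same event used throughout Section \ref{sec:proofsUnk}, so the conclusion holds simultaneously for all $i$ with no union bound and no conditioning-on-$x_i$ subtlety (which in your approach does need an explicit argument, as in the proof of Theorem \ref{thm:con1}). Your approach is more self-contained, reusing the concentration machinery of Section \ref{sec:sknn} in place of the transport-map construction, at the cost of building a separate exceptional set; and as a small bonus it produces exactly the error term as stated, whereas the paper's route picks up an extra additive $\widetilde\delta$ from the transport enlargement that is carried along explicitly when the lemma is later applied.
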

\begin{proof}
	We prove the upper bound. The lower bound is proved similarly.
	
	Let $\widetilde \rho_n$ be the density from Proposition \ref{prop:AuxiliaryDensity}, which exists with probability at least $1- Cn \exp(-n \theta^2 \widetilde \delta^m)$. First of all for $x \in \M$ and $0< \gamma_1 < \gamma_2$ small enough define
	\[ A(x, \gamma_1, \gamma_2):= B_\M(x, \gamma_2) \setminus B_\M(x, \gamma_1). \]
	Then, we see that
	\[  C_1 \gamma_1^{m-1}(\gamma_2 - \gamma_1 )  \leq \widetilde \mu_n( A(x,\gamma_1, \gamma_2 ) \leq C_2 \gamma_1^{m-1}(\gamma_2 - \gamma_1 )  \]
	where $C_1, C_2$ do not depend on $x$ nor $\gamma_1, \gamma_2$ as long as these numbers are small enough.
	
	Pick $\gamma_2 = \veps(x)+ \widetilde C (\veps(x)^2 + \widetilde \delta + \theta )$ for a large enough constant $\widetilde C$ that will be chosen soon. We also let $\gamma_1 = \veps(x)$. Then,  
	\begin{align*}
	\widetilde{\mu}_n \left(B_\M(x_i, \gamma_2) \right) &\geq  \int_{B_\M(x_i, \veps(x_i))}     \widetilde  \rho_n(x)  d\vol_\M(x) + \widetilde{\mu}_n (A(x, \gamma_1, \gamma_2))
	\\&  \geq (1- C\veps(x_i) )(1- C(\theta + \delta ))\rho(x_i) \vol_\M(B_\M(x_i, \veps(x_i))) + \widetilde{\mu}_n (A(x, \gamma_1, \gamma_2))
	\\ &\geq   (1- C \veps(x_i))(1- C(\theta + \delta ))(1- CK\veps(x_i)^{m+2}) \rho(x_i) \alpha_m (\veps(x_i))^m + \widetilde{\mu}_n (A(x, \gamma_1, \gamma_2))
	\\& \geq   (1- C \veps(x_i))(1- C(\theta + \delta ))(1- CK\veps(x_i)^{m+2}) \frac{k}{n} + \widetilde{\mu}_n (A(x, \gamma_1, \gamma_2))
	\\ & \geq   \frac{k}{n} -  C (\veps(x_i) + \widetilde \delta + \theta)\veps(x_i)^{m}  + \widetilde{\mu}_n (A(x, \gamma_1, \gamma_2))
	\\&  \geq   \frac{k}{n} -  C (\veps(x_i) + \widetilde \delta + \theta)\veps(x_i)^{m}     + C_1 \veps(x_i)^{m-1}(\widetilde C (  \veps(x_i)^2 + \widetilde \delta \veps(x_i) + \theta \veps(x_i) ) ).
	\end{align*}
	We pick $\widetilde C$ precisely so that the above is greater than $k/n$. Finally, we see that
	\[ \mu_n(  B_\M(x_i, \gamma_2 + \widetilde \delta ) ) = \widetilde \mu_n \left( \widetilde{T}^{-1}(B_\M(x_i, \gamma_2 + \widetilde \delta)) \right) \geq \widetilde \mu_n(B_\M(x_i, \gamma_2 )) \geq k/n,   \]
	from where it now follows that $\veps_k(x_i)\leq \gamma_2 + \widetilde \delta$.

\end{proof}

\begin{lemma}
Let $r: \M \rightarrow (0,\infty)$ be an arbitrary function which is  bounded away from zero and for which $r_{max}$ is sufficiently small. Then, for any $f \in L^2(\mu)$ we have
	\[  E_r(f) \leq (1+ Cr_{max})(1+\beta_r)^{2m+4} \sigma_\eta   D_{1-2/m}(f).\] 
	Where in the above $\beta_r$ is defined as
	\begin{equation}
	 \beta_r := \sup_{x,y \in \M, d_\M(x,y) \leq r_{max}}\frac{r(x)-r(y)}{r_{min}}  
	 \label{eqn:betar}
	\end{equation}
	
	\label{nonlocalLessLocal}
\end{lemma}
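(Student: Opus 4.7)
My plan is to adapt the constant-$r$ argument of Lemma 5 in \cite{trillos2018spectral} to the variable length-scale setting. The three main ingredients are a Cauchy--Schwarz bound on $(f(x)-f(y))^2$ along the geodesic from $y$ to $x$, a two-step change of variables that transfers the outer spatial integration to the point where $|\nabla f|^2$ is evaluated, and a pointwise comparison between $r(y)$ and $r(z)$ on the effective support of the resulting integrand, which ultimately produces the $(1+\beta_r)^{2m+4}$ factor.

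By a density argument I may assume $f\in C^1(\M)$. For $y\in\M$ and $v\in B(0,r(y))\subset T_y\M$, Cauchy--Schwarz applied to $f(\exp_y(v))-f(y)=\int_0^1 \frac{d}{dt}f(\exp_y(tv))\,dt$ gives $(f(\exp_y(v))-f(y))^2 \leq |v|^2\int_0^1 |\nabla f(\exp_y(tv))|^2\,dt$. Substituting this into $E_r(f)$, passing to normal coordinates $x=\exp_y(v)$ (which is a diffeomorphism on $B(0,r_{max})$ when $r_{max}<\eps_\M$), moving the $t$-integral to the outside via Fubini, and then applying in sequence the substitutions $w=tv$ (Jacobian $t^{-m}$) and $z=\exp_y(w)$ (Jacobian $J_y(w)^{-1}$), we arrive at a bound of the form
\[E_r(f)\leq \int_0^1\int_\M |\nabla f(z)|^2\, I_t(z)\, d\vol_\M(z)\, dt,\]
where, writing $w=\exp_y^{-1}(z)$,
\[I_t(z)=\int_\M \one_{\{d_\M(y,z)\leq tr(y)\}}\, \eta\!\left(\tfrac{d_\M(y,z)}{tr(y)}\right) \frac{d_\M(y,z)^2}{t^{m+2} r(y)^{m+2} \rho(y)^{2/m-1}}\cdot\frac{J_y(w/t)}{J_y(w)}\, d\vol_\M(y).\]
The Rauch comparison bound \eqref{eq:dist} controls $J_y(w/t)/J_y(w)=1+O(Kr_{max}^2)$ uniformly on the support.

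It then remains to show $I_t(z)\leq (1+\beta_r)^{2m+4}(1+Cr_{max})\sigma_\eta\,\rho(z)^{1-2/m}$, after which integrating in $z$ (noting that the $t$-integral is trivial) yields $\sigma_\eta D_{1-2/m}(f)$ and completes the proof. On the support of the integrand, $d_\M(y,z)\leq r_{max}$, so the definition \eqref{eqn:betar} of $\beta_r$ together with $r_{min}\leq r(z)\wedge r(y)$ yields $r(z)/(1+\beta_r)\leq r(y)\leq (1+\beta_r)r(z)$. Combined with the monotonicity of $\eta$ and its support in $[0,1]$, these inequalities give the pointwise majorization
\[\one_{\{d_\M\leq tr(y)\}}\, \eta\!\left(\tfrac{d_\M}{tr(y)}\right) r(y)^{-(m+2)} \leq (1+\beta_r)^{m+2} r(z)^{-(m+2)}\, \one_{\{d_\M\leq t(1+\beta_r)r(z)\}}\, \eta\!\left(\tfrac{d_\M}{t(1+\beta_r)r(z)}\right).\]
Using the $C^1$ regularity of $\rho$ to write $\rho(y)^{1-2/m}\leq (1+Cr_{max})\rho(z)^{1-2/m}$, then passing to normal coordinates $u\in T_z\M$ centered at $z$ and rescaling $u=t(1+\beta_r)r(z)\,\xi$, the remaining $y$-integral reduces to $(1+\beta_r)^{m+2}r(z)^{m+2}\sigma_\eta(1+O(Kr_{max}^2))$. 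Collecting all factors yields the claimed bound on $I_t(z)$.

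The main obstacle is the asymmetry introduced by the fact that the argument of $r$ in the kernel is $y$ rather than $z$: after Fubini, the outer integrand must be expressed entirely in terms of $z$, yet both the denominator $r(y)^{m+2}$ and the effective support of the $y$-integration depend on $r(y)$. The comparison $r(y)\leq(1+\beta_r)r(z)$ transfers both dependencies to $z$ at the cost of a squared penalty: one factor of $(1+\beta_r)^{m+2}$ comes from pulling $1/r(y)^{m+2}$ outside the integral, and a second factor of $(1+\beta_r)^{m+2}$ arises from enlarging the ball $B_\M(z,tr(y))$ to $B_\M(z,t(1+\beta_r)r(z))$ during the final rescaling step, producing altogether the exponent $2m+4=2(m+2)$.
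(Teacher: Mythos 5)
Your plan takes a genuinely different route from the paper: you expand everything in explicit normal coordinates around $y$ and perform two explicit changes of variables ($w=tv$, then $z=\exp_y(w)$), whereas the paper reformulates the problem on the tangent bundle, uses the geodesic flow $\Phi_t$ and the Liouville measure-preservation of $\Phi_t$ to change variables with unit Jacobian, and only passes to explicit coordinates at the very end. Your diagnosis of where the exponent $2m+4 = 2(m+2)$ comes from (one factor of $(1+\beta_r)^{m+2}$ from majorizing $r(y)^{-(m+2)}$ by $r(z)^{-(m+2)}$, and a second from enlarging the support to $B_\M(z, t(1+\beta_r)r(z))$) is correct and aligns with where the paper picks up those two factors.

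However, there is a genuine quantitative gap at the very first step. You apply Cauchy--Schwarz twice: once on the $t$-integral and once more on the inner product, obtaining
\[
(f(\exp_y(v))-f(y))^2 \leq |v|^2\int_0^1 |\nabla f(\exp_y(tv))|^2\,dt.
\]
The second Cauchy--Schwarz, $\langle\nabla f(\gamma(t)),\dot\gamma(t)\rangle^2 \leq |\nabla f(\gamma(t))|^2\,|v|^2$, destroys the directional structure and is lossy on average by a factor of $m$. Concretely, after your two changes of variables and the majorizations, the remaining $y$-integral in $I_t(z)$ evaluates (after rescaling $u = t(1+\beta_r)r(z)\,\xi$) to
\[
(1+\beta_r)^{m+2}\,r(z)^{m+2}\,\Bigl(\int_{B(0,1)}|\xi|^2\,d\xi\Bigr)\,(1+O(Kr_{max}^2))
= (1+\beta_r)^{m+2}\,r(z)^{m+2}\,(m\,\sigma_\eta)\,(1+O(Kr_{max}^2)),
\]
since $\int_{B(0,1)}|\xi|^2\,d\xi = m\int_{B(0,1)}\xi_1^2\,d\xi = m\,\sigma_\eta$, not $\sigma_\eta$. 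So as written your argument proves only
$E_r(f) \leq (1+Cr_{max})(1+\beta_r)^{2m+4}\,m\,\sigma_\eta\,D_{1-2/m}(f)$,
off by a factor of $m$ from the claimed bound. The paper avoids this by keeping the directional derivative $df(\Phi_t(\xi)) = \langle\nabla f, v\rangle$ intact through all the manipulations (this is precisely why it formulates things on the tangent bundle), and only integrates over the velocity variable at the end, where
\[
\int_{B(0,\hat r)}\langle\nabla f(x),v\rangle^2\,dv = \sigma_\eta\,|\nabla f(x)|^2\,\hat r^{m+2}.
\]
To repair your proof you would need to carry the squared \emph{projection} $\langle\nabla f(z),\tfrac{w}{t}\rangle^2$ (rather than $|\nabla f(z)|^2\,\tfrac{|w|^2}{t^2}$) into $I_t(z)$, so that the final moment over $y$ is the anisotropic second moment and yields $\sigma_\eta$; this is possible but requires tracking how the geodesic velocity is transported under the substitution $z=\exp_y(w)$, which is exactly the bookkeeping that the Liouville-measure argument circumvents.
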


As we will later see, in our setting we will work with $r$ for which $r_{max}, \beta_r \ll 1,$  so that Lemma \ref{nonlocalLessLocal} implies that $E_r(f)$ is smaller than a quantity that up to leading order is equal to $\sigma_\eta D_{1-2/m}(f)$.

\begin{proof}
	
By density of smooth functions in $H^1(\mu)$ it is enough to prove the result for $f$ smooth.

In what follows we will use the geodesic flow $\Phi_t: T \M \rightarrow T \M$, which maps a point $(x,v) \in \M \times T_x \M$ in $\M$'s tangent bundle into the point $\Phi_t(x,v) =(\exp_x(tv), d\exp_x(v))$ (i.e. flow $x$ for $t$ seconds along the geodesic emanating from it with initial velocity $v$). We will use $\xi=(x,v)$ to represent a generic point in the tangent bundle and abuse notation slightly to write things like $ g(\xi)=g(x)$, when $g$ is a real valued function on $\M$. We will also use $df$ (i.e. the differential of $f$) which is the 1-form that when acting on a tangent vector $v$ returns the directional derivative of $f$ in the direction $v$, and will write things like $df(\xi)$ to denote the directional derivative of $f$ at the point $x$ in the direction $v$.

With this notation in hand, and following the proof of Lemma 3.3 in \cite{BIK}, we obtain for every $x \in \M$ 
\[  \int_{B_\M(x, r(x))} |f(y)- f(x)|^2 d\vol_\M(y) \leq \int_{B(0, r(x)) \subseteq T_x \M}\int_{0}^1 |df(\Phi_t(x,v))|^2 dt dv.     \]
Dividing by $r(x)^{m+2}  \rho(x)^{2/m-1}$ and integrating over $x$ we get:
\begin{align}
\begin{split}
 &\int_\M \int_{B_\M(y, r(y))} \frac{|f(y)- f(x)|^2}{r(x)^{m+2}  \rho(x)^{2/m-1}} d\vol_\M(y) d \vol_\M(x)
 \\ & \leq (1+\beta_r)^{m+2} (1+ C  r_{max}) \int_{\M}\int_{B_m(0, r(x)) \subseteq T_x \M}\int_{0}^1 \frac{|df(\Phi_t(y,v))|^2}{r(\Phi_t(x,v))^{m+2}  \rho(\Phi_t(x,v))^{2/m-1}} dt dv d\vol_\M(x)
  \\& = (1+\beta_r)^{m+2} (1+ C r_{max}) \int_{0}^1 \int_{\mathcal{B}_r} \frac{|df(\Phi_t(\xi  ))  |^2}{r(\Phi_t(\xi))^{m+2}  \rho(\Phi_t(\xi))^{2/m-1}}  d\vol_{T\M}(\xi) dt, 
  \end{split}
  \label{eqn:auxNonlocalknn}
\end{align}
where for the first inequality we have used the definition of $\beta_r$ and the fact that for every $y$ with $d_\M(x,y) \leq r_{max}$ we have 
\[\left(  \frac{\rho(y)}{\rho(x)}\right)^{2/m-1} =  \left(  1+ \frac{\rho(y)-\rho(x)}{\rho(x)}\right)^{2/m-1}\leq 1+ Cr_{max}. \]
Finally, in the last line we use $\vol_{T\M}$ to denote the volume form on $T\M$, and
\[  \mathcal{B}_r := \{  (x,v) \in T\M \: : \:  x \in \M , \quad v\in B_m(0,r(x)) \subseteq T_x\M  \}.    \]

Now let $t \in (0,1)$ and let $(x,v)  \in \mathcal{B}_r$. Define $\tilde{x}:= \exp_{x}(-tv)$ and $\tilde v := d\exp_x(-v)$. It is straightforward to see that 
\[  \Phi_t(\tilde x, -\tilde v) = (x,v).\]  
Moreover, 
\[ \lVert -\tilde v  \rVert_{\tilde x} =  \lVert d \exp_{x}(-v)  \rVert_{\tilde x} = \lVert  -v   \rVert_{x}\leq  r(x) = r( \tilde x) + (r(x) - \tilde r (x)) \leq \hat{r}(\tilde x), \]
where 
\[\hat{r}(\tilde x):= (1+  \alpha_r) r( \tilde x).\]
We have shown that if $(x,v) \in \mathcal{B}_r$, then for every $t \in (0,1)$ $(x,v)\in \Phi_t\left(\mathcal{B}_{\hat{r}}\right)$. That is,
\[ \mathcal{B}_r \subseteq \Phi_t\left(\mathcal{B}_{\hat{r}}\right).  \]
From this we deduce that for all $t \in (0,1)$ we have
 \[  \int_{\mathcal{B}_r} \frac{|df(\Phi_t(\xi  ))  |^2}{r(\Phi_t(\xi))^{m+2}  \rho(\Phi_t(\xi))^{2/m-1}}  d\vol_{T\M}(\xi)   \leq  \int_{\Phi_t(\mathcal{B}_{\hat{r}})} \frac{|df(\Phi_t(\xi  ))  |^2}{r(\Phi_t(\xi))^{m+2}  \rho(\Phi_t(\xi))^{2/m-1}} d\vol_{T\M}(\xi)   \]
In turn, the right hand side is equal to  
\[ \int_{\mathcal{B}_{\hat{r}}} \frac{|df(\Phi_t(\xi  ))  |^2}{r(\xi)^{m+2}  \rho(\xi)^{2/m-1}} d\vol_{T\M}(\xi)  \]
which follows from the well known fact that $\Phi_t$ preserves $\vol_{T\M}$ (i.e. it pushes forward $\vol_{T\M}$ into itself ). Integrating over $t$ and using \eqref{eqn:auxNonlocalknn} we deduce that
\[    E_r(f) \leq  (1+\beta_r)^{m+2} (1+ Cr_{max})  \int_{\M} \frac{1}{r(x)^{m+2}\rho(x)^{2/m -1} }\left( \int_{B_m(0, \hat{r}(x))}|df(x,v)  |^2 dv \right) d\vol_{\M}(x). \]    
Finally, a simple computation shows that
\[ \int_{B_m(0, \hat{r}(y))}|df(x,v)  |^2 dv=  \int_{B_m(0, \hat{r}(y))}|\langle \nabla f(x) , v \rangle|^2 dv = \sigma_\eta |\nabla f(x)|^2 \hat{r}(x)^{m+2}, \]
and so
\begin{align*}
E_r(f) & \leq  (1+\beta_r)^{m+2}  (1+ Cr_{max})  \sigma_\eta \int_{\M} \left( \frac{\hat{r}(x)}{r(x)} \right)^{m+2} |\nabla f(x)|^2\rho(x)^{1-2/m} d\vol_\M(x)
\\& \leq (1+\beta_r)^{2m+4}  (1+ Cr_{max}) \sigma_\eta D_{1-2/m}(f).
\end{align*}
This concludes the proof.

\end{proof}

\begin{lemma}
	Let $r: \M \rightarrow (0,\infty)$ be an arbitrary function which is  bounded away from zero and for which $r_{max}$ and $\beta_r$ (as defined in \eqref{eqn:betar}) are sufficiently small. Then, for any $f \in L^2(\mu)$ we have
	\[  \sigma_\eta D_{1-2/m}(\Lambda_r f) \leq (1+ C\beta_r^{2m+4} + Cr_max^2  )  E_{\hat{r}}(f),\]
	where the length-scale $\hat{r}$ in $E_{\hat{r}}(f)$ is of the form
	\[  \hat{r}(x)= (1+\beta_r)r(x).   \]
\label{lemma:localLessnonlocal}
\end{lemma}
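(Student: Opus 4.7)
The plan is to compute $\nabla\Lambda_r f(x)$ directly, apply Cauchy--Schwarz to obtain a pointwise bound on $|\nabla\Lambda_r f(x)|^2$, and then integrate against $\rho(x)^{1-2/m}$ and swap the order of integration by Fubini in order to recover an expression comparable to $E_{\hat r}(f)$. The crucial algebraic identity is $\psi'(t)=-\eta(t)t/\sigma_\eta$, immediate from \eqref{eqn:psi}; it is this identity that makes differentiation of the smooth kernel $\psi$ reintroduce the compactly supported kernel $\eta$ appearing in the definition of $E_r$ (and hence of $E_{\hat r}$).

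First I would use $\Lambda_r 1\equiv 1$ (which follows from the definition of $\tau$) to write
\[\nabla_x \Lambda_r f(x) = \frac{1}{\tau(x)}\int_\M \nabla_x K_{r(y)}(x,y)\,(f(y)-f(x))\,\rho(y)\,d\vol_\M(y),\]
and to compute $\nabla_x K_{r(y)}(x,y) = -\eta(d_\M(x,y)/r(y))\,d_\M(x,y)\,\nabla_x d_\M(x,y)/(\sigma_\eta\, r(y)^{m+2})$. Since $|\nabla_x d_\M(x,y)|=1$ and $d_\M(x,y)\leq r(y)$ on the support of the integrand, Cauchy--Schwarz yields
\[|\nabla\Lambda_r f(x)|^2 \leq \frac{A(x)}{\sigma_\eta^2\,\tau(x)^2}\int_\M \frac{\eta(d_\M(x,y)/r(y))}{r(y)^{m+2}}(f(y)-f(x))^2\rho(y)\,d\vol_\M(y),\]
with $A(x):=\int_\M \eta(d_\M(x,y)/r(y))\,r(y)^{-m}\rho(y)\,d\vol_\M(y)$. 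A Taylor expansion in normal Riemannian coordinates at $x$, using $\rho(\exp_x(v))=\rho(x)+O(r_{max})$, $J_x(v)=1+O(Kr_{max}^2)$, the bound $|r(y)-r(x)|\leq \beta_r r_{min}$ on the relevant geodesic balls, and the normalizations $\int_{\R^m}\eta(|u|)\,du=\int_{\R^m}\psi(|u|)\,du=1$ (the latter via integration by parts and the definition of $\sigma_\eta$), allows one to sandwich $\tau(x)$ and $A(x)$ between $\rho(x)(1+\beta_r)^{\pm(m+2)}(1+O(r_{max}^2))$, so that $A(x)/\tau(x)^2 \leq \rho(x)^{-1}(1+\beta_r)^{2m+4}(1+O(r_{max}^2))$.

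Multiplying by $\sigma_\eta\rho(x)^{1-2/m}$, integrating in $x$, and swapping by Fubini yields
\[\sigma_\eta D_{1-2/m}(\Lambda_r f) \leq \frac{(1+\beta_r)^{2m+4}(1+O(r_{max}^2))}{\sigma_\eta}\int_\M\int_\M \frac{\rho(x)^{-2/m}\,\eta(d_\M(x,y)/r(y))}{r(y)^{m+2}}(f(y)-f(x))^2\rho(y)\,d\vol(x)\,d\vol(y).\]
On the support of $\eta(d_\M(\cdot,y)/r(y))$ we have $\rho(x)^{-2/m}=\rho(y)^{-2/m}(1+O(r_{max}))$ and this support is contained in that of $\eta(d_\M(\cdot,y)/\hat r(y))$, so the right-hand side is bounded by $(1+\beta_r)^{2m+4}(1+O(r_{max}^2))\,E_{\hat r}(f)$, as desired. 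The main obstacle will be the careful bookkeeping for the exponent $2m+4$ on $\beta_r$: one factor $(1+\beta_r)^{m+2}$ arises when upper-bounding $A(x)$ by replacing $r(y)$ by its minimum on $B_\M(x,\hat r(x))$, and a symmetric factor appears when lower-bounding $\tau(x)$ by replacing $r(y)$ by its maximum on the same ball, mirroring the two-sided length-scale estimate in the proof of Lemma~\ref{nonlocalLessLocal}.
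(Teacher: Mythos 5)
Your proposal has two genuine gaps, one of which is fatal to the sharp constant.

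\textbf{The missing normalizer term.} Your formula for the gradient,
\[\nabla_x \Lambda_r f(x) = \frac{1}{\tau(x)}\int_\M \nabla_x K_{r(y)}(x,y)\,(f(y)-f(x))\,\rho(y)\,d\vol_\M(y),\]
is incorrect: even using $\Lambda_r 1 \equiv 1$ to write $\Lambda_r f - f = \tau(x)^{-1}\int K_{r(y)}(x,y)(f(y)-f(x))\,d\mu(y)$, differentiating in $x$ produces the additional term $\nabla(\tau^{-1})(x)\int K_{r(y)}(x,y)(f(y)-f(x))\,d\mu(y)$, which does \emph{not} vanish. The paper decomposes $\nabla\Lambda_r f = \tau^{-1}A_1 + A_2$ precisely to handle this second piece, using the bound $|\nabla(\tau^{-1})|\leq Cr_{max}$ (Lemma 5.1 in \cite{BIK}); its contribution is $O(r_{max}^4)E_{\hat r}(f)$, hence subsumed in the $Cr_{max}^2$ error. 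Your proof simply drops this term.

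\textbf{The wrong Cauchy--Schwarz split.} The more serious problem is your pointwise bound
\[|\nabla\Lambda_r f(x)|^2 \leq \frac{A(x)}{\sigma_\eta^2\,\tau(x)^2}\int_\M \frac{\eta(d_\M(x,y)/r(y))}{r(y)^{m+2}}(f(y)-f(x))^2\rho(y)\,d\vol_\M(y), \qquad A(x)=\int_\M\frac{\eta}{r^m}\rho\,d\vol_\M.\]
You obtain this by bounding $|d_\M(x,y)\nabla_x d_\M(x,y)| = |\exp_x^{-1}(y)| \leq r(y)$ before applying Cauchy--Schwarz, so that the first Cauchy--Schwarz factor is $A(x)\approx\rho(x)$, which carries no $\sigma_\eta$. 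Tracking constants (with $A\approx\tau\approx\rho(x)$) the prefactor becomes $A/(\sigma_\eta^2\tau^2)\approx 1/(\sigma_\eta^2\rho(x))$, and after multiplying by $\sigma_\eta\rho(x)^{1-2/m}$ and integrating, your own display correctly shows a leftover $1/\sigma_\eta$ in front of the double integral. Since $\sigma_\eta=\int|y_1|^2\eta(|y|)\,dy < \int\eta(|y|)\,dy = 1$, you get $\sigma_\eta D_{1-2/m}(\Lambda_r f)\leq \sigma_\eta^{-1}(1+o(1))E_{\hat r}(f)$, a strictly weaker statement than the lemma, and your final sentence (that this is $\leq (1+\beta_r)^{2m+4}(1+O(r_{max}^2))E_{\hat r}(f)$) does not follow. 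This $1/\sigma_\eta$ loss destroys the downstream argument, which crucially requires the $1+o(1)$ constant in Proposition \ref{prop:localnonlocalkNN} to produce two-sided eigenvalue bounds. The paper's Cauchy--Schwarz keeps the factor $\langle \exp_x^{-1}(y),w\rangle$ (rather than replacing it by $d_\M(x,y)$) and makes the split so that the ``mass'' factor is $\int_{B(0,\hat r_0)}\langle v,w\rangle^2\eta(|v|/\hat r_0)\,dv = \sigma_\eta \hat r_0^{m+2}$ by radial symmetry; this $\sigma_\eta$ cancels one power in the denominator and is exactly what produces the clean $1+o(1)$ constant. You need to replicate that split rather than discarding $\langle v,w\rangle$ early.

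The remainder of your sketch (normal-coordinate expansion of $\tau$, Fubini, and the $\rho(x)\approx\rho(y)$ replacement) is sound and matches the paper's structure once these two issues are fixed.
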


\begin{proof}

We first notice that for a fixed function $f \in L^2(\mu)$, the gradient of the function $\Lambda_r f$ can be written as
\[\nabla \Lambda_r f (x) = \tau^{-1}(x) A_1(x) +A_2(x), \] 
where 
\[ A_1(x) \coloneqq \int_{\mathcal{A}(x)} \nabla K_r(\cdot,y)(x) (f(y)-f(x)) d\vol_\M(y),\]
\[ A_2(x) \coloneqq \nabla (\tau^{-1})(x)\int_{\mathcal{A}(x)} K_{r(y)}(x,y) (f(y)-f(x)) d\vol_\M(y),\]
and the region $\mathcal{A}(x)$ is defined as
\[ \mathcal{A}(x):= \{y \in \M \: : \: \lvert x-y \rvert \leq r(y). \}  \]
\nc

First $\abs{A_1(x)}= \applied{A_1(x)}{w}$ for some unit vector $w\in T_x\M$. 
Therefore, 
\begin{align*}
\abs{A_1(x)}&= \lvert \applied{ A_1(x)}{ w} \rvert \\
&= \left \lvert  \int_{\mathcal{A}(x)} \frac{1}{\sigma_\eta r^{m+2}(y) } \eta\left( \frac{d_\M(x,y)}{r(y)} \right) (f(y) - f(x)) \applied{\exp_x^{-1}(y)}{w} d\vol_\M(y) \right \rvert \\ 
&=  \left \vert \int_{\mathcal{A}}\frac{1}{\sigma_\eta r^{m+2}(v)} \eta\left(\frac{\abs{v}}{r(v)}\right) \varphi(v) \applied{v}{w} J_x(v) dv \right \rvert \\
&\leq   \int_{\mathcal{A}}\frac{1}{\sigma_\eta r^{m+2}(v)} \eta\left(\frac{\abs{v}}{r(v)}\right) \lvert \varphi(v) \rvert  \lvert \applied{v}{w} \rvert J_x(v) dv 
\end{align*}
where $\varphi(v) \coloneqq f(\exp_x(v))-f(x)$, $\mathcal{A}:= \{ v \in T_x \M \: : \: |x- \exp_x(v) | \leq r(v) \}$ and with a slight abuse of notation we write $r(v)=r(\exp_x(v))$ and also $\hat r(v)= \hat r(\exp_x(v))$. Notice that by definition of $\hat{r}$ we have
\[ \hat r_0 := \hat{r}(0) \geq r(v) \]
and so
\[ \abs{A_1(x)}  \leq \left( 1+ \beta_r \right)^{m+2}  \int_{B(0,\hat{r}_0)}\frac{1}{\sigma_\eta \hat{r}_0^{m+2}} \eta\left(\frac{\abs{v}}{\hat{r}_0}\right) \lvert \varphi(v) \rvert  \lvert \applied{v}{w} \rvert J_x(v) dv.  \]
By the Cauchy-Schwartz inequality,
\begin{align*}
\frac{\abs{A_1(x)}^2}{\left( 1+ \beta_r \right)^{2m+4}} &\leq \left( \frac{1}{\sigma_\eta \hat{r}_0^{m+2}}\right)^2  \left( \int_{B(0,\hat r_0)} \abs{\varphi(v)}^2 J_x(v)^2 \eta \left(\frac{\abs{v}}{\hat r_0}\right) dv \right) \left( \int_{B(0,\hat r_0)} \langle v,w \rangle ^2 \eta \left(\frac{\abs{v}}{ \hat r_0}\right) dv \right) \\
& = \frac{1}{\sigma_\eta {\hat{r}_0}^{m+2}}\left( \int_{B(0,\hat{r}_0)} \abs{\varphi(v)}^2 J_x(v)^2 \eta \left(\frac{\abs{v}}{\hat{r}_0}\right) dv \right) 
\end{align*}
where, in the last step, we used radial symmetry to conclude that
\[ \int_{B(0,\hat{r}_0)} \applied{ v}{w}^2 \eta \left(\frac{\abs{v}}{\hat{r}_0}\right) dv  = \hat{r}_0^{m+2} \int_{B(0,1)} v_1^2 \eta(\abs{v}) dv = \hat{r}_0^{m+2} \sigma_\eta.\]
We obtain,
\begin{align*}
\frac{\abs{A_1(x)}^2}{\left( 1+ \beta_r \right)^{2m+4}} &\leq \frac{1+ C(r_{max}(1+ \beta_r))^2}{\sigma_\eta \hat{r}_0^{m+2}} \int_{B(0,\hat r_0)} \abs{\varphi(v)}^2\eta \left(\frac{\abs{v}}{\hat r_0}\right) J_x(v)  dv \\
&= \frac{1+ C(r_{max}(1+ \beta_r))^2}{\sigma_\eta \hat{r}(x)^{m+2}} \int_\M \eta\left(\frac{d_\M(x,y)}{\hat r(x)}\right) (f(y)-f(x))^2 d\vol_\M(y)
\end{align*}
\nc
Integrating this inequality with respect to $\rho^{1-2/m}(x) d \vol_\M(x)$ and using the Lipschitz continuity of $\rho$, we obtain 
\begin{align}
\begin{split}
\norm{A_1}_{L^2(\M,\rho^{1-2/m}\vol_\M)}^2 
&\leq { \left( 1+ \beta_r \right)^{2m+4} (1+ C(r_{max}(1+ \beta_r))^2)} 
\\& \cdot \frac{1}{{\sigma_\eta }}  \int_\M \int_{\M} \eta\left(\frac{d_\M(x,y)}{\hat r(x)}\right) \frac{\abs{f(y)-f(x)}^2}{\hat{r}(x)^{m+2} \rho(x)^{2/m-1}} d \vol_\M(y)  d\vol_\M(x) \\
&=  { \left( 1+ \beta_r \right)^{2m+4} (1+ C(r_{max}(1+ \beta_r))^2)} \frac{1}{\sigma_\eta}E_{\hat{r}}(f).
\end{split}
\label{eqn:A1ineq}
\end{align} 
Regarding $A_2$, first note that by Lemma 5.1 in \cite{BIK} we have
\begin{equation}
  (1+ C r_{max}^2)^{-1} \leq  \tau(x) \leq (1+ Cr_{max}^2)
  \label{eqn:tauineqs} 
\end{equation}
and also
\begin{equation*}
\abs{\nabla (\tau^{-1})}\leq C r_{max}
\label{ineq:gradtau}
\end{equation*}
Therefore, 
\begin{align*}
\abs{A_2(x)}^2 &\leq \abs{\nabla (\tau^{-1})(x)}^2 \tau(x) \int_\M \abs{f(y)-f(x)}^2K_{r(y)}(x,y) d\vol_\M(y) \\
&\leq   r_{max}^2 \int_\M \frac{1}{r(y)^m}\eta\left(\frac{d_\M(x,y)}{r(y)}\right) \abs{f(y)-f(x)}^2 d \vol_\M(y),
\end{align*}
where the first inequality follows from Cauchy-Schwartz inequality and in the second one we used 
\[ \psi(s) \leq \frac{1}{\sigma_\eta}\eta(s),\quad \forall s>0. \]
Integrating the estimate for $\lvert A_2(x)\rvert^2$ with respect to $\rho^{1-2/m}(x) d \vol_\M(x)$ while using the Lipschitz continuity of $\rho$ we obtain
\[  \norm{A_2}_{L^2(\M,p^{1-2/m}\vol_\M)}^2 \leq  C(1+ r_{max})^2 r_{max}^4 E_r(f)  \leq  C(1+ \beta_r)^{m+2}(1+ r_{max})^2 r_{max}^4 E_{\hat r}(f). \]
By combining the above with \eqref{eqn:A1ineq} and \eqref{eqn:tauineqs} estimates and the lower bound for $\tau$ we obtain the desired inequality.
\end{proof}

We will apply Lemma \ref{lemma:localLessnonlocal} with $r$ for which $r_{max}, \beta_r \ll 1,$. In particular $E_{\hat r}(f)$ (where $\hat r$ is to the first order equal to $r$) is greater than a quantity that up to leading order is equal to $\sigma_\eta D_{1-2/m}(f)$. This is the reverse inequality to the one in Lemma \ref{nonlocalLessLocal}.

\begin{proof}[Proof of Proposition \ref{prop:localnonlocalkNN}]
	
%

To prove the first inequality, let us start by recalling that $r_k(x_i,x_j) = \max\{ \veps_k(x_i), \veps_k(x_j) \}$ where $\veps_k$ is defined as in \eqref{eq:epsk}. Recall also that for arbitrary $u \in L^2(\mu_n)$ we can write
\[b_k(u):= \frac{1}{n^2} \left( \frac{n\alpha_m}{k} \right)^{1+2/m} \sum_{i,j} \eta\left(\frac{|x_i-x_j|}{r_k(x_i, x_j)} \right) ( u(x_i) - u(x_j) )^2.\]
Using the $\infty$-OT map $\widetilde T$ between $\widetilde \mu_n$ and $\mu_n$ we can apply the change of variables formula to rewrite $b_k(u)$ as
\begin{align*}
\begin{split}
 b_k(u) & =  \left( \frac{n\alpha_m}{k} \right)^{1+2/m} \int_\M\int_{\M} \eta \left(\frac{|\widetilde T(x)- \widetilde T (y)|}{\max \{ \veps_k(\widetilde T (x)), \veps_k(\widetilde{T}(y))   \}} \right) |   \widetilde P^* u (x) -  \widetilde P^* u (y) |^2  d\widetilde \mu_n(y)d\widetilde \mu_n(x)   
 \\& = \int_\M\int_{\M}  \frac{1}{\rho(\widetilde T (y))^{1+2/m}\veps(\widetilde T (y))^{m+2}}\eta \left(\frac{|\widetilde T(x)- \widetilde T (y)|}{\max \{ \veps_k(\widetilde T (x)), \veps_k(\widetilde{T}(y))   \}} \right) |   \widetilde P^* u (x) -  \widetilde P^* u (y) |^2  d\widetilde \mu_n(y)d\widetilde \mu_n(x).
 \end{split}
\end{align*}
where for the last line we have used the definition of $\veps$ in \eqref{eq:eps}. From Lemma \ref{lemma:evpskeps} we know that 
\[   \veps_k(\widetilde T (x))  \leq  \veps(\widetilde T(x)) + C  \veps_{max}^2 + C\veps_{max}\widetilde \delta + C \veps_{max} \theta. \]
Moreover, using the smoothness of $\rho$ and the definition of $\veps$ for $x,y$ with $|x-y|\leq \veps_{max}$ we have
\[  \veps(\widetilde T x) \leq \veps( \widetilde T y)  + C(\veps_{max}^2 + \widetilde \delta \veps_{max}).   \]
Thus, if $|\widetilde T (x) - \widetilde T(y)| \leq r_k(\widetilde T(x) , \widetilde T (y)) $, we have $|\widetilde T (x) - \widetilde T(y)| \leq  \veps( \widetilde T y)  + C(\veps_{max}^2 + \widetilde \delta \veps_{max}) $ and also
\[  |x-y|\leq  d_\M(x,y) \leq \veps( \widetilde T y)  + C(\veps_{max}^2 + \widetilde \delta \veps_{max} + C \theta \veps_{max}) + 2 \widetilde \delta=: r(y).\]
It follows that 
\begin{align*}
& b_k(u)  \leq \int_\M\int_{\M}  \frac{1}{\rho(\widetilde T (y))^{1+2/m}\veps(\widetilde T (y))^{m+2}}\eta \left(\frac{|\widetilde T(x)- \widetilde T (y)|}{ r(y)  } \right) |   \widetilde P^* u (x) -  \widetilde P^* u (y) |^2  d\widetilde \mu_n(y)d\widetilde \mu_n(x) 
 \\&\leq (1+ C \widetilde \delta )( 1+ C(\veps_{max}^2 + \widetilde \delta + C \theta \veps_{max})  )
 \\& \cdot   \int_\M\int_{\M}  \frac{1}{\rho(y)^{2/m-1}r(y)^{m+2}}\eta \left(\frac{|\widetilde T(x)- \widetilde T (y)|}{ r(y)  } \right) |   \widetilde P^* u (x) -  \widetilde P^* u (y) |^2  d\vol_\M(y)d\vol_\M(x)
 \\&= (1+ C \widetilde \delta )( 1+ C(\veps_{max} + \frac{\widetilde \delta}{\veps_{min}} + \theta ))E_r(\widetilde P^* u).
 \\ 
\end{align*}
Let $f \in L^2(\mu)$ and let $u:= \widetilde P f$ in the above estimate to obtain  
\[ b_k(\widetilde P f) \leq  (1+ C \widetilde \delta )( 1+ C(\veps_{max} + \frac{\widetilde \delta}{\veps_{min}} + \theta ))E_r(\widetilde P^* \widetilde P f) \leq (1+ C \widetilde \delta )( 1+ C(\veps_{max} + \frac{\widetilde \delta}{\veps_{min}} + \theta ))E_r(f), \]
where in the last line we have used Jensen's inequality to relate $E_r(\widetilde P^* \widetilde P f)$ and $E_r(f)$. On the other hand, from Lemma \ref{nonlocalLessLocal} we have
\[ E_r(f) \leq (1+ Cr_{max})(1+\beta_r)^{2m+4} \sigma_\eta   D_{1-2/m}(f).\] 
However for the function $r$ defined above the quantity $\beta_r$ satisfies 
\[ \beta_r \leq  C \veps_{max}, \]
so
\[   b_k(\widetilde P f) \leq ( 1+ C(\veps_{max} + \frac{\widetilde \delta}{\veps_{min}} + \theta ))\sigma_\eta D_{1-2/m}(f),  \]
proving in this way the desired inequality thanks to Remark \ref{rem:emax}.

The second inequality is proved very similarly using the lower bound in Lemma \eqref{ineq:vepsk}, which allows us to show that
\begin{equation}
E_{r}(\widetilde P ^* u) \leq (1+ C(\veps_{max} + \frac{\widetilde \delta}{\veps_{min}} + \theta))b_k(u), \quad \forall u \in L^2(\mu_n), 
\label{ineq:kNNnonlocaldiscrete}
\end{equation}
for $r$ of the form $r(y) = \veps(y)- C\veps_{max}^2 - C\widetilde \delta  - C \theta \veps_{max} $. Lemma \ref{lemma:localLessnonlocal} in turn allows us to bound $\sigma_\eta D_{1-2/m}(\widetilde \I u ) $ with $E_{r}(\widetilde P ^* u)$  (up to leading order).

\nc

%

\end{proof}

\begin{proof}[Proof of Proposition \ref{prop:almostisometrieskNN}]
	The first part follows directly from 1) in Proposition \ref{prop:LocalNonLocal} since 
	\[ D_2(f) \leq  C D_{1-2/m}(f), \]
	given hat we have assumed that $\rho$ is bounded and also bounded away from zero.

	For the second part, notice that for every $f \in L^2(\mu)$ by Jensen's inequality we have
	\begin{align*}
	\begin{split}
	(\Lambda_r f(x) - f(x))^2   & =  \left(\frac{1}{\tau(x)} \int_{\M} \frac{1}{r(y)^m}\psi\left(\frac{|x-y|}{r(y)} \right)(f(y) - f(x))\rho(y) d\vol_\M(y)\right)^2
	\\& \leq \frac{1}{\tau(x)} \int_{\M} \frac{1}{r(y)^m} \psi\left(\frac{|x-y|}{r(y)} \right)(f(y) - f(x))^2\rho(y) d\vol_\M(y)
	\\& \leq C \int_{\M} \frac{1}{r(y)^m} \psi\left(\frac{|x-y|}{r(y)} \right)(f(y) - f(x))^2\rho(y) d\vol_\M(y)
	\\& \leq C \int_{\M} \frac{1}{r(y)^m} \eta\left(\frac{|x-y|}{r(y)} \right)(f(y) - f(x))^2\rho(y) d\vol_\M(y)
	\end{split}
	\end{align*}
	where the last inequality follows from \eqref{ineq:psieta}. Multiplying by $\rho(x) $ and integrating over all $x \in \M$ we get
	\begin{equation}
	\lVert \Lambda_r f - f  \rVert^2_{L^2(\mu)} \leq C  r_{max}^2 E_r(f),
	\label{eqn:LambdarkNN}
	\end{equation}
	where we have used again the fact that $\rho$ is bounded and bounded away from zero. Taking $r$ to be 
	\[ r(y):= \veps(y) - 2 \widetilde \delta   \] 
	we continue as in the second part of the proof of \eqref{prop:almostisometries} to obtain
	\begin{align*}
	\begin{split}
	\left \lvert \lVert \widetilde \I u \rVert _{L^2(\widetilde \mu _n)}   -  \lVert u \rVert _{L^2(\mu_n)}   \right \rvert & \leq   \lVert \Lambda_{\veps - 2\widetilde \delta} \widetilde P^* u  - \widetilde P^* u    \rVert _{L^2(\widetilde \mu _n)} 
	\\&  \leq \left( 1+ \frac{\lVert \rho - \widetilde \rho_n \rVert_\infty}{\rho_{min}} \right) \cdot \lVert \Lambda_{\veps - 2\widetilde \delta} \widetilde P^* u  - \widetilde P^* u    \rVert _{L^2(\mu)} 
	\\&  \leq  \left( 1+ \frac{\lVert \rho - \widetilde \rho_n \rVert_\infty}{\rho_{min}} \right) \cdot C \veps_{max} \sqrt{ E_{\veps - 2 \widetilde \delta}( \widetilde P^* u) }
	\\& \leq  C  \veps_{max} \sqrt{b_k(u)},
	\end{split}
	\end{align*}
	where for the third inequality we have used \eqref{eqn:LambdarkNN}, and for the last one we have used  \eqref{ineq:kNNnonlocaldiscrete}. Also, from \eqref{eqn:LambdarkNN} 
	\[   \lVert \widetilde I   u  \rVert_{L^2(\widetilde \mu_n)} = \lVert \Lambda_{r} \widetilde P^* u  \rVert_{L^2(\widetilde \mu_n)} \leq  C \lVert \widetilde P^* u \rVert_{L^2(\widetilde \mu_n)} = C \lVert u \rVert_{L^2(\mu_n)},   \]
 and hence
 \[  \left \lvert \lVert \widetilde \I u \rVert _{L^2(\widetilde \mu _n)}^2   -  \lVert u \rVert _{L^2(\mu_n)}^2   \right \rvert \leq C \veps_{max} \lVert u \rVert_{L^2(\mu_n)}\sqrt{b_k(u)}.   \]
 Finally, we use \eqref{ineq:intrhontilderhon} to compare $\lVert \widetilde \I u \rVert^2 _{L^2(\widetilde \mu _n)} $ and $\lVert \widetilde \I u \rVert^2 _{L^2( \mu)} $.

\end{proof}

With Proposition \ref{prop:localnonlocalkNN} and \ref{prop:almostisometrieskNN} in hand, the proofs of Theorems \ref{thm:evaluesks} and \ref{thm:evectorsks} are exactly as in the $\veps$-setting and so we skip the details.

\section{Convergence of eigenvectors in \texorpdfstring{$TL^2$}{Tl2} and convergence of graph Laplacian embeddings}	
\label{sec:TL2convergence}

To establish Theorem \ref{cor:WassersteinIneq} it will be convenient to recall the definition of the $TL^2$ space presented in \cite{trillos2018variational}. Here we consider $\R^L$-valued functions. 

We define the set
\[ TL^2(\M; \R^L):= \left\{  (\gamma, H) \: : \: \gamma \in \mathcal{P}(\M) , \quad H \in L^2(\gamma; \R^L)   \right\},  \]
and the metric
\[ \left(d_{TL^2}( (\gamma, H) , (\widetilde \gamma,  \widetilde H)  ) \right)^2 := \min _{\pi\in \Gamma(\gamma, \widetilde \gamma)} \int_{\M \times \M}d_\M(x,y)^2 d \pi(x,y)  + \int_{\R^L \times \R^L} | H(x) - \widetilde{H}(y)|^2 d \pi(x,y).   \]
In the above $\mathcal{P}(\M)$ denotes the space of Borel probability measures on $\M$, and for $\gamma \in \mathcal{P}(\M)$, $L^2(\gamma;\R^L)$ denotes the $\R^L$-valued $L^2$ functions with respect to $\gamma$. $\Gamma(\gamma, \widetilde \gamma)$ denotes the set of couplings or transport plans between $\gamma$ and $\widetilde \gamma$. Notice that if we remove the second term on the right hand side of the definition of $d_{TL^2}$ we get,
\[W_\M^2(\gamma, \widetilde \gamma) := \min _{\pi\in \Gamma(\gamma, \widetilde \gamma)} \int_{\M \times \M}d_\M(x,y)^2 d \pi(x,y) \] 
which is the square of the 2-Wasserstein distance between $\gamma$ and $\widetilde \gamma$ (using the intrinsic geometry of $\M$).

For our purposes, the key property of the $TL^2$-metric is that it gives an upper bound for the Wasserstein distance between induced embeddings. 
\begin{lemma}
	\label{WassersteinIneq}
	Let $(\gamma,H),  ( \widetilde \gamma ,\widetilde{H}) \in TL^2(\M; \R^L)$. Then for all $\pi \in \Gamma(\gamma, \widetilde \gamma)$ we have
	\[ W_2^2(H_\sharp \gamma , \widetilde H _\sharp \widetilde \gamma) \leq \int_{\R^L \times \R^L} | H(x) - \widetilde{H}(y)|^2 d \pi(x,y).  \]
	In particular,
	\[  W_2( H_{\sharp } \mu , \widetilde{H}_{\sharp} \widetilde \gamma   )  \leq  d_{TL^2}( (\gamma, H) , (\widetilde \gamma,  \widetilde H))  .  \]
\end{lemma}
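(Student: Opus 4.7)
The plan is to exploit the definition of $W_2^2$ as an infimum over couplings by manufacturing, from the given $\pi\in\Gamma(\gamma,\widetilde\gamma)$, a concrete coupling between the pushforward measures $H_\sharp\gamma$ and $\widetilde H_\sharp\widetilde\gamma$. The natural candidate is the pushforward of $\pi$ itself by the product map $\Phi\colon \M\times\M\to \R^L\times\R^L$, $\Phi(x,y):=(H(x),\widetilde H(y))$.

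First I would verify that $\Phi_\sharp\pi$ is indeed an admissible transport plan for $W_2(H_\sharp\gamma,\widetilde H_\sharp\widetilde\gamma)$. This is a direct check on marginals: for any Borel $A\subseteq\R^L$, the first marginal evaluated on $A$ equals $\pi(\Phi^{-1}(A\times\R^L))=\pi(H^{-1}(A)\times\M)=\gamma(H^{-1}(A))=(H_\sharp\gamma)(A)$, and symmetrically for the second marginal. Hence $\Phi_\sharp\pi\in\Gamma(H_\sharp\gamma,\widetilde H_\sharp\widetilde\gamma)$.

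Second, I would apply the variational definition of $W_2^2$ together with the change-of-variables formula for pushforward measures:
\[
W_2^2(H_\sharp\gamma,\widetilde H_\sharp\widetilde\gamma)\leq \int_{\R^L\times\R^L}|u-v|^2\,d(\Phi_\sharp\pi)(u,v)=\int_{\M\times\M}|H(x)-\widetilde H(y)|^2\,d\pi(x,y),
\]
which is the first claimed inequality. (I am reading the integral on the right-hand side of the statement as being over $\M\times\M$, since $\pi$ is supported there; the appearance of $\R^L\times\R^L$ in the displayed statement looks like a typographical slip.)

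For the ``in particular'' conclusion, I would simply add the nonnegative term $\int_{\M\times\M} d_\M(x,y)^2\,d\pi(x,y)$ to the right-hand side of the inequality above, obtaining for every $\pi\in\Gamma(\gamma,\widetilde\gamma)$
\[
W_2^2(H_\sharp\gamma,\widetilde H_\sharp\widetilde\gamma)\leq \int_{\M\times\M} d_\M(x,y)^2\,d\pi(x,y)+\int_{\M\times\M}|H(x)-\widetilde H(y)|^2\,d\pi(x,y),
\]
and then taking the infimum over $\pi\in\Gamma(\gamma,\widetilde\gamma)$ on the right, which by definition yields $d_{TL^2}^2((\gamma,H),(\widetilde\gamma,\widetilde H))$. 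Taking square roots gives the stated bound. There is no real obstacle here; the argument is entirely measure-theoretic bookkeeping, and the only point to be careful about is the marginal computation that justifies using $\Phi_\sharp\pi$ as a competitor in the $W_2$ infimum.
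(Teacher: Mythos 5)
Your proof is correct and follows essentially the same route as the paper: pushing $\pi$ forward by the product map $(x,y)\mapsto(H(x),\widetilde H(y))$ to obtain an admissible coupling, then applying change of variables and the variational definition of $W_2$. You also correctly note (and the paper leaves implicit) that the integral on the right-hand side should be read over $\M\times\M$, and you supply the marginal verification that the paper dismisses as "straightforward."
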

\begin{proof}
To see this let $\pi \in \Gamma(\gamma, \widetilde \gamma)$ and define
\[ \hat{\pi}:= (H \times \widetilde H)_{\sharp} \pi,  \]
where $H \times \widetilde H$ is the map
\[ H \times \widetilde H : (x,y) \in \M \times \M \longmapsto (\hat{x},\hat{y})=(H(x), \widetilde H(y)) \in \R^L \times \R^L. \]
It is straightforward to see that $\hat \pi \in \Gamma( H_\sharp \gamma, \widetilde H_\sharp \widetilde \gamma )$. From the change of variables formula we see that
\[ W_2^2(  H_{\sharp } \gamma  , \widetilde{H}_{\sharp} \widetilde \gamma) \leq \int_{\R^L \times \R^L} |\hat x- \hat y|^2 d\hat{\pi}(\hat x, \hat y) = \int_{\M \times \M} |H(x)-\widetilde  H(y)|^2 d\pi(x, y),    \]
which implies the desired result.

\end{proof}

\begin{proof}[Proof of Theorem \ref{cor:WassersteinIneq}]
Let $\pi^*$ be an optimal transport plan between $\mu$ and $\mu_n$, that is, $\pi^*$ is such that 
\[W_2^2(F_\sharp \mu , F_{n \sharp} \mu_n) = \int_{\M \times \M} d_\M(x,y)^2 d\pi^*(x,y). \]
From Lemma \ref{WassersteinIneq} we know that
\[ W_2^2(F_\sharp \mu, \widetilde F_{n \sharp} \mu_n ) \leq \int_{\M \times \M } |F(x) - F_n(y)|^2 d\pi^*(x,y)= \sum_{l=1}^L\int_{\M \times \M } |u_l(x) - f_l(y)|^2 d\pi^*(x,y). \] 
Using the regularity of the eigenfunctions $f_l$ we can bound the above with
\begin{align*}
\begin{split}
&2\sum_{l=1}^L\int_{\M \times \M } |u_l(x) - f_l(y)|^2 d\pi^*(x,y) +2 C \sum_{l=1}^L\int_{\M \times \M } d_\M(x,y)^2 d\pi^*(x,y) 
\\&\leq   2\sum_{l=1}^L\lVert u_l- f_l  \rVert_{L^2(\mu_n)}^2    +   2CL W_2^2(\mu, \mu_n).
\end{split}
\end{align*}
Combining with the results from Theorem \ref{thm:evectorsveps} we get the desired inequality. 

\end{proof}

%
%
%
%
%
%
%

\appendix

\section{Proof of Proposition \ref{prop:AuxiliaryDensity} }

\label{Appendix}

\begin{proof}
	
First we notice that for small enough but macroscopic (i.e. fixed) $r>0$ we can find a partition of $\M$ (up to overlaps of $\vol_\M$ measure zero) into closed sets $V_1, \dots, V_L$  ($L$ depends on $r$) for which:
\begin{itemize}
\item For every $l=1, \dots, L$, there is a bi-Lipschitz homeomorphism $\Phi_l: V_l \longrightarrow \overline{B(0, r/2)} \subseteq \R^m$ with bi-Lipschitz constant less than $18$. 
\end{itemize}
Such partition can be constructed using a covering of $\M$ with balls of radius $r$. The centers of the balls can then be used to construct a Voronoi tessellation of $\M$, inducing in this way the sets $V_i$. This construction is presented in detail in Propositions 2 and 3 in \cite{trillos2018spectral}. Since the regions $V_i$ are bi-Lipschitz homeomorphic to $B(0, r/2)$, we can also construct a bi-Lipschitz homeomorphism 
\[\Phi_l: V_l \longrightarrow [0,r]^m  \]
with bi-Lipschitz constant less than $C$ (which only depends on $m$).

Let $\widetilde \delta < \frac{r}{4}$ and let us consider a partition of $[0,1]^m$ into a collection of ($m$-dimensional) rectangles $\widetilde{Q}$ with diameter less than $\widetilde \delta$ and with aspect ratio no larger than $2$. We construct subsets $Q$ of $\M$ by letting
\[ Q :=  \Phi_l^{-1}( \widetilde Q )   \]
for some $l=1, \dots, L$ and some $\widetilde Q$. From the properties of the maps $\Phi_l$ it follows that the collection of cells $Q$ is a partition of $\M$ (the different cells are disjoint up to sets of $\vol_\M$ measure zero) and satisfy:
\begin{enumerate}
	\item $\diam(Q) \leq C \widetilde \delta$.
	\item $ \widetilde \delta^m \leq C \vol_\M(Q) \leq C \mu \left(Q \right) $.
\end{enumerate}
We can then let $\widetilde \rho_n : \M \rightarrow $ be the piecewise constant density defined by
\[ \widetilde \rho_n(x) = \frac{\mu_n(Q)}{\vol_\M(Q)}, \quad x \in Q.\]
It is clear that $\widetilde \rho_n$ is indeed a density and moreover, for every $x \in Q$ we have 
\[  | \rho(x) - \widetilde \rho_n(x) | \leq  \left |  \frac{\mu_n(Q) - \mu(Q)}{\vol_\M(Q)}  \right| +  \frac{1}{\vol_\M(Q)} \left | \int_{Q}(\rho(y) - \rho(x)) d \vol_\M(y)  \right|     \]
In particular, 
\[ \lVert \rho - \widetilde \rho_n
 \rVert_{L^\infty(\mu)} \leq  \frac{C}{\widetilde \delta^m} \sup_{Q} | \mu_n(Q) - \mu(Q)|  +C \Lip(\rho) \widetilde \delta .  \]

From Chernoff's bound one can get for all $\theta$ small enough, 
 \begin{align*}
   \Prob \left(  \mu_n(Q)  \geq  (1+\theta)\mu(Q)  \right) & \leq \exp \left(  -  \frac{n\theta^2\mu(Q)}{3}     \right)
   \\ &\leq \exp \left(  -  \frac{Cn\theta^2\widetilde \delta^m}{3}     \right)
 \end{align*}
 where in the last line we have used the fact that
 \[ \rho_{min} \vol_\M(Q) \leq \mu(Q)   \]
 and that
 \[ C\widetilde \delta^m \leq \vol_\M(Q). \]
 Likewise we can obtain
 \begin{align*}
 \Prob \left(  \mu_n(Q)  \leq  (1-\theta)\mu(Q)  \right) & \leq \exp \left(  -  \frac{Cn\theta^2\widetilde \delta^m}{3}     \right),
 \end{align*}
 and so
 \[  \Prob \left(  |\mu_n(Q) - \mu(Q) |  \geq  \theta\mu(Q)  \right) \leq  2\exp \left(  -  \frac{Cn\theta^2\widetilde \delta^m}{3}     \right).   \]
Taking a union bound, we deduce that
\[ \Prob \left(  | \mu(Q) - \mu_n(Q) | \geq \theta \vol_\M(Q) \quad \forall Q \right) \leq  \frac{L}{\theta^m} \exp \left( -C n\theta^2 \widetilde \delta^m          \right). \]
and as long as $\theta\geq \frac{1}{n^{1/m}}$ we can replace $L/\theta^m$ above with $n$.
	
\end{proof}

\bibliographystyle{abbrv}
\bibliography{ref}

\end{document}